\documentclass[11pt,twoside]{article}
\usepackage{bbm}
\usepackage{mathrsfs}
\usepackage{amssymb}
\usepackage{amsmath}
\usepackage{amsthm}
\usepackage{amsfonts}
\usepackage{color}
\usepackage{booktabs}
\usepackage{graphicx}
\usepackage[active]{srcltx}
\usepackage{CJK}
\usepackage{geometry}
\geometry{left=1.5cm,right=1.5cm,top=2cm,bottom=1cm}
\usepackage[cp1252]{inputenc}

\usepackage{mathrsfs}
\usepackage{graphicx}

\usepackage[active]{srcltx}

\usepackage{authblk}

\textwidth 169truemm
\textheight 226truemm
\oddsidemargin -1.0mm
\evensidemargin -1.0mm
\topmargin -10mm
\headsep 6mm
\footskip 11mm
\baselineskip 4.5mm

\allowdisplaybreaks


\usepackage{titletoc}
\titlecontents{section}[0pt]{\addvspace{2pt}\filright}
              {\contentspush{\thecontentslabel\ }}
              {}{\titlerule*[8pt]{.}\contentspage}

\pagestyle{myheadings}\markboth{  J. Liu, D. Yan, Y. Zhou}  
{Existence of hyperbolic motions} 
\setlength{\topmargin}{-0.7cm}
\setlength{\headsep}{0.3cm}
\setlength{\footskip}{0.3cm}

\def\rr{{\mathbb R}}
\def\rn{{{\rr}^n}}

\def\nn{{\mathbb N}}

\def\bs{{\mathbb S}}

\def\fz{\infty}
\def\az{\alpha}

\def\lz{\lambda}
\def\dz{\delta}

\def\ez{\epsilon}

\def\kz{\kappa}
\def\bz{\beta}

\def\gz{{\gamma}}

\def\vz{\varphi}

\def\sz{\sigma}

\def\wz{\widetilde}

\def\nb{\nabla}

\def\bint{{\ifinner\rlap{\bf\kern.35em--}
\int\else\rlap{\bf\kern.45em--}\int\fi}\ignorespaces}

\def\bbint{{\ifinner\rlap{\bf\kern.35em--}
\hspace{0.078cm}\int\else\rlap{\bf\kern.45em--}\int\fi}\ignorespaces}

\def\la{\langle}
\def\ra{\rangle}

\newtheorem{thm}{Theorem}[section]
\newtheorem{lem}[thm]{Lemma}
\newtheorem{rem}[thm]{Remark}
\newtheorem{cor}[thm]{Corollary}
\numberwithin{equation}{section}

\begin{document}

\arraycolsep=1pt

\title{\Large\bf Existence of hyperbolic motions to a class of Hamiltonians and generalized $N$-body system via a geometric approach
 \footnotetext{\hspace{-0.35cm}
\endgraf
 2020 {\it Mathematics Subject Classification:} Primary 70F10 $\cdot$ 70H20; Secondary 37J39 $\cdot$ 37J51 $\cdot$ 53C22
 \endgraf {\it Key words and phrases:}  Hyperbolic motion,  $N$-body problem, Hamiltonian, Ma\~{n}\'{e}'s potential.
\endgraf The first author is supported by the Academy of Finland via the projects: Quantitative rectifiability in Euclidean and non-Euclidean spaces, Grant No. 352649, and Singular integrals, harmonic functions, and boundary regularity in Heisenberg groups,
Grant No. 328846.
 The second author is partially funded by NSFC (No. 11871086).
The third author is supported by NSFC (No. 11871088 \& No.12025102) and by the Fundamental Research Funds for the Central Universities.
\endgraf }
}

 \author{Jiayin Liu, Duokui Yan and Yuan Zhou}




\maketitle

\begin{center}
\begin{minipage}{13.5cm}\small
{\noindent{\bf Abstract.}\quad  For the classical $N$-body problem in $\rr^d$ with $d\ge2$,  Maderna-Venturelli in their remarkable paper [Ann.
Math.  2020] proved the existence of hyperbolic motions with any   positive energy constant, starting from any configuration and along any non-collision configuration. Their original proof relies on the long time behavior of solutions by Chazy 1922 and Marchal-Saari 1976, on the H\"{o}lder estimate for  Ma\~{n}\'{e}'s potential by Maderna 2012,  and on the weak KAM theory.

 \quad\quad
We give a new and completely different proof for the above existence of hyperbolic motions. The central idea is that, via some geometric observation, we build up uniform estimates for Euclidean length and angle of geodesics of Ma\~{n}\'{e}'s potential starting from a given configuration and ending at the ray along a given non-collision configuration. 

 \quad\quad
 Moreover, our geometric approach works for Hamiltonians $\frac12\|p\|^2-F(x)$, where $F(x)\ge 0$  is  lower semicontinuous and decreases very slowly to $0$  faraway from collisions. We therefore obtain the existence of hyperbolic motions to such Hamiltonians with any positive energy constant, starting from any admissible configuration and along any non-collision configuration. Consequently,  for several important potentials $F\in C^{2}(\Omega)$,  we get similar existence of hyperbolic motions to the generalized $N$-body system $\ddot{x} = \nb_x F(x)$, which is an extension of Maderna-Venturelli [Ann. Math.  2020].}

\end{minipage}
\end{center}

\tableofcontents
\contentsline{section}{\numberline{ } References}{45}
\medskip

\section{Introduction}\label{s1}

The classical $N$-body problem in $\rr^d$ with $d\ge 2$ {\color{black}consists in studying} the second order ordinary system
\begin{equation}\label{eq1}
  \ddot{x} = \nb_x U(x),
\end{equation}
where  $x=(x_1,x_2,\cdots, x_N) \in \rr^{d N}$, each  body $x_i$ has mass $m_i>0$,
 $U(x)$ denotes the Newtonian potential, and $\nb_x U=(\nabla_{x_i} U)_{1\le i\le N}$ with
$\nabla_{x_i}=\frac1{m_i}\frac{\partial}{\partial x_i}$  being the gradient with respect to the mass scalar product.
  In other words,
$$
U(x) = \sum_{1\le i<j \le N}\frac{m_i m_j}{|x_i-x_j|}, \quad \mbox{and} \quad \nabla_{x_i} U(x)=\sum_{j=1,j\ne i}^{N}\frac{m_j}{|x_j-x_i|^3}(x_j-x_i).$$
The Hamiltonian corresponding to the system \eqref{eq1} is $\frac12\|p\|^2-U(x)$, where the weighted norm {\color{black}$\|p\|$ is defined by} $\|p\|: =(\sum_{i=1}^Nm_i|p_i|^2)^{1/2}$
for $p= (p_1, \dots, p_N) \in \rr^{dN}$.
Below, denote by $\Omega$ the set of non-collision configurations and by $\Sigma$ the set of configurations including some collision, that is,
$$\Omega:= \{x \in \rr^{dN} : x_i \ne  x_j , \, \forall \, 1\le i < j \le N  \}\quad \mbox{and $\Sigma:=\rr^{dN}\setminus \Omega$}.  $$
It is well known that $\Omega$ is a path connected open set (that is, a domain) in $\rr^{dN}$.

The long-time behavior of solutions to the system \eqref{eq1}  has been insistently concerned after the fundamental works by Chazy in 1918; see \cite{c18, c22,ms76, gs17,mv20,dmmy,bm20, fkm21} and the references therein. Indeed, Chazy classified different asymptotic types of solutions for the three-body problem, and moreover, established the continuity of the limit shape; see \cite{c18,c22}.
Later,  these {\color{black} results} were  generalized to the $N$-body problem for any $N\ge4$; see \cite{pol67, pol76, posa70, sa71, wa91, gs17}. {\color{black} Following}  Chazy \cite{c22}, if  $x:[0,\fz)\to\rr^{dN}$  is a solution to \eqref{eq1}  and satisfies $x(t) = ta + o(t)$ as $t \to +\fz$ for some non-collision configuration $a \in \Omega$, we call it a \textit{hyperbolic motion} to \eqref{eq1}  along $a$. The following was proved in \cite{c18,c22,gs17}.

\begin{thm} \label{thm02}
Let   $x:[0,\fz)\to\rr^{dN}$ be a  hyperbolic motion to \eqref{eq1}  along  $a\in\Omega$.
Then we have the following:
  \begin{enumerate}
     \vspace{-0.3cm}   \item[(i)] The solution $x$ has asymptotic velocity $a$, that is, $  \displaystyle \lim_{t\to \fz} \dot{x}(t)=a;$
   \vspace{-0.3cm} \item[(ii)]
Given any $\ez > 0$, there are constants
$t^\ast > 0$ and $\dz > 0$ such that, for any maximal solution $y : [0, T ) \to \Omega$
satisfying $| y(0) - x(0) | < \dz$ and $| \dot y(0) - \dot x(0) | < \dz$, we have

  (ii)$_1$ $T=+\fz$, $| y(t) - ta | < t\ez$ for all $t>t^\ast$ and

  (ii)$_2$ there is $b \in \Omega$ with $|b-a| < \ez$ such that $ y= tb+ o(t).$
  \end{enumerate}
\end{thm}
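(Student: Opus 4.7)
The plan is to derive part (i) from conservation of energy together with the integrability in time of the gravitational acceleration along the trajectory, and then to derive part (ii) by a continuous-dependence argument that propagates smallness forward into a forward-invariant ``hyperbolic cone'' in phase space.

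For (i), I would first exploit the assumption $x(t)=ta+o(t)$ and $a\in\Oz$: since $a_i\ne a_j$ for $i\ne j$, we get $|x_i(t)-x_j(t)|\ge ct$ for some $c>0$ and all large $t$. Hence $U(x(t))=O(1/t)$ and $\|\nb U(x(t))\|=O(1/t^2)$. Conservation of the energy $H=\tfrac12\|\dot x(t)\|^2-U(x(t))$ then forces $\|\dot x(t)\|^2\to 2H$, so $\|\dot x\|$ has a limit. To upgrade this norm convergence to vector convergence, I would integrate the equation $\ddot x=\nb U(x)$ from $t$ to $\fz$: the $O(s^{-2})$ tail is integrable, so $\dot x(t)$ converges to some $v\in\rr^{dN}$. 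Writing $x(t)/t=\frac1t\int_0^t\dot x(s)\,ds$ and using $\dot x(s)\to v$ gives $x(t)/t\to v$, while the hypothesis gives $x(t)/t\to a$. Therefore $v=a$, and along the way $2H=\|a\|^2>0$.

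For (ii), the strategy is to reduce to a stability result at a large but fixed time $t_0$ and then to build a forward-invariant cone. Since (i) gives $\dot x(t_0)\to a$ and $x(t_0)/t_0\to a$ as $t_0\to\fz$, one may pick $t_0$ so large that $x(t_0)$ is far from $\Sigma$, $\dot x(t_0)$ is arbitrarily close to $a$, and $\|\nb U(x(s))\|=O(s^{-2})$ is small on $[t_0,\fz)$. Standard continuous dependence of the ODE \eqref{eq1} on a compact time interval gives that, if $|y(0)-x(0)|+|\dot y(0)-\dot x(0)|<\dz$ with $\dz$ small enough, then $y$ is defined on $[0,t_0]$ and $(y(t_0),\dot y(t_0))$ is as close as we want to $(x(t_0),\dot x(t_0))$. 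The problem is thus reduced to showing: if $(y(t_0),\dot y(t_0))$ lies in a small phase-space neighbourhood of $(t_0 a,a)$, then $y$ extends to $[t_0,\fz)$, remains collision-free, satisfies $|y(t)-tb|<t\ez$ for large $t$, and admits a hyperbolic asymptote $b\in\Oz$ with $|b-a|<\ez$.

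To prove this reduction, I would set $z(t):=y(t)-ta$, view $z$ as the solution of a non-autonomous perturbation of the free flow, and run a bootstrap/Gronwall argument on $[t_0,\fz)$: as long as $y$ stays in the cone $\|y(t)-ta\|\le \tfrac12 ct$, the mutual distances $|y_i(t)-y_j(t)|$ still grow linearly in $t$, so $\|\nb U(y(t))\|=O(t^{-2})$, which is integrable; this in turn controls $\dot z$ and hence $z$, closing the bootstrap. Inside this cone one then repeats the argument of (i) for $y$: $\dot y(t)$ converges to some $b\in\rr^{dN}$ and $y(t)/t\to b$. The uniform closeness of $b$ to $a$ follows from the uniform closeness of $\dot y(t_0)$ to $a$ together with the integrated bound on $\nb U(y)$. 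Non-collision of $b$ (that is, $b\in\Oz$) follows once $|b-a|$ is smaller than the minimal gap of $a$, and $|y(t)-tb|<t\ez$ follows by integrating $\dot y(t)\to b$.

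The main obstacle is the construction of the forward-invariant cone with parameters that are uniform in the perturbation: one must pick the cone aperture, the threshold time $t_0$, and the initial smallness $\dz$ simultaneously so that the bootstrap closes and so that the asymptotic shape $b$ depends continuously on the initial data $(y(0),\dot y(0))$. The rest of the argument is then just energy conservation plus an $O(t^{-2})$ tail estimate, exactly as in the proof of (i).
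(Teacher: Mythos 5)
The paper does not actually prove Theorem 1.1; it states the result as known and writes ``The following was proved in \cite{c18,c22,gs17}'', attributing it to Chazy and to Gingold--Solomon. So there is no in-paper proof to compare against, and I will evaluate your sketch on its own.

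Your plan is the standard one and is sound in outline. For (i), the hypothesis $x(t)=ta+o(t)$ with $a\in\Omega$ gives $|x_i(t)-x_j(t)|\ge c\,t$ for large $t$, hence $\|\nabla U(x(t))\|=O(t^{-2})$; integrating $\ddot x=\nabla U(x)$ then shows $\dot x(t)$ is Cauchy, so it has a limit $v$, and averaging ($x(t)/t=\frac1t\int_0^t\dot x$) forces $v=a$. The energy step is not strictly needed for (i); the integrability of the acceleration does all the work, with energy conservation giving only the consistency identity $2H=\|a\|^2$. For (ii), the reduction to a large fixed time $t_0$ via continuous dependence, followed by a forward-invariant cone bootstrap (inside the cone $\|y(t)-ta\|\le\tfrac12ct$ mutual distances grow linearly, so $\|\nabla U(y(t))\|=O(t^{-2})$ is integrable and the deviation from the free flow stays small), is exactly the right mechanism, and then one reruns the argument of (i) for $y$ to produce $b$ with $|b-a|<\varepsilon$, and $b\in\Omega$ once $|b-a|$ is below half the minimal gap of $a$. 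Two small remarks. First, part (ii)$_2$ does not ask that $b$ depend continuously on the initial data, only that such a $b\in\Omega$ with $|b-a|<\varepsilon$ exist; so that part of the ``main obstacle'' you flag is not actually required. Second, the parameters need not be chosen simultaneously: fix the cone aperture $c$ well below $a^\flat$, then take $t_0$ large enough that $\int_{t_0}^\infty\|\nabla U\|\,ds$ is small inside the cone and $(x(t_0)/t_0,\dot x(t_0))$ is close to $(a,a)$, and only then choose $\delta$ by continuous dependence on the compact interval $[0,t_0]$; the bootstrap then closes. With these minor adjustments, the sketch is a correct proof plan matching the standard Chazy-type argument.
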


In 1976, Marchal-Saari  provided the following classification of long-time behavior for the $N$-body problem; see Theorem 1 and Section 10 in \cite{ms76}.
\begin{thm}\label{thm01}
   Let $x : [0, +\fz) \to \rr^{dN}$ be a   solution to \eqref{eq1}.  Then, as $t \to +\fz$, either
    \begin{equation}\label{suhy}
      \frac{R(t)}{t} \to +\fz,  \, \mbox{where $R(t): = \max\{ |x_i(t)-x_j(t)|:1\le i<j\le N\}$,}
    \end{equation}
    or
    \begin{equation}\label{hy1}
\mbox{ there exists $A \in \rr^{dN}$ such that} \      x(t)= At+ o(t).
    \end{equation}
\end{thm}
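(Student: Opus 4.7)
The plan is to use the Lagrange--Jacobi identity to obtain a dichotomy on the growth of the moment of inertia, and then in the subquadratic case to extract componentwise asymptotic velocities from a vanishing potential. After reducing to the center-of-mass frame (possible since linear momentum is conserved and both alternatives \eqref{suhy}--\eqref{hy1} are invariant under subtracting a linear motion), set $I(t):=\tfrac12\|x(t)\|^2$. Since $U$ is positively homogeneous of degree $-1$, Euler's identity gives $\la x,\nb_x U(x)\ra=-U(x)$; combined with the equation of motion and conservation of total energy $H=\tfrac12\|\dot x\|^2-U(x)$, this yields the Lagrange--Jacobi identity
$$\ddot I(t)\,=\,\|\dot x(t)\|^2+\la x(t),\ddot x(t)\ra\,=\,\|\dot x(t)\|^2-U(x(t))\,=\,2H+U(x(t))\,\ge\,2H.$$
Consequently $\dot I(t)-2Ht$ is non-decreasing, and the limit $L:=\dlim_{t\to\fz}I(t)/t^2$ exists in $[\max(H,0),+\fz]$.

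I would then split into two cases on $L$. Because $R(t)^2\ls I(t)$ (with an implicit constant from the masses), the case $L=+\fz$ forces $R(t)/t\to+\fz$ and gives \eqref{suhy}. If $L<+\fz$, then $R(t)=O(t)$, and integrating $\ddot I=2H+U$ twice and comparing with $I(t)/t^2\to L$ yields the averaged bound $\tfrac1t\int_1^t U(x(s))\,ds\to 2(L-H)\ge 0$. Combined with $U\ge 0$ and the monotonicity of $\dot I-2Ht$, a Tauberian/pigeonhole argument upgrades this to $U(x(t))\to 0$ and $L=H$. In particular each pairwise distance $r_{ij}(t)\to\fz$.

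The final step, which is the real crux, is to promote $U(x(t))\to 0$ to the assertion that each $\dot x_i(t)$ converges to some $A_i$; once this is in hand, $x_i(t)=A_it+o(t)$ follows by integration of $\dot x_i$ and Cesaro-type estimates. For this one wants $\int_1^\fz\|\nb_x U(x(s))\|\,ds<+\fz$, but a priori $U(x(t))$ might decay only like $1/t$, which is non-integrable; this is the main obstacle. The resolution is to exploit vectorial cancellation by clustering: group the bodies according to whether $r_{ij}(t)/t$ tends to a positive limit, apply the Lagrange--Jacobi identity relative to each cluster (using that the center of mass of each cluster moves asymptotically linearly) to deduce $r_{ij}(t)/t\to c_{ij}\ge 0$, and then show that inter-cluster forces decay like $1/t^2$ (hence are summable) while intra-cluster forces satisfy a sharper estimate coming from the bounded relative energy inside each cluster. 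Iterating this clustering/bootstrap on subclusters is precisely the subtle part of Saari's analysis in \cite{ms76}, and is where the bulk of the technical work lies.
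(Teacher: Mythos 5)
The paper does not prove this theorem; it cites it directly from Marchal--Saari \cite{ms76} (Theorem~1 and Section~10 there), and the whole point of the article is to give a proof of Theorem~\ref{thm0} that does \emph{not} rely on Theorem~\ref{thm01}. So there is no internal proof to compare against, and I will instead assess your sketch on its own terms.

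Your global strategy (center the mass, apply the Lagrange--Jacobi identity $\ddot I = 2H + U$ to obtain a dichotomy on the growth of $I$, then pass to a clustering analysis) is indeed the skeleton of the Marchal--Saari/Pollard argument, so the outline points in the right direction. Two steps, however, contain genuine gaps. First, the assertion that $L=\lim_{t\to\infty} I(t)/t^2$ exists in $[\max(H,0),+\infty]$ is presented as a direct consequence of the monotonicity of $\dot I(t)-2Ht$; this does not follow. Writing $g(t)=\dot I(t)-2Ht$, one only gets that $g$ is non-decreasing, and $\frac{1}{t^2}\int_0^t g$ need not converge for a general non-decreasing $g\to\infty$. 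The existence of this limit for the $N$-body flow is a separate, nontrivial result (due to Pollard, cf.~\cite{pol67,posa70}), which must be proved or cited rather than asserted.

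Second, and more seriously, the claim that $L<\infty$ implies $U(x(t))\to 0$, $L=H$, and $r_{ij}(t)\to\infty$ for every pair $i<j$ is false. A bound subsystem that escapes as a whole---say bodies $1,2$ in a circular binary of bounded separation, whose center of mass runs off linearly together with the remaining bodies---still satisfies $x(t)=At+o(t)$ with $A_1=A_2$, yet $|x_1(t)-x_2(t)|$ stays bounded, $U(x(t))$ does not tend to $0$ (its Cesaro mean is strictly positive), and $L=H+\frac12\overline U>H$ where $\overline U>0$ is that mean. Thus there is no Tauberian argument of the kind you invoke, and ``each $r_{ij}(t)\to\infty$'' is precisely the conclusion that fails. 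The clustering decomposition you describe in the final paragraph is therefore not a supplementary refinement used to ``promote $U\to 0$'' to convergence of velocities; it must \emph{replace} that middle step. The correct picture is that the bodies split into clusters, only the inter-cluster part of $U$ decays, the intra-cluster part can stay bounded away from zero, and $A_i=A_j$ exactly when $i,j$ lie in the same cluster. As you note, making this rigorous (in particular controlling the intra-cluster dynamics and obtaining summability of inter-cluster forces) is the technical heart of \cite{ms76}, and the sketch as written does not yet carry it out.
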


Note that if $x$ satisfies \eqref{hy1} with $A \in \Omega$, then $x(t)$ is exactly a hyperbolic motion. In general, \eqref{hy1} may not be true;  and even {\color{black} when} it is true,
{\color{black} the condition} $A \in \Omega$ does not necessarily hold. Thus Theorem \ref{thm01} does not affirm the existence of hyperbolic motions.

In 2020, Maderna-Venturelli showed the existence of hyperbolic motions   to \eqref{eq1}  starting from any configuration and along any given non-collision configuration; see  Theorem 1.1 in \cite{mv20}.
\begin{thm} \label{thm0}
 Given any initial configuration $x_0 \in \rr^{dN}$,  any non-collision configuration $a \in \Omega$ with $\|a\| =1$, and  any choice of the energy constant $\lz > 0$, there always exists a hyperbolic motion
$x:[0,\fz)\to\rr^{dN}$ to \eqref{eq1} starting from   $x_0$  and along $\sqrt{2\lz}a$, that is,
$$\mbox{$ x(0)=x_0$ and }\ x(t) = \sqrt{2\lz} a t + o(t) \mbox{ as } t \to +\fz.$$
\end{thm}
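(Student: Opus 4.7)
The plan is to construct the hyperbolic motion as a subsequential limit, as $T\to+\fz$, of minimizing curves for Ma\~{n}\'{e}'s action potential at energy $\lz$ connecting the fixed point $x_0$ to the points $y_T := T\sqrt{2\lz}\,a$ on the ray. Recall that at energy $\lz$ Ma\~{n}\'{e}'s potential is
\begin{equation*}
\phi_\lz(x,y) := \inf\left\{ \int_0^\tau \left[\tfrac12 \|\dot\gz(t)\|^2 + U(\gz(t)) + \lz\right] dt : \tau>0,\ \gz\in \mathrm{AC}([0,\tau];\rr^{dN}),\ \gz(0)=x,\ \gz(\tau)=y\right\}.
\end{equation*}
A standard Tonelli-type argument together with Marchal-type interior collision avoidance produces, for each sufficiently large $T$, a minimizing curve $\gz_T:[0,\tau_T]\to \rr^{dN}$ with $\gz_T(0)=x_0$ and $\gz_T(\tau_T)=y_T$. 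After reparametrization by physical time, $\gz_T$ is collision-free on the interior, solves $\ddot\gz_T = \nb_x U(\gz_T)$, and has total energy $\lz$, so in particular $\|\dot\gz_T\|^2 = 2(\lz + U(\gz_T))\ge 2\lz$.

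The heart of the proof is then a pair of uniform geometric estimates on these minimizers. First, a \emph{length estimate}: there are constants independent of $T$ quantifying the rate at which $|\gz_T(t)-x_0|$ tracks $\sqrt{2\lz}\,t$. Second, an \emph{angle estimate}: the Euclidean unit direction $(\gz_T(t)-x_0)/|\gz_T(t)-x_0|$ lies in a narrow cone about $a$, with opening angle tending to zero as $|\gz_T(t)-x_0|$ increases, uniformly in $T$. Both estimates are obtained by purely variational comparison: if $\gz_T$ were to stray too far from the segment $[x_0,y_T]$ in either length or angle, then splicing in a near-straight competitor segment in the region where $U$ is small (the relevant region, because $U\ge 0$ and $U(x)\to 0$ along non-collision directions at infinity) would yield a curve with strictly smaller action and the same endpoints, contradicting minimality of $\gz_T$.

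Given the length and angle bounds, $\tau_T\to+\fz$, and on each compact subinterval $[0,S]$ the family $\{\gz_T\}$ is equicontinuous while $\{\dot\gz_T\}$ is uniformly bounded by energy conservation (the angle/length estimates confine each $\gz_T|_{[0,S]}$ to a fixed compact set on which $U$ stays locally bounded). A diagonal Arzel\`a--Ascoli extraction then yields a subsequence converging in $C^1_{\mathrm{loc}}([0,+\fz);\rr^{dN})$ to a curve $x$ with $x(0)=x_0$, and the interior collision-avoidance passes to the limit so that $x$ is collision-free for $t>0$ and solves $\ddot x=\nb_x U(x)$ with energy $\lz$. The length and angle estimates, inherited by $x$, then upgrade to $x(t)=\sqrt{2\lz}\,a\,t+o(t)$ as $t\to+\fz$, giving the required hyperbolic motion.

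The principal obstacle is the \emph{quantitative} angle estimate, uniform in $T$. Without it one loses control of the limit direction, which a priori could be some non-collision vector different from $a$. Qualitatively this is natural — a minimizer whose endpoint lies very far along a fixed direction should not take a long angular detour — but making it quantitative without appealing to weak KAM theory or to Maderna's H\"older regularity of $\phi_\lz$ requires constructing explicit splicing competitors at well-chosen scales, carefully balancing the action saved by straightening against the $U$-cost of the splice, and essentially using the positivity of $\lz$ (which guarantees a definite speed at large scales). This is precisely where the geometric approach of the paper substitutes for the dynamical/weak KAM input of Maderna--Venturelli, and it is the technical core on which Steps 1--3 rely.
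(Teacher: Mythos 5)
Your outline matches the paper's strategy at the level of section headings: minimize Ma\~{n}\'{e}'s action to far points along the prescribed direction, extract uniform length and angle control, and pass to a limit by Arzel\`a--Ascoli. However, the proposal does not actually establish the uniform angle estimate --- you flag it yourself as ``the principal obstacle'' and describe what \emph{would} be required rather than proving it --- and the mechanism you propose for it is not the one the paper uses, and it has a circularity problem as stated.

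Concretely, the paper proves the angle estimate without any local splicing. It first replaces the base point by $x^\ast := x_0 + 50(1+|x_0|)a/a^\flat$, chosen so that every point $x^\ast + tb$ with $t\ge 0$ and $b\in\bs^{dN-1}$, $|b-a|\le a^\flat/20$, satisfies $(x^\ast + tb)^\flat \ge 2$; i.e.\ the whole cone of rays from $x^\ast$ near the direction $a$ is uniformly separated from $\Sigma$ (Lemmas 3.2--3.4). Geodesics are taken from $x_0$ to $x^\ast + 2^n a$, not to $T\sqrt{2\lz}\,a$: your choice is problematic because the straight segment $[x_0, T\sqrt{2\lz}\,a]$ can pass arbitrarily close to $\Sigma$ at small parameter values, so a single straight-line competitor from $x_0$ gives no usable bound. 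With the shifted base, a single comparison with the broken line $x_0\to x^\ast\to x^\ast+Tb$ yields $l(\gz)\le T+\Psi(T)$ (Lemma 4.1). The angle control is then extracted by pure Euclidean geometry: any curve from $x_0$ to $x^\ast+Sb$ of length at most $S+\Psi(S)$ must pass within $O(\sqrt{S\Psi(S)})$ of the midpoint $x^\ast+\tfrac{S}{2}b$ (Lemma 5.1), and an induction over dyadic scales propagates this to $\bigl|\frac{\gz(t)-x^\ast}{|\gz(t)-x^\ast|}-a\bigr|\le 16\,\wz\Psi(|\gz(t)-x^\ast|)$ down to a fixed scale $2^{n_0}$ (Lemma 6.1). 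No competitor is ever spliced into the geodesic.

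Your proposed mechanism --- splice a near-straight competitor between two points on $\gz_T$ ``in the region where $U$ is small'' --- is circular as written: to know $U$ is small along the splice you need to know the splice stays far from $\Sigma$, which is precisely the positional/angular control you are trying to prove. The paper's ordering of steps (one global competitor that never touches the geodesic, then purely metric geometry, then induction) together with the $x^\ast$ shift is what breaks this circularity. Until you supply a concrete replacement for Lemma 5.1 and the dyadic induction of Lemma 6.1, the proposal does not constitute a proof of the statement.
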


The original  proof of  Theorem \ref{thm0} by Maderna-Venturelli     relies   on Theorems \ref{thm02} and \ref{thm01}, and also uses the weak KAM theory and H\"{o}lder estimate for  Ma\~{n}\'{e}'s potential. Indeed,
starting from given $x_0$ and along given $a$ as in Theorem \ref{thm0},
Maderna-Venturelli \cite{mv20}
constructed unbounded calibrating curves of viscosity solutions to Hamilton-Jacobi equation involving the Newtonian potential,
where the possibility of bounded calibrating curves was ruled out by exploiting a classical result by Von Zeipel \cite{z08}(1908). Next, thanks to Marchal-Saari's classification result in Theorem \ref{thm01},
such unbounded calibrating curves satisfy either \eqref{suhy} or \eqref{hy1}. Via some H\"{o}lder estimate for Ma\~{n}\'{e}'s potential $m_\lz$ by Maderna \cite{m12} (see also \cite[Theorem 2.11]{mv20}), the possibility of \eqref{suhy} is  excluded.
Thus such unbounded calibrating curves must satisfy \eqref{hy1} for some $A\in \rr^{dN}$.
Finally, by using the asymptotic behavior in Theorem \ref{thm02} by Chazy \cite{c22} and Gingold-Solomon \cite{gs17},  they showed that $A$ must be $\sqrt{2\lz} a$ as desired.

In this paper, we  give a new and completely different proof for Theorem \ref{thm0}, which is based on some geometric observation for geodesics of Ma\~{n}\'{e}'s potential. 
The central point is that: we consider a solution  $\gz^{(n)} :[0,\sz^{(n)} ]\to \rr^{dN}$  to the system \eqref{eq1} joining  $x_0$ and   $x_0+s^\ast a+ 2^na$ for all sufficiently large $n$, where $s^\ast$ is a constant depending on $x_0$ and $a$; indeed, $\gz^{(n)}$ is a geodesic of Ma\~{n}\'{e}'s potential $m_\lz$. Via some geometric observation  and  an  induction argument, we are able to bound, uniformly in $n$,  the Euclidean length of  $\gz^{(n)} |_{[0,t]}$ and also the angle between $\gz^{(n)} (t)-x_0-s^\ast a$ and $a$ whenever $t\in[\sz _0,\sz ^{(n)}]$ for some  constant $\sz_0$ depending on $x_0,\lz$ and $a$. The desired hyperbolic motion is then given by  the limit of $ \gz^{(n)}$ as $n\to\fz$ (up to some subsequence). See Section 2 for more detail.

Moreover, our geometric approach works for Hamiltonians $\frac12\|p\|^2-F(x)$, where $F(x)\ge0$ is lower semicontinuous and decreases very slowly to $0$  faraway from collisions.
We therefore obtain the existence of hyperbolic motions to such Hamiltonian with any positive energy constant, starting from any admissible configuration, and along any non-collision configuration. Consequently, we get a similar existence of hyperbolic motions to the generalized $N$-body problem $\ddot{x} = \nb_x F(x)$ for several important potentials $F\in C^{2}(\Omega)$, which  is an extension of Maderna-Venturelli [Ann. Math.  2020]. See Theorem 1.4  and Corollary 1.5 below.

To be precise, we  first recall several necessary notions and facts for the Hamiltonian $\frac12\|p\|^2-F(x)$.
In this paper we always assume
\begin{equation}\label{F}
\left\{\begin{split}
& F(x):= {\color{black} \sum _{1\le i<j\le N}} F_{ij}(x_i,x_j)\quad \mbox{$\forall\ x=(x_1,\cdots, x_N)\in\rr^{dN}$}, \mbox{ where }\\
&\mbox{$F_{ij}:\rr^d\times\rr^d\setminus \Delta \to [0,\fz)$ is locally bounded, and is also lower semicontinuous,  that is, }\\
&\mbox{for any $\dz\in\rr$ the level set
$\{(x_i,x_j)\in \rr^d\times \rr^d\setminus\Delta: F_{ij}(x_i,x_j)>\dz\}$ is open.}
\end{split}\right.
\end{equation}
Here and below we write $\Delta=\{(z,z):z\in\rr^d\}$.  In a natural way,  one may extend $F_{ij}$ to be a lower semicontinuous function  in  $\rr^{d}\times\rr^d$  with values in $[0,\fz]$, and hence extend $F$ to be a   lower semicontinuous function in $\rr^{dN}$  with values in $[0,\fz]$. See Appendix for details.

For  any energy constant $\lz> 0$,   Ma\~{n}\'{e}'s potential  or the action potential  $m_\lz$ is defined by
  \begin{align} \label{mane2}
  m_{\lz}(x,y)   : = \inf \{ A_\lz(\gz) : \gz\in \mathcal {AC}  (x, y; [0,\sz] ; \rr^{dN})\,\, \mbox{for some $\sz>0$}\},\qquad \forall  x,y\in\rr^{dN},
\end{align}
where
$$A_\lz(\gz):=\int_0^\sz [\frac12|\dot\gz(s)|^2+F\circ\gz(s)+\lz] \, ds, $$
 and for any set $W\subset\rr^{dN}, $
 $$ \mathcal {AC}  (x, y; [0,\sz] ; W): = \{ \gz:[0,\sz] \to W \ \mbox{is absolutely continuous, and } \gz(0)=x, \gz(\sz)=y\}.$$

Since $\Omega$ is path-connected,  $(\Omega,m_\lz)$ {\color{black} is} a metric space. {\color{black} Denote by $\wz\Omega$ the set
\begin{equation}\label{wzoz}
  \{y \in \rr^{dN}: \text{there exists $\lz>0$ and $x \in \Omega$ such that $m_\lz(x,y)<+\fz$}\}.
\end{equation}
We remark that the set $\wz\Omega$ is independent of the choice of $\lz>0$ and $x\in \Omega$; see Appendix}. Configurations $x\in\wz\Omega$ are also {\color{black} said} to be   admissible.
It turns out that $(\wz\Omega,m_\lz)$ is a geodesic space, and  geodesics joining any pair of distinct configurations $x,y\in\wz\Omega$   are  given by the arc-length  parametrization of  minimizers of $A_\lz$ in the class
$\cup_{\sz>0}\mathcal {AC}  (x, y; [0,\sz] ; \wz\Omega)$; see Appendix for more details.
Below, to
 clarify the geometric nature, we call a minimizer  of $A_\lz$
as an \textit{$m_\lz$-geodesic with canonical parameter}. We also call a  continuous curve $\gz:[0,\fz)\to\wz\Omega$ as an \textit{$m_\lz$-geodesic ray  with canonical parameter}    if,
for any $\sz>0$, the restriction
$\gz|_{[0,\sz]}$ is an   $m_\lz$-geodesic  with  canonical parameter.
Moreover, in the spirit of Chazy,  if $\gz$ is an $m_\lz$-geodesic ray with canonical parameter for some $\lz>0$ and satisfies $x(t) = at + o(t)$ as $t \to +\fz$  for some non-collision configuration $a$, we  call $\gz$ as a \textit{hyperbolic motion  to the Hamiltonian  $\frac12\|p\|^2-F(x)$ with energy constant $\lz>0$  and along $a$,} or for simplicity, \textit{hyperbolic motion to $m_\lz$  along $a$}.

Next, assuming  in addition that  $ F_{ij}\in C^{2}( \rr^d\times \rr^d\setminus\Delta ;  \rr )$  for all $1\le i<j\le N$ (for simplicity we write $ F \in C^{2}( \Omega)$ below), we consider the generalized $N$-body system
\begin{equation}\label{eq4}
  \ddot{x} = \nb_x F(x),
\end{equation}
which corresponds to the Hamiltonian  $\frac12\|p\|^2-F(x)$.
Following Chazy, if  a solution $x:[0,\fz)\to\rr^{dN}$ to \eqref{eq4} satisfies $x(t) = at+ o(t)$ as $t \to +\fz$  for some non-collision configuration $a \in \Omega$, then we call it as a \textit{hyperbolic motion  to \eqref{eq4} along $a$.}
Recall that if $\gz:[0,\sz]\to\wz\Omega$ is an  $m_\lz$-geodesic with canonical parameter and if $\gz$ is collision free interiorly, that is, $\gz|_{(0,\sz)}\subset\Omega$,   then it is a solution to the system \eqref{eq4} with energy constant $\lz$,
see Lemma A.3 and Lemma A.4 in Appendix. Consequently, a hyperbolic motion to $m_\lz$ without collision in $(0,\fz)$ is a hyperbolic motion to \eqref{eq4}.

Below, we build up an existence {\color{black} result} of hyperbolic motions to the Hamiltonian $\frac12\|p\|^2-F(x)$, and hence to the corresponding N-body system $\ddot{x} = \nb_x F(x)$ when $F\in C^2(\Omega)$ additionally,
provided that $F$ satisfies the following growth assumption faraway from collisions:
\begin{equation}\label{sapa1}
\left\{\begin{split}
& F_{ij}(x_i, x_j) \le  f(|x_i-x_j|), \ \mbox{ $\forall (x_i,x_j)\in\rr^{d}\times\rr^d\setminus \Delta_{2}$    for all  $1\le i<j \le N$,}\\
&\mbox{ where   $f \in C^{0}(\rr_+,\rr_+)$  satisfies}\quad \sum_{k=1}^\fz\left( 2^{-k}\int_{2^k}^{2^{k+1}}f(s)\,ds\right)^{1/2}<\fz.
\end{split}\right.
\end{equation}
Here and below $\rr_+:=(0,\fz)$, and
  $\Delta_{\dz}:=\{(z,w)\in\rr^{d}\times \rr^d : |z-w|< \dz\}.$
 Note that, if $f$ is decreasing, then
  $$  \int_4^{\infty}\sqrt{f(s)}\frac{ds}s\le \sum_{k=2}^\fz \left( 2^{-k}\int_{2^k}^{2^{k+1}}f(s)\,ds\right)^{1/2} \le 2\int_2^{\infty}\sqrt{f(s)}\frac{ds}s.$$
Moreover, if
{\color{black} $ f(s)\le C(\log s)^{-\beta}$,}    $  \forall  s\ge 2$, for some  $\beta>2$, then
$$ \sum_{k=1}^\fz \left( 2^{-k}\int_{2^k}^{2^{k+1}}f(s)\,ds\right)^{1/2}\le {\color{black} \frac{C^{1/2}}{(\log 2)^{\bz/2}}}\sum_{k=1}^\fz k^{-\beta/2}<\fz.$$

\begin{thm}\label{mthm} Let $F$ be as in \eqref{F}  and {\color{black}satisfying } \eqref{sapa1}.
Given any energy constant $\lz>0$, any initial configuration $x  \in\wz\Omega$ and any non-collision configuration $a \in \Omega$ with $\| a \| = 1$, we have the following:
\begin{enumerate}
\item[(i)] There exists a hyperbolic motion $\gz : [0, +\fz) \to \wz\Omega$
 to   the Hamiltonian $\frac12\|p\|^2-F(x)$  with the energy constant $\lz>0$, along $\sqrt {2\lz}a$ and
starting from $x$, that is,
$$\mbox{ $\gz (0)=x  $ and
 $\gz(t) = \sqrt{2\lz} a t + o(t) $  as $t \to +\fz.$}$$

\item[(ii)]  There exists  $t_0=t_0(x,a,\lz)\ge 0$ such that the restriction
$\gz|_{[t_0,\fz)}$ is collision-free, that is,
  $\gz|_{[t_0,\fz)}\subset\Omega$.

\item[(iii)] If $ F \in C^{2}( \Omega)$,
  then the restriction  $\gz|_{[t_0,\fz)}$ is a hyperbolic motion to \eqref{eq4} along $\sqrt {2\lz}a$.

\item[(iv)] If  $ F \in C^{2}( \Omega)$,  and suppose in addition  $\gamma|_{(0,t_0)}$ is collision free,
 then   $\gz $ is a hyperbolic motion to \eqref{eq4}
starting from $x$ and along $\sqrt {2\lz}a$.
\end{enumerate}
\end{thm}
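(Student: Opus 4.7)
The plan is to implement the geometric strategy outlined in the introduction, building a hyperbolic motion as the limit of $m_\lz$-geodesics that target points further and further along the ray $x+\rr_+ a$.

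\textbf{Step 1 (Discrete approximants).} Since $a\in\Omega$, the ray $s\mapsto x+sa$ meets $\Sigma$ at only finitely many values of $s$; fix $s^\ast=s^\ast(x,a)>0$ larger than all of them, so that $x+s^\ast a+\rr_+ a\subset\Omega$. For each large integer $n$ set $y_n:=x+(s^\ast+2^n)\,a\in\Omega\subset\wz\Omega$. By the appendix, $(\wz\Omega,m_\lz)$ is a geodesic space, so there exists an $m_\lz$-geodesic $\gz^{(n)}:[0,\sz^{(n)}]\to\wz\Omega$ with canonical parameter joining $x$ and $y_n$. Energy conservation pins the canonical speed at $\sqrt{2(F(\gz^{(n)})+\lz)}$, which forces $\sz^{(n)}\sim 2^n/\sqrt{2\lz}$ as $n\to\fz$.

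\textbf{Step 2 (Uniform length and angle estimates).} The technical core is to produce constants $\sz_0=\sz_0(x,\lz,a)>0$ and $C=C(x,\lz,a)$, together with a decay function $\eta(t)\to 0$ as $t\to\fz$, such that for all large $n$ and every $t\in[\sz_0,\sz^{(n)}]$,
\begin{equation*}
\int_0^t |\dot\gz^{(n)}(s)|\,ds\le C\,t\qquad\text{and}\qquad \angle\bigl(\gz^{(n)}(t)-x-s^\ast a,\; a\bigr)\le \eta(t).
\end{equation*}
I would prove this by a dyadic induction over the scales $2^k$: one upper-bounds $A_\lz(\gz^{(n)})$ by the action of the straight-line competitor running from $x$ to $x+s^\ast a$ and then along the ray to $y_n$, whose potential contribution is dominated, scale by scale, by the summable series
\[
\sum_{k=1}^\fz\Bigl(2^{-k}\int_{2^k}^{2^{k+1}} f(s)\,ds\Bigr)^{1/2}<\fz
\]
provided by \eqref{sapa1}. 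Combining this upper bound with the Pythagoras-type lower estimate $\tfrac12|\dot\gz|^2\ge \tfrac12|\dot\gz-\la\dot\gz,a\ra a|^2+\tfrac12\la\dot\gz,a\ra^2$, the excess action above $\lz\,t+\sqrt{2\lz}\,t\cdot$(projected distance) converts into quantitative bounds on both the transverse displacement of $\gz^{(n)}$ from the ray and the angular defect at the endpoint, uniformly in $n$.

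\textbf{Step 3 (Passage to the limit).} The length bound of Step 2 renders $\{\gz^{(n)}\}$ equi-Lipschitz on compacts of $[0,\fz)$, so an Arzel\`a--Ascoli diagonal extraction produces a locally uniform limit $\gz:[0,\fz)\to\wz\Omega$. Lower semicontinuity of $F$ passes to lower semicontinuity of $A_\lz$, so each restriction $\gz|_{[0,\sz]}$ minimizes $A_\lz$ among competitors in $\wz\Omega$ joining its endpoints; thus $\gz$ is an $m_\lz$-geodesic ray with canonical parameter starting from $x$. Writing $\gz(t)-x-s^\ast a=\la\gz(t)-x-s^\ast a,a\ra a+w(t)$, the angle bound forces $|w(t)|=o(t)$, while the canonical speed approaching $\sqrt{2\lz}$ gives $\la\gz(t)-x-s^\ast a,a\ra=\sqrt{2\lz}\,t+o(t)$. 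Together these yield the asymptotic $\gz(t)=\sqrt{2\lz}\,a\,t+o(t)$ required by (i).

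\textbf{Step 4 (Items (ii)--(iv)).} For (ii), the asymptotic $\gz(t)/t\to\sqrt{2\lz}a\in\Omega$ implies that $\gz(t)/t$ stays in a compact subset of $\Omega$ for $t$ large, so there is $t_0$ with $\gz|_{[t_0,\fz)}\subset\Omega$. For (iii), with $F\in C^2(\Omega)$, Lemmas A.3--A.4 of the appendix upgrade the collision-free canonical $m_\lz$-geodesic $\gz|_{[t_0,\fz)}$ to a $C^2$ solution of $\ddot\gz=\nb_x F(\gz)$ with energy $\lz$, and the asymptotic from (i) persists on this restriction. Part (iv) follows by invoking the same appendix result on the whole interval $(0,\fz)$, which is collision-free by hypothesis. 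The hard part will be Step 2: the dyadic induction must close with constants independent of $n$, and the slow-decay hypothesis \eqref{sapa1} must be summed scale by scale in precisely the right way so that both the transverse displacement and the angular defect remain controlled up to $t=\sz^{(n)}$.
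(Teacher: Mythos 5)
Your overall architecture matches the paper's: build $m_\lz$-geodesics $\gz^{(n)}$ from $x$ to points $2^n$ far along a ray, prove uniform length and angle estimates, pass to a limit via Arzel\`a--Ascoli, and read off the asymptotics. Steps 1, 3, 4 of your plan correspond broadly to the paper's Steps 1, 4 and the proof in Section 8. But the heart of the argument, your Step 2, is left as a sketch whose mechanism I do not think closes, and two of the auxiliary points are also different from the paper in ways that matter.

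\textbf{The base point.} You pick $s^\ast$ so that $x+s^\ast a+\rr_+ a\subset\Omega$. This ensures the single ray in direction $a$ is collision-free, but the induction in Step 2 will have to control geodesic legs pointing in directions $b$ near $a$ (because the geodesic will not lie exactly on the ray). The paper instead sets $x^\ast = x + 50(1+|x|)a/a^\flat$, chosen precisely so that $(x^\ast + tb)^\flat\ge 2$ for \emph{all} $t\ge 0$ and \emph{all} $b\in\bs^{dN-1}$ with $|b-a|\le a^\flat/20$ (Lemma 3.4). This uniform bound over a cone of directions is what licenses invoking the growth hypothesis \eqref{sapa1}, since $f$ only controls $F_{ij}$ when $|x_i-x_j|\ge 2$. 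Your weaker choice of $s^\ast$ does not deliver this, so the action of the intermediate straight-line competitors that inevitably arise in the induction is not bounded by $\Psi$.

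\textbf{The core estimate.} Your proposed mechanism is the orthogonal decomposition $\frac12|\dot\gz|^2 = \frac12\la\dot\gz,a\ra^2 + \frac12|w|^2$ with $w\perp a$, then comparing $A_\lz$ to $\lz t + \sqrt{2\lz}\cdot$(projected distance). Carried out as stated, this gives a single \emph{global} bound: writing $T=2^n$, one gets
\[
\int_0^{\sz^{(n)}}\tfrac12|w|^2\,ds \;\le\; A_\lz(\gz^{(n)}) - \sqrt{2\lz}\la y_n-x,a\ra \;\lesssim\; \sqrt{2\lz}\,\Psi(T).
\]
To turn this into a transverse-displacement bound at a \emph{fixed} intermediate time $t$, the only available tool is Cauchy--Schwarz, which yields $|$transverse displacement at $t|\lesssim\sqrt{t\cdot\Psi(T)}$. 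Since $\Psi$ is unbounded, this blows up as $n\to\fz$ for fixed $t$: the bound is \emph{not} uniform in $n$, and the Arzel\`a--Ascoli extraction in your Step 3 loses all quantitative control. The obstruction is a genuine circularity: to localize the excess-action bound to $[0,t]$ one needs to know $\la\gz(t)-x,a\ra\approx\sqrt{2\lz}t$, which is part of what one is trying to prove. The paper resolves this by an entirely different device (Lemma 5.1): given that the geodesic passes through $x^\ast + Sb$, a triangle/midpoint argument in Euclidean space shows that at the ``halfway'' time the geodesic is within $2\sqrt{S\Psi(S)}$ of $x^\ast + \frac{S}{2}b$, and stays within $4\sqrt{S\Psi(S)}$ of the ray $x^\ast+\rr_+ b$ afterwards. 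This halves the scale while producing a new direction $b'=a^{(i)}$ with $|a^{(i)}-a^{(i+1)}|\le 4\sqrt{2^{-(i+1)}\Psi(2^{i+1})}$, so the dyadic induction closes with constants independent of $n$ because the directional increments form a summable series by \eqref{sapa1}. That midpoint trick is the nontrivial geometric idea, and your plan does not contain it or a substitute.

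\textbf{Passage to the limit.} You invoke lower semicontinuity of $A_\lz$ to conclude that the limit $\gz$ is a geodesic ray. Lower semicontinuity alone gives $A_\lz(\gz|_{[0,\sz]})\le\liminf A_\lz(\gz^{(n)}|_{[0,\sz]})$, but you also need the reverse inequality $A_\lz(\gz|_{[0,\sz]})\ge m_\lz(x,\gz(\sz))$ is nontrivially equal, which requires showing $m_\lz(\gz^{(n)}(\sz),\gz(\sz))\to 0$ as $n\to\fz$ and using minimality of $\gz^{(n)}$. The paper does this carefully in Step 4 of its Section 8 proof (using that $F$ is locally bounded on compact subsets of $\Omega$); your sketch omits it. Your Step 4 for items (ii)--(iv) is essentially correct in outline.
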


The study of (non-)collisions is one of the main issues in this field. To some extent,  (non-)collisions are determined by the behavior of $F$ near  the set $\Sigma$ of collisions. There do exist potentials $F$, for which  one can find some $m_\lz$-geodesic including interior collisions, see \cite{btv13,btv14,mo18} for constructions. Therefore,  in Theorem 1.4 (iv), since
no other assumption is made on $F$ around $\Sigma$, we can not expect in general that the restriction $\gz|_{(0,t_0)}$ is collision-free.

On the other hand, it is well known (see \cite{ft04, bft08}) that all $m_\lz$-geodesics are collision-free interiorly, and hence are solutions to the system \eqref{eq4}, provided that  $F(x)$ satisfies any one of the following conditions  near $\Sigma$:
 \begin{enumerate}
\vspace{-0.3cm}
\item[(E1)]  For all $1\le i< j\le N$,
 $F_{ij}(x_i, x_j)\ge C m_i m_j |x_i-x_j|^{-2}$   in $\Delta_{1/2}$,
where  $C>0$.

\vspace{-0.3cm}\item[(E2)]    For all $1\le i< j\le N$,  $F_{ij} $ coincides with  the homogeneous potential $m_i m_j |x_i-x_j|^{-\alpha}$
in $\Delta_{1/2}$, where   $0<\alpha<\fz$.  When $\alpha=2$,
it was also called as the Jacobi-Banachiewitz potential; see \cite{ba106,w138, ch198}.

\vspace{-0.3cm}
\item[(E3)]  For all $1\le i< j\le N$,
 $F_{ij} $ is given by   the quasi-homogeneous potential
$$m_i m_j \left(|x_i-x_j|^{-\alpha}+\delta |x_i-x_j|^{-\beta} \right) \mbox{in $\Delta_{1/2}$, where  $0<\beta<\alpha<\fz$ and $\delta>0$.}$$
It was also called as the Manev potential when $\alpha=2$ and $\beta=1$ (see   \cite{ma124,di193, lm12}),
and as
the Schwarzschild potential
 when $\alpha=3$ and $ \beta=1$  (see \cite{aps14}).

\vspace{-0.3cm}
\item[(E4)] For all $1\le i< j\le N$,  $F_{ij} $ is given by the
 Lennard-Jones potential (see \cite{lj131, ll15}), that is, $A|x_i-x_j|^{-12}-B|x_i-x_j|^{-6}$  in
$\Delta_{\dz}$ for some $\dz=\dz(A,B)>0$, where  $A, B >0$.

\vspace{-0.3cm}
\item[(E5)] For all $1\le i< j\le N$,  $F_{ij} $
is gien by the Seeliger-Yukawa potential (see \cite{se1895, vn18}), that is, $A e^{-B|x_i-x_j|} |x_i-x_j|^{-1}$ in $\Delta_{1/2}$, where  $A, B >0$.
\vspace{-0.3cm}
\item[(E6)]   For all $1\le i< j\le N$,
$F_{ij} $ is given by the M\"{u}cket-Treder potential (see \cite{mt177, pp20}), that is,  $ |x_i-x_j|^{-1}(A-B  \log|x_i-x_j|)$ in $\Delta_{1/2}$,
where  $A, B >0$.

\vspace{-0.3cm}
\item[(E7)]  For all $1\le i< j\le N$, $F_{ij}$ coincides with the logarithmic  potential $m_i m_j \log|x_i-x_j|^{-1}$
in $\Delta_{1/2}$.
\end{enumerate}

Thus, as a direct consequence of Theorem 1.4 (iv), we have the following.

\begin{cor}\label{mthmc} Let $F\in C^{2}(\Omega)$ be as in \eqref{F} and satisfy \eqref{sapa1}.
Suppose that $F$ satisfies any one of (E1)-(E7) near $\Sigma$.

Given any energy constant $\lz>0$, any initial configuration $x  \in\wz\Omega$ and any non-collision configuration $a \in \Omega$ with $\|a\|=1$,   there is a hyperbolic motion   to the system \eqref{eq4} with the energy constant $\lz$,  along $\sqrt{2\lz} a$  and  starting from $x$.
\end{cor}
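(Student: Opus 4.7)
The plan is to combine Theorem \ref{mthm}(i)-(ii) and (iv) with the classical interior non-collision theory for $m_\lz$-geodesics under each of the potentials (E1)-(E7). First, I apply Theorem \ref{mthm}(i)-(ii) to obtain a hyperbolic motion $\gz : [0, +\fz) \to \wz\Omega$ to the Hamiltonian $\frac12\|p\|^2 - F(x)$ with energy constant $\lz$, starting from $x$ and along $\sqrt{2\lz}a$, together with a time $t_0=t_0(x,a,\lz) \ge 0$ such that $\gz|_{[t_0,\fz)} \subset \Omega$. By Theorem \ref{mthm}(iv), once it is additionally known that $\gz|_{(0,t_0)} \subset \Omega$, the curve $\gz$ is a hyperbolic motion to the system \eqref{eq4} starting from $x$ and along $\sqrt{2\lz}a$, which is exactly the desired conclusion. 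Thus the whole task reduces to verifying the short-time non-collision $\gz|_{(0,t_0)} \subset \Omega$.

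To this end, I use that $\gz$ is, by construction in the proof of Theorem \ref{mthm}, an $m_\lz$-geodesic ray with canonical parameter; equivalently, for every $\sz>0$ the restriction $\gz|_{[0,\sz]}$ is a minimizer of the action $A_\lz$ within $\bigcup_{\tau>0}\mathcal{AC}(x,\gz(\sz); [0,\tau]; \wz\Omega)$. The key step is then to invoke the classical interior non-collision results of Ferrario-Terracini \cite{ft04} and Barutello-Ferrario-Terracini \cite{bft08}: under any one of the hypotheses (E1)-(E7), every free-time, fixed-endpoint minimizer of $A_\lz$ avoids collisions in its open interior. For (E1), as well as for (E2) with $\alpha \ge 2$ and for (E4) where the $r^{-12}$ term dominates, this is the classical Gordon strong-force mechanism---any path with an interior collision carries infinite action and hence cannot be minimizing; the remaining sub-quadratic cases, namely $0<\alpha<2$ in (E2) together with the quasi-homogeneous, Seeliger-Yukawa, M\"ucket-Treder and logarithmic potentials, require the finer blow-up and local variation techniques developed in \cite{bft08}. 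Crucially, all these hypotheses are purely local near $\Sigma$, hence compatible with the large-scale growth condition \eqref{sapa1} imposed away from collisions.

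Applying the above to $\gz|_{[0,t_0]}$ gives $\gz|_{(0,t_0)} \subset \Omega$, and combining with Theorem \ref{mthm}(iv) yields the corollary. The main---and essentially the only---obstacle in the argument is selecting, for each of the seven potentials in (E1)-(E7), the precise statement in the existing literature that guarantees interior non-collision of $A_\lz$-minimizers; but since these statements are all already available in \cite{ft04, bft08}, no new input is required and Corollary \ref{mthmc} follows as a direct consequence of Theorem \ref{mthm}.
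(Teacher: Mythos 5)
Your proposal is correct and follows essentially the same route as the paper: the paper likewise reduces Corollary~\ref{mthmc} to Theorem~\ref{mthm}(iv) by invoking the interior non-collision results of Ferrario--Terracini and Barutello--Ferrario--Terracini under each of (E1)--(E7), and your breakdown of the cases into the strong-force mechanism versus the finer blow-up/local-variation arguments is an accurate (if slightly more detailed) account of what those references supply.
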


In Section 3--Section 8, we prove Theorem \ref{mthm} and hence Corollary 1.5;
in particular, when $\mbox{$F=U$ with $f(s)=s^{-1}$}$,  we  reprove Theorem 1.3.
For reader's convenience, in Section 2 we sketch our geometric proof to Theorem \ref{mthm}, and also introduce some necessary notations used in Section 3--Section 8 and Appendix.

\begin{rem}\rm
We remark that it seems possible to extend our geometric approach to  general Hamiltonians $H(x,p)$, and also  to the setting of (sub)Riemannian manifolds. We would like to see such  extension.
\end{rem}

{\color{black}
\medskip
\noindent {\bf Acknowledgement.}
The authors would like to thank the anonymous referee  for the careful reading,  several valuable suggestions/comments, and many important and detailed corrections,  which significantly improve the final presentation of the paper.  In particular, the authors thank the referee for   pointing out
a wrong statement  in the original version  about the relationship between $ (\wz\Omega,m_\lz)$ and the completion of $(\Omega,m_\lz)$.
 In this revision,  we correct this wrong statement in  Remark A.1 (ii) and (iii).
 In Remark A.1 (ii), under the assumption  that the potential $F=+\fz$ in $\Sigma$, we do show that $ (\wz\Omega,m_\lz)$  is the completion of $(\Omega,m_\lz)$.
  However, in Remark A.1 (iii),   we construct a potential $F$,  for which
  the set
  $ \{F=+\fz\}$ is strictly contained in $\Sigma$,  so that
the completion of $(\Omega,m_\lz)$ is strictly contained in $(\wz\Omega,m_\lz)$.
We are really in debt to the  anonymous referee.
}

\section{Sketch  of  the  proof to Theorem \ref{mthm} }

Note that we only prove Theorem \ref{mthm} under the assumption that
\begin{equation}\label{assmass}\mbox{the mass $m_i=1$ for each $1\le i\le N$.} \end{equation}
For general masses, by some necessary and standard modifications, our argument also works; we omit the details.

Before  sketching the proof, we introduce several notations which are needed below. In the sequel of this paper, we always assume that \eqref{assmass} holds and hence the corresponding weighted norm $\|\cdot\|$
(resp. weighted inner product)
{\color{black} is} then the Euclidean norm (resp. inner product), that is,
\begin{equation}\label{ass1} \|x\|=(\sum_{i=1}^N|x_i|^2)^{1/2}=|x|\quad
\mbox{and hence }\quad \langle \langle x,y\rangle\rangle=\sum_{i=1}^N\langle x_i,y_i\rangle\quad\mbox{ for $x,y\in\rr^{dN}$.}
\end{equation}

To measure the distance of any configuration $y\in\rr^{dN}$ to the set of collisions,   we always set
\begin{equation}\label{ass2}   y^\flat:=\min\{|y_i-y_j|: 1\le i<j\le N\}.
\end{equation}
Note that $y^\flat>0$ if and only if $y\in\Omega$, that is, $$\Omega=\{y\in \rr^{dN}:y^\flat>0\}.$$

For any $x,y\in\rr^{dN}$,
write $[x,y]$ (resp. $(x,y)$)  as the closed  (resp. open) {\color{black}line-segment} between them,  that is,
\begin{equation}\label{ass3} [x,y]=\{tx+(1-t)y\}_{t\in[0,1]} \ \mbox{and}\ (x,y) =[x,y]\setminus\{x,y\}=\{tx+(1-t)y\}_{t\in(0,1)}.
\end{equation}
{\color{black}Write $\rr_+=(0,\fz)$, the open half ray
$$x+\rr_+y=\{x+ty:t\in\rr_+\}$$
and $\overline{x+\rr_+y}$ as the closure of $x+\rr_+y$.}
{\color{black}Given $A \subset \rr^{dN}$ and $z \in \rr^{dN}$, let us denote  $d_E(z,A)$ the Euclidean distance from $z$ to $A$. In particular,   $d_E(z,[x,y])$ is the  Euclidean distance from $z$ to the  line-segment $[x,y]$, and $d_E(z,x+\rr_+y)$ is the  Euclidean distance from $z$ to the open half ray $x+\rr_+y$.}

For any absolutely  continuous curve $\gz:[0,\sz]\to\rr^{dN}$,   denote by $l(\gz)$  its Euclidean length, that is,
\begin{equation}\label{ass4}
l(\gz)=\int_0^\sz|\dot \gz(s)|\,ds.
\end{equation}
For any two configurations $a,b\in\rr^{dN}$ with $a,b\ne0$,  denote by $\angle (a,b)$ the angle between them, {\color{black}that is the unique element of $[0, \pi]$ satisfying}
$$\cos\angle(a,b)=\frac{\langle a,b\rangle }{|a||b|}.$$
Denote by $\mathbb S^{dN-1}$ {\color{black} the set of all configuration $a\in \rr^{dN}$ satisfying $|a|=1$}.
Note that, for any two configurations $a,b\in\mathbb S^{dN-1}$, the smallness of
 $|b-a| $  is  equivalent to that  of the angle $\angle(a,b)$; indeed,
\begin{equation}\label{angle=dis}\frac12|a-b|=\sin\frac12\angle(a,b).
\end{equation}

Now we are ready to sketch the proof to Theorem \ref{mthm}
  and hence  Corollary 1.5,
in particular, when  $\mbox{$F=U$ and $f(s)=s^{-1}$}$, we {\color{black}sketch  a new proof of}  Theorem 1.3.

\begin{proof}[Sketch of the proof to Theorem \ref{mthm}]
Given any energy constant $\lz>0$, any initial configuration $x \in\wz\Omega$ and any non-collision configuration $a\in\Omega$ with $|a|=1$, we obtain the desired hyperbolic motion in  Theorem \ref{mthm} by the following 4 steps.

\medskip
\noindent {\bf Step 1.}
Writing
 $x^\ast:= x+  50(1+|x|)a/a^\flat,$
we have
 \begin{equation}\label{zz1a}
\mbox{$(x^\ast +tb)^\flat\ge 2$ and hence  $x ^\ast  +tb\in \Omega$ whenever $t\ge 0$ and
 $ b\in \mathbb S^{dN-1}$ with $|b-a|\le \frac1{20}a^\flat$.}
\end{equation}
For any $t, b$ as in \eqref{zz1a}, and
for   any  $m_\lz$-geodesic $\gz$  with canonical parameter  joining $x$ and   $x^\ast +tb$, we  have the following upper bounds:
\begin{equation}\label{eqk1a}
  t\le l(\gz ) \le \frac1{\sqrt{2\lz}}m_\lz(x,x ^\ast  +tb) \le  t+  \Psi(t), 
\end{equation}
where
 $$
 \Psi(t):=\frac1{\sqrt{2\lz}}m_\lz(x,x^\ast)+\frac1\lz N^2\frac1{a^\flat}\int_{(x^\ast)^\flat}^{2t+2|x|}f(s)\,ds.$$

Both \eqref{zz1a} and \eqref{eqk1a}  are stated in Section 5.
Their proofs  are given in Section 3 and Section 4  based on the  basic geometry of non-collision configurations  established in Section 3 and some careful analysis.
The choice of the {\color{black}configuration} $x^\ast$  is necessary in general  to    get \eqref{eqk1a} since   $F$ is only assumed to be bounded by $f$ faraway from collisions.

 We remark that  the growth assumption \eqref{sapa1} guarantees that
\begin{equation}\label{psit} \wz \Psi(t)=\sum_{j\ge [\log_2 t ]}\sqrt{2^{-j}\Psi(2^j)} \to 0\quad\mbox{as $t\to\fz$};\end{equation}
see Section 5. Below we fix $n_0\in\nn$ large enough,  in particular, to guarantee that $\wz\Psi(2^{n_0})\le 2^{-10}a^\flat$.

\medskip
\noindent{\bf Step 2.}    For any integer $n \gg n_0$, denote by   $\gz ^{(n)}:[0,\sz^{(n)}]\to\wz \Omega$  an $m_\lz$-geodesic with canonical parameter joining $x$ and $x^\ast
+2^na$.  In Section 5, we prove the following crucial geometric observation: If $$\mbox{ $x^\ast + Sb= \gz^{(n)} ( \tau) $  for some  $b\in \mathbb S^{dN-1}$ with $ |a-b |\le \frac1{20}a^\flat$, and  $S\ge 2^{n_0}$ and $ \tau\in (0,\sz^{(n)}]$},$$then one has
 \begin{align} \label{pw1a}
 |\gz ^{(n)}(  \tau_{1/2}) -(x^\ast + \frac S2b)| &\le 2 \sqrt{  S \Psi(S)} \mbox{ with}\
   \tau_{1/2} :=\max\{0<t< \tau: |x^\ast-{\color{black}\gz^{(n)} }(t)|=\frac S2\},
\end{align}
 and 
\begin{equation}\label{zq2a}
  d_E(\gz^{(n)}(t), x^\ast+ \rr^+b )  \le  4\sqrt {S \Psi(S)}, \quad\forall t\in[ \tau_{1/2},\tau].
\end{equation}

To see them, first note that the assumptions in $S$ and $\tau$ allow us to
conclude $l (\gz^{(n)}|_{[0,\tau]} )\le S+\Psi(S)$ from
  \eqref{eqk1a}.  Next, we construct  some suitable  triangles as illustrated by Figure 1--Figure 5 in Section 5,  and then bound the lengths of the longest sides via the Euclidean lengths of certain restrictions of $\gz^{(n)}$, which will be estimated with the aid of  $l (\gz^{(n)}|_{[0,\tau]} )$.
Using the geometry of   triangles, we could get the desired \eqref{pw1a} and \eqref{zq2a} through careful calculation.

\medskip

\noindent {\bf Step 3.}
Thanks to \eqref{eqk1a},  \eqref{pw1a} and \eqref{zq2a}, by  an induction argument as illustrated in Figure 6 of Section 6,  we obtain the following upper bound  for the Euclidean length  
\begin{align}\label{sizeesta}l(\gz^{(n)}|_{[0,t]})\le \frac1{\sqrt{2\lz}}m_{\lz}(x,\gz^{(n)}(t))\le |\gz^{(n)}(t)-x^\ast|(1+\wz\Psi^2(|\gz^{(n)}(t)-x^\ast|))   \quad  \mbox{whenever}\
\sz^{(n)}_{n_0}\le t<\sz^{(n)}
\end{align}
with     $$ \sz^{(n)}_{n_0}:=\max\{s\in[0,\sz^{(n)}]:
|\gz^{(n)}(s)-x^\ast|=2^{n_0}\},$$  and also obtain
an  upper bound  for the angle $\angle(\gz^{(n)} (t)- x^\ast,a)$  between  $\gz^{(n)} (t)- x^\ast$ and $a$,  equivalently,
\begin{align}\label{distanceesta}| \frac{\gz^{(n)}(t)-x^\ast}{|\gz^{(n)}(t)-x^\ast|}-a | \le 2^4\wz\Psi( |\gz^{(n)}(t)-x^\ast|) \quad  \mbox{whenever}\ \sz^{(n)} _{n_0}\le t<\sz^{(n)} .
 \end{align}
 See Section 6 for more details.

\medskip

\noindent {\bf Step 4.} Recall  that $\gz^{(n)}$ has energy constant $\lz$, that is,
$$|\dot\gz^{(n)}(s)|^2=2
F\circ\gz^{(n)}(s)+2\lz \quad\mbox{for almost all } s\in[0,\sz^{(n)} ];
$$
see Lemma A.3 in Appendix. Combining this, \eqref{sizeesta} and \eqref{distanceesta},  we prove in Section 7  that
$$\frac{|\gz^{(n)}(t)-x^\ast-\sqrt{2\lz}at|   }{  \sqrt{2\lz}t }\le 2^5  \wz \Psi(\frac12\sqrt{2\lz}t)  \quad  \mbox{whenever}\  {\color{black}\sz^{(n)}} \ge t\ge \frac1{\sqrt{2\lz}}2^{n_0+1}.$$

Next, sending $n\to\fz$ (up to some subsequence),  $\gz^{(n)}$ converges to some
 $m_\lz$-geodesic ray $\gz$ with canonical parameter,  which  satisfies
$$\frac{|\gz (t)-x^\ast-\sqrt{2\lz}at|   }{  \sqrt{2\lz}t }\le 2^5  \wz \Psi(\frac12\sqrt{2\lz}t) \quad  \mbox{whenever}\  t\ge \frac1{\sqrt{2\lz}}2^{n_0+1}.$$
Recall \eqref{psit}, that is, $\wz\Psi(s)\to 0$ as $s\to\fz$. We know that $\gz$ is the desired hyperbolic motion starting from $x$ and along $\sqrt {2\lz}a$. For details see Section 8.

\end{proof}

\section{Some basic geometric properties of non-collision configurations}\label{s2}

In what follows, we always assume the mass $ m_i=1$ for each $1\le i\le N$. Recall that $\Omega$ is the set of non-collision configurations, that is, all $x\in\rr^{dN}$ with $x^\flat>0$. We refer to \eqref{ass1}-\eqref{ass4} and therein for several notations which will be used below.

In this section, we prove several basic geometric properties of non-collision configurations (see Lemma 2.1--Lemma 2.4),  which will be used later.

\begin{lem}\label{angle}
Let  $b \in \rr^{dN}$ and $a\in \bs^{dN-1}$ with $a^\flat>0$.
If $|a-b|\le \dz{a^\flat} $ for some $0<\dz<\frac15$, then
\begin{equation}\label{bflata}
b^\flat > a^\flat-2|b-a|\ge  (1-2\dz)a^\flat,
\end{equation}
and
\begin{equation*}
\cos\angle (a_i-a_j,b_i-b_j) =\frac{\langle a_i-a_j,b_i-b_j\rangle }{|a_i-a_j||b_i-b_j|} \ge  1-4\dz, \quad\forall 1\le i<j\le N.
\end{equation*}
If we further assume $b\in \bs^{dN-1}$, then
\begin{equation}\label{angleab}
\cos\angle (a,b)=\langle a,b\rangle\ge 1-2\dz^2.
\end{equation}
\end{lem}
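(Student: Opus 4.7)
The plan is to handle the three assertions separately, in the order stated, each reducing to an elementary computation. The single bookkeeping fact that underlies all three is the coordinate-wise bound $|a_k - b_k| \le |a - b|$ for each $k$, which is immediate from $|a-b|^2 = \sum_l |a_l - b_l|^2 \ge |a_k - b_k|^2$.

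For the first assertion, I would fix any $1 \le i < j \le N$ and apply the triangle inequality twice to obtain $|b_i - b_j| \ge |a_i - a_j| - |a_i - b_i| - |a_j - b_j| \ge a^\flat - 2|a-b|$. The hypothesis $|a-b| \le \delta a^\flat$ then yields $|b_i - b_j| \ge (1-2\delta)a^\flat > 0$, and taking the minimum over pairs gives $b^\flat \ge a^\flat - 2|a-b|$. For the strict inequality, I would refine the above step using $|a_i - b_i|^2 + |a_j - b_j|^2 \le |a-b|^2$ together with Cauchy--Schwarz, which yields $|a_i - b_i| + |a_j - b_j| \le \sqrt{2}\,|a-b| < 2|a-b|$ whenever $a \ne b$.

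For the second assertion, I would set $u = a_i - a_j$ and $v = b_i - b_j$. Polarization together with $|u|^2 + |v|^2 \ge 2|u||v|$ gives
\[
\cos\angle(u,v) = \frac{|u|^2 + |v|^2 - |u-v|^2}{2|u||v|} \ge 1 - \frac{|u-v|^2}{2|u||v|}.
\]
Substituting $|u - v| \le |a_i - b_i| + |a_j - b_j| \le 2\delta a^\flat$, $|u| \ge a^\flat$, and (by the first assertion) $|v| \ge (1-2\delta)a^\flat$, the right-hand side is at least $1 - \frac{2\delta^2}{1-2\delta}$. The hypothesis $\delta < 1/5$ then yields $\frac{2\delta^2}{1-2\delta} \le \frac{10\delta^2}{3} \le 4\delta$, as required.

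For the third assertion, since $|a| = |b| = 1$, the identity $|a-b|^2 = 2 - 2\langle a, b\rangle$ gives directly $\langle a, b\rangle = 1 - \tfrac12|a-b|^2 \ge 1 - \tfrac12\delta^2(a^\flat)^2$. Combining this with the elementary bound $(a^\flat)^2 \le 2$, which follows from $|a_i - a_j|^2 \le 2(|a_i|^2 + |a_j|^2) \le 2|a|^2 = 2$ for any pair $i \ne j$, yields $\langle a, b\rangle \ge 1 - \delta^2 \ge 1 - 2\delta^2$. All three computations are short, and I do not expect any serious obstacle; the mildly delicate point is the strict inequality in the first assertion, which relies on the $\sqrt{2}$ refinement rather than the naive $2$.
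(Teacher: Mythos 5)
Your proof is correct. For the first and third assertions you take essentially the paper's route, with two small improvements; for the second assertion you take a genuinely different route.

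On the first assertion, the paper writes $|b_i-b_j|\ge |a_i-a_j|-|b_i-a_i|-|b_j-a_j|>|a_i-a_j|-2|b-a|$ without commenting on why the last step is strict. Your Cauchy--Schwarz refinement $|b_i-a_i|+|b_j-a_j|\le\sqrt 2\,|a-b|<2|a-b|$ supplies the missing justification, and your caveat ``whenever $a\ne b$'' is exactly right: when $a=b$ both sides of $b^\flat>a^\flat-2|b-a|$ equal $a^\flat$, so the strict inequality in the lemma statement actually fails in that boundary case. (In every application $a\ne b$, so the paper's statement is harmless, but you have correctly identified the only place where strictness needs care.) On the third assertion, the paper asserts $a^\flat<2$ without proof; your derivation $(a^\flat)^2\le|a_i-a_j|^2\le 2(|a_i|^2+|a_j|^2)\le 2|a|^2=2$ is cleaner and gives the slightly sharper $\langle a,b\rangle\ge 1-\delta^2$.

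The real divergence is in the middle inequality. The paper expands
$\langle a_i-a_j,b_i-b_j\rangle=|a_i-a_j|^2+\langle a_i-a_j,(b_i-a_i)-(b_j-a_j)\rangle\ge|a_i-a_j|\bigl(|a_i-a_j|-2|b-a|\bigr)$,
bounds $|b_i-b_j|\le|a_i-a_j|+2|b-a|$, and divides, landing directly on the linear form $1-\tfrac{4|b-a|}{a^\flat}\ge 1-4\delta$. You instead invoke the polarization identity
$\cos\angle(u,v)=\dfrac{|u|^2+|v|^2-|u-v|^2}{2|u||v|}\ge 1-\dfrac{|u-v|^2}{2|u||v|}$,
substitute $|u-v|\le 2\delta a^\flat$, $|u|\ge a^\flat$, $|v|\ge(1-2\delta)a^\flat$ (using your first assertion), and obtain the quadratic bound $1-\tfrac{2\delta^2}{1-2\delta}$, which is strictly sharper than $1-4\delta$ on all of $(0,1/5)$ and which you then relax to match the stated form. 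Both arguments are elementary and about the same length; the paper's hits the advertised $1-4\delta$ without further comparison, while yours produces a better constant as a byproduct at the cost of the extra check $\tfrac{2\delta^2}{1-2\delta}\le 4\delta$.
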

\begin{proof} By the triangle inequality,
$$|b_i-b_j|\ge |a_i-a_j|- |b_i-a_i|-|b_j-a_j|> |a_i-a_j|-2|b-a|, \quad\forall 1\le i<j\le N.$$
From this, we conclude
$$b^\flat> a^\flat-2|b-a|.$$
Thus, by $|a-b|\le \dz a^\flat $, it holds that
$$b^\flat \ge (1-2\dz)a^\flat.$$

Moreover, for all $ 1\le i<j\le N,$  by the Cauchy-Schwartz inequality and the triangle inequality,
$$\langle a_i-a_j,b_i-b_j\rangle= \langle a_i-a_j,a_i-a_j\rangle+\langle a_i-a_j,b_i-a_i-b_j+a_j\rangle\ge |a_i-a_j|[|a_i-a_j|-2|b-a|]. $$
Note that
$$|b_i-b_j|\le |a_i-a_j|+ |b_i-a_i|+|b_j-a_j|\le |a_i-a_j|+2|b-a|.$$
We then have
$$\frac{\langle a_i-a_j,b_i-b_j\rangle}{|a_i-a_j||b_i-b_j|} \ge  \frac{ |a_i-a_j|-2|b-a| }{|a_i-a_j|+2|b-a|}= 1- \frac{4|b-a| }{|a_i-a_j|+2|b-a|}\ge 1-\frac{4|b-a| }{a^\flat },$$
  and hence, by $|a-b|\le \dz a^\flat $, it holds that
$$\cos  \angle (a_i-a_j,b_i-b_j) \ge    1- 4\dz.$$

For the last inequality \eqref{angleab}, note that $a^\flat<2$ and
\[2-2\langle a,b\rangle = |a-b|^2 \le \delta^2 (a^\flat)^2. \]
It implies
\[ \langle a,b\rangle=1-\frac12 |a-b|^2 \ge 1- \frac12 \delta^2 (a^\flat)^2 \ge 1-2\delta^2 \]
as desired.
\end{proof}

\begin{lem}\label{findxast} Let  $x\in \rr^{dN}$ and
 $a\in \bs^{dN-1}$ with $a^\flat>0$.  For {\color{black}$s\ge50\frac{1+|x| }{a^\flat} $}, we have
$$ |
\frac{x+sa }{|x+sa|}-a| 
\le \frac{ a^\flat }{20}  $$
and $$(x+sa)^\flat\ge \frac{24}{25} s a^\flat>2. $$
\end{lem}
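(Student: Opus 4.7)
The plan is a direct computation using the triangle inequality. I would first record the elementary bound $a^\flat\le\sqrt{2}$, which follows from $|a|=1$ and $|a_i-a_j|^2\le 2(|a_i|^2+|a_j|^2)$; in particular $a^\flat<10$, and this will let the constants match up cleanly at the end.

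For the direction estimate, I would rewrite
\[
\frac{x+sa}{|x+sa|}-a=\frac{x+(s-|x+sa|)a}{|x+sa|},
\]
bound the numerator by $|x|+\bigl||x+sa|-s\bigr|\le 2|x|$ via the reverse triangle inequality, and bound the denominator from below by $s-|x|$ to obtain
\[
\Bigl|\frac{x+sa}{|x+sa|}-a\Bigr|\le\frac{2|x|}{s-|x|}.
\]
It then remains to show that the hypothesis $s\ge 50(1+|x|)/a^\flat$ forces $\frac{2|x|}{s-|x|}\le\frac{a^\flat}{20}$, equivalently $s\ge|x|(1+40/a^\flat)$. Substituting the hypothesis reduces this to $50\ge|x|(a^\flat-10)$, which is automatic from $a^\flat<10$.

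For the minimum inter-body distance, I would apply the triangle inequality pairwise: for $1\le i<j\le N$,
\[
\bigl|(x_i+sa_i)-(x_j+sa_j)\bigr|\ge s|a_i-a_j|-|x_i-x_j|\ge sa^\flat-2|x|,
\]
where $|x_i-x_j|\le|x_i|+|x_j|\le 2|x|$. Taking the minimum over pairs gives $(x+sa)^\flat\ge sa^\flat-2|x|$, and the hypothesis $sa^\flat\ge 50(1+|x|)$ yields both $sa^\flat-2|x|\ge\frac{24}{25}sa^\flat$ (equivalent to $sa^\flat\ge 50|x|$) and $\frac{24}{25}sa^\flat\ge\frac{24}{25}\cdot 50>2$. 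No step presents a real obstacle: the lemma simply quantifies that once $s\gs|x|/a^\flat$, the direction of $x+sa$ stays close to $a$ and its minimum inter-body distance remains comparable to $sa^\flat$; the only thing to watch is the numerical matching of the constant $50$ in the hypothesis with the constants $1/20$ and $24/25$ appearing in the conclusion.
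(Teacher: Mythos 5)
Your proof is correct and follows essentially the same route as the paper: rewrite the direction discrepancy as $\bigl(x+(s-|x+sa|)a\bigr)/|x+sa|$, bound the numerator by $2|x|$ via the reverse triangle inequality, bound the denominator below by $s-|x|$, and then verify the resulting inequality using a crude upper bound on $a^\flat$ (the paper uses $a^\flat<2$; your observation $a^\flat\le\sqrt{2}$ is a sharper version of the same fact, and either suffices). The pairwise triangle-inequality argument for $(x+sa)^\flat\ge sa^\flat-2|x|$ is also exactly what the paper does.
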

\begin{proof}
For $s>|x|$, write
$$  |
\frac{x+sa }{|x+sa|}-a|= \frac1{|x  + sa|}\left |x + sa-|x + sa|a\right|.$$
By the triangle inequality, one has $$\left |x + sa-|x + sa|a\right|\le |x|  + \left|s- |x    +sa|\right|= |x|  + \left||sa|- |x    +sa|\right|\le 2|x|,$$
and hence
$$ |\frac{x+sa }{|x+sa|}-a| \le \frac{1}{|x+sa|}2|x| \le
\frac1{s-|x|  }  {2|x|}  .$$
By  $s \ge 50\frac{1+|x| }{a^\flat}$ and $a^\flat < 2$,
we have   $s-|x| \ge 40\frac{|x|}{  a^\flat }$ and hence
$$ \frac1{s-|x|  }  {2|x|}\le  \frac{a^\flat}{20}  $$
as desired.

Moreover, it is easy to see that
$$(x+sa)^\flat \ge sa^\flat -2|x|\ge  \frac {24}{25} s  a^\flat.$$
The proof is complete.
\end{proof}

\begin{lem}  \label{xast2a}
If  $z,b\in\Omega$  satisfy
  $$ \angle (b_i-b_j,z_i-z_j)\le \frac\pi2 \mbox{ or  equivalently}\ \cos  \angle (b_i-b_j,z_i-z_j)\ge 0 \quad\forall 1\le i<j\le N,
$$
then  $(z+tb)^\flat>z^\flat>0$, and hence $ z+tb \subset\Omega$, for all $t\in[0,\fz)$.
\end{lem}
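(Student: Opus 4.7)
The plan is to bound each pairwise squared distance $|(z_i + tb_i) - (z_j + tb_j)|^2$ from below by expanding it as a quadratic polynomial in $t$, and then to take the minimum over the finitely many pairs $1 \le i < j \le N$.

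First I would write, for each fixed pair $i<j$,
$$|(z_i + tb_i) - (z_j + tb_j)|^2 = |z_i - z_j|^2 + 2t\,\langle z_i - z_j,\, b_i - b_j\rangle + t^2 |b_i - b_j|^2.$$
The angle hypothesis gives $\langle z_i - z_j, b_i - b_j\rangle \ge 0$, so the linear term contributes non-negatively for $t \ge 0$. The assumption $b \in \Omega$ gives $|b_i - b_j| \ge b^\flat > 0$, so the quadratic term is strictly positive for every $t > 0$. Combining, for all $t > 0$ one has
$$|(z_i + tb_i) - (z_j + tb_j)|^2 \ge |z_i - z_j|^2 + t^2 (b^\flat)^2 > |z_i - z_j|^2 \ge (z^\flat)^2.$$

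Second, I would take the minimum over the finitely many pairs $(i,j)$, obtaining $(z + tb)^\flat > z^\flat > 0$ for every $t > 0$; at $t = 0$ one trivially has $(z+tb)^\flat = z^\flat > 0$. In either case $z + tb \in \Omega$, which is the claim.

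There is essentially no obstacle here: the argument is a one-line computation per pair followed by a minimum. The only point worth noting is that the non-degeneracy of the quadratic term, which upgrades the monotonicity to strict monotonicity, uses the full strength of $b \in \Omega$; without it the pair $b_i = b_j$ would be allowed and the strict inequality in $(z+tb)^\flat > z^\flat$ could fail.
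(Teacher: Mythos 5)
Your proof is correct and follows essentially the same route as the paper's: expand the squared pairwise distance in $t$, discard the non-negative cross term using the angle hypothesis, and use $|b_i-b_j|\ge b^\flat>0$ to obtain the lower bound, then minimize over pairs. You also correctly note (as the paper's statement glosses over) that the strict inequality $(z+tb)^\flat>z^\flat$ really only holds for $t>0$, with equality at $t=0$.
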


\begin{proof}
It suffices to prove that
for any $1\le i<j\le N$,
$$\mbox{$ |z_i+tb_i-(z_j+tb_j)|\ge |z_i-z_j|$ for all $t> 0$. }$$
To see this, write $$\mbox{$q=z_i-z_j$ and $e=b_i-b_j$, and hence, $|z_i+tb_i-(z_j+tb_j)|=|q+te|$}.$$
 Then
  $  |e|\ge b^\flat>0$ and $|q|\ge z^\flat >0$. The assumption  $\angle (q,e)\le \pi/2$ implies $\langle q,e\rangle\ge 0.$  Thus
$$ |q+et|=\sqrt{\langle q+et,q+et\rangle}=\sqrt{|q|^2+2\langle q,e\rangle t+|e|^2t^2}\ge
\sqrt{|q|^2 +|e|^2t^2}\ge
|q|\ge z^\flat> 0, \quad \forall t\ge 0 $$
 as desired.
\end{proof}

\begin{lem}\label{lem3-4}
 Let  $x\in \rr^{dN}$ and
 $a\in \bs^{dN-1}$ with $a^\flat>0$.
If $b\in \bs^{dN-1}$ with $|a-b|\le \frac{a^\flat}{20}$,
then for all  {\color{black}$s\ge 50\frac {1+ |x|}{a^\flat}$} and $ t\ge 0,$ one has
\begin{equation}\label{cofree}
  (x+sa+tb)^\flat\ge (x+sa)^\flat \ge \frac{24}{25} s a^\flat\ge2 .
\end{equation}
Therefore, $x+sa+tb \in \Omega$.
\end{lem}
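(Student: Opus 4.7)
The plan is to derive the chain of inequalities in two independent steps, using the three preceding lemmas as the main tools. The inequality $(x+sa)^\flat \ge \tfrac{24}{25}sa^\flat \ge 2$ is literally the second conclusion of Lemma \ref{findxast} (since the hypothesis $s \ge 50(1+|x|)/a^\flat$ is exactly the one required there), so no further work is needed for that portion. The remaining inequality $(x+sa+tb)^\flat \ge (x+sa)^\flat$ is where the real content lies, and the natural tool is Lemma \ref{xast2a} applied with $z := x+sa$: it suffices to verify that
\[\cos\angle(z_i-z_j,\,b_i-b_j) \ge 0 \quad\text{for every } 1\le i<j\le N,\]
for then Lemma \ref{xast2a} instantly gives the monotonicity in $t$ and the containment $x+sa+tb\in\Omega$.

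To carry out this verification, I would expand
\[\langle z_i-z_j,\, b_i-b_j\rangle = \langle x_i-x_j,\, b_i-b_j\rangle + s\,\langle a_i-a_j,\, b_i-b_j\rangle\]
and bound the two terms against each other. The second term is controlled from below using Lemma \ref{angle} with $\delta = 1/20$: since $|a-b|\le a^\flat/20 < a^\flat/5$, one gets $\cos\angle(a_i-a_j, b_i-b_j)\ge 1-4/20 = 4/5$, so
\[s\,\langle a_i-a_j,\, b_i-b_j\rangle \ge \tfrac{4}{5}\,s\,|a_i-a_j|\,|b_i-b_j| \ge \tfrac{4}{5}\,s\,a^\flat\,|b_i-b_j|.\]
The first term is bounded by Cauchy--Schwarz together with $|x_i-x_j|\le \sqrt{2}|x|$, giving
\[\langle x_i-x_j,\, b_i-b_j\rangle \ge -\sqrt{2}\,|x|\,|b_i-b_j|.\]

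Combining the two estimates and factoring out $|b_i-b_j|$ reduces the desired inequality to $\tfrac{4}{5}sa^\flat \ge \sqrt{2}|x|$, which is implied by the much stronger hypothesis $s \ge 50(1+|x|)/a^\flat$: indeed this gives $\tfrac{4}{5}sa^\flat \ge 40(1+|x|) \ge \sqrt{2}|x|$. Thus $\langle z_i-z_j, b_i-b_j\rangle \ge 0$ for every pair $i<j$, and Lemma \ref{xast2a} finishes the argument.

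There is no real obstacle here, as everything reduces to combining Lemma \ref{angle} and Lemma \ref{xast2a} in the obvious way; the only minor point to watch is keeping the constants honest, namely that the assumption $\delta = 1/20 < 1/5$ is what one needs to invoke Lemma \ref{angle} with the lower bound $\cos\angle \ge 4/5$, and that the size hypothesis on $s$ is generous enough (by a factor far larger than $1$) to absorb the $\sqrt{2}|x|$ correction coming from the $x$-translation.
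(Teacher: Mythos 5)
Your proof is correct, and it takes a cleaner and more elementary route than the paper's. Both you and the authors reduce the lemma to verifying the angle hypothesis of Lemma~\ref{xast2a} with $z=x+sa$ and then invoke Lemma~\ref{findxast} for the tail $(x+sa)^\flat\ge\frac{24}{25}sa^\flat\ge 2$, but the verification itself differs. The paper's argument passes through the normalized configuration $c=\frac{x+sa}{|x+sa|}$: it uses Lemma~\ref{findxast} to get $|c-a|\le a^\flat/20$, combines this with $|a-b|\le a^\flat/20$ to conclude $|c-b|\le b^\flat/9$, then invokes a monotonicity of the angle $\angle(c+\delta b,b)$ in $\delta\ge 0$ to control $\frac{x+sa+tb}{|x+sa+tb|}$ simultaneously for all $t$, and finally feeds the resulting bound into Lemma~\ref{angle} to obtain $\cos\angle\bigl((x+sa+tb)_i-(x+sa+tb)_j,\ b_i-b_j\bigr)\ge 5/9$. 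Your approach skips the normalization entirely: you expand $\langle z_i-z_j,b_i-b_j\rangle = \langle x_i-x_j,b_i-b_j\rangle + s\langle a_i-a_j,b_i-b_j\rangle$, lower-bound the $s$-term by $\tfrac{4}{5}sa^\flat|b_i-b_j|$ via Lemma~\ref{angle} with $\delta=1/20$, upper-bound the loss from $x$ by $\sqrt{2}|x||b_i-b_j|$ by Cauchy--Schwarz, and observe that the hypothesis $sa^\flat\ge 50(1+|x|)$ leaves enormous slack ($40|x|\gg\sqrt2|x|$). Your version only needs the weaker conclusion $\cos\angle\ge 0$ rather than $\ge 5/9$, but that is exactly what Lemma~\ref{xast2a} asks for, and the direct expansion avoids both the auxiliary projection $c$ and the $\delta$-monotonicity digression. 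The only ingredients you should state explicitly when writing this up are that $z=x+sa\in\Omega$ (from Lemma~\ref{findxast}) and $b\in\Omega$ with $b^\flat\ge\tfrac{9}{10}a^\flat$ (from Lemma~\ref{angle}), which are the membership hypotheses of Lemma~\ref{xast2a}; you check them implicitly, but they deserve a sentence.
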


\begin{proof}
For any fixed $s\ge 50\frac{1+|x| }{a^\flat}$, write $c= \frac{x+sa}{|x + sa|}$.
By Lemma \ref{findxast},  $|c-a|\le \frac{a^\flat}{20}$.
Since $|a-b|\le \frac{a^\flat}{20}$, by \eqref{bflata} in Lemma \ref{angle}, we have
$$|c -b|\le |c-a|+|a-b|\le \frac{a^\flat}{10}\  \, \, \mbox{and}\  \, \,  b^\flat\ge  \frac9 {10}a^\flat >0.$$
Thus
 $|c-b|\le \frac{b^\flat}{9}$. By Lemma \ref{angle}, it implies that  $\angle (c,b) \le  \arccos \frac{79}{81} <\pi/2$. Then for all $\dz\ge 0$, it holds that $\angle(   c+\dz b,b)\le \angle (c,b)<\pi/2$. Hence,
\begin{equation}\label{cbdelta}
 |\frac{c+\dz b }{|c+\dz b |} -b| =2\sin \frac12\angle(   c+\dz b,b)\le 2\sin \frac12\angle (c,b)=
|c-b|\le  \frac{b^\flat}{9}, \,  \quad\forall \dz\ge 0.
\end{equation}
Note that for any $t\ge0$,
\[\frac{x +sa+tb }{|x+sa+tb|}= \frac{c+\frac{t}{|x +sa|}b }{|c+\frac{t}{|x +sa|}b |}. \]
By \eqref{cbdelta}, it follows that
$$ |\frac{x +sa+tb }{|x+sa+tb|} -b|= |\frac{c+\frac{t}{|x +sa|}b }{|c+\frac{t}{|x +sa|}b |}-b|\le    \frac{b^\flat}{9}. $$
By Lemma \ref{angle}, it holds that
$$\cos \angle ( x_i +sa_i+tb_i-x_j -sa
 _j-tb_j,b_i-b_j)\ge \frac 5 9> \frac12,   \quad \mbox{ $\forall$ $1\le i<j\le N$}.$$
From Lemma \ref{xast2a} and Lemma \ref{findxast}, it follows that  $$(x+sa+tb)^\flat\ge   (x+sa)^\flat\ge \frac{24}{25} s a^\flat.$$
We {\color{black} have completed} the proof.
\end{proof}

%
%

\section{An  upper bound of Euclidean length of $m_\lz$-geodesics}
In this section, we build up the following upper bound of the Euclidean length of $m_\lz$-geodesics with canonical parameter.

\begin{lem} \label{uppm} Let $x\in\wz\Omega$, $a\in \mathbb S^{dN-1}$  with $a^\flat>0$ and $\lz>0$.
 For any {\color{black}$s \ge50\frac{1+|x|}{a^\flat}$}, $b\in \mathbb S^{dN-1}$ with $|a-b|\le \frac{a^\flat}{20}$,
and  any $T\ge 0$, if   $\gz\in \mathcal {AC}(x, {\color{black} x+sa+Tb};[0,\sz];\wz\Omega)$  is an  $m_\lz$-geodesic with canonical parameter, then we have
\begin{equation}\label{eqk1}
  l(\gz ) \le \frac1{\sqrt{2\lz}}m_\lz(x,x+sa+Tb) \le  T+ \frac1{\sqrt{2\lz}} m_{\lz}(x,x+sa)+ \frac{1}{ \lz }  N^2
 \frac1{a^\flat  } {\color{black} \int_{( x+sa )^\flat}^{2T+2| x+sa |} f(\tau)  \, d\tau}.
\end{equation}
\end{lem}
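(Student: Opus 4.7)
I would prove the two inequalities separately, by quite different techniques.

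The first inequality $l(\gamma)\le \frac{1}{\sqrt{2\lambda}}m_\lambda(x,x+sa+Tb)$ is a soft consequence of the minimizing property of $\gamma$ combined with AM--GM. Since $\gamma$ is an $m_\lambda$-geodesic with canonical parameter, it minimizes $A_\lambda$, so $m_\lambda(x,x+sa+Tb)=A_\lambda(\gamma)$. The pointwise inequality $\tfrac12 v^2+\lambda\ge\sqrt{2\lambda}\,v$ (valid for $v\ge 0$) combined with $F\ge 0$ gives
\[
\tfrac12|\dot\gamma(s)|^2+F\circ\gamma(s)+\lambda\ge\sqrt{2\lambda}\,|\dot\gamma(s)|\quad\text{for a.e.\ } s,
\]
and integrating over $[0,\sigma]$ yields $A_\lambda(\gamma)\ge\sqrt{2\lambda}\,l(\gamma)$, which is the claimed bound.

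For the second inequality I would first apply the triangle inequality on the metric space $(\wz\Omega,m_\lambda)$ to reduce the problem to estimating $m_\lambda(x+sa,x+sa+Tb)$, and then test this via the straight line $\eta(t):=x+sa+\sqrt{2\lambda}\,tb$ for $t\in[0,T/\sqrt{2\lambda}]$, parameterized at the constant speed $\sqrt{2\lambda}$ that exactly matches the energy level. By Lemma \ref{lem3-4}, $(x+sa+\tau b)^\flat\ge(x+sa)^\flat\ge 2$ for all $\tau\ge 0$, so $\eta\subset\Omega$ is admissible. Using $\tfrac12|\dot\eta|^2\equiv\lambda$, a direct computation yields
\[
\frac{1}{\sqrt{2\lambda}}A_\lambda(\eta)=T+\frac{1}{2\lambda}\int_0^T F(x+sa+\tau b)\,d\tau.
\]
The choice of precisely the speed $\sqrt{2\lambda}$ is essential: any other constant speed would leave a term of the form $(\tfrac12+\lambda)T/\sqrt{2\lambda}$ in place of $T$, which would break the linear-in-$T$ leading term of the claimed bound.

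It remains to estimate $\int_0^T F(x+sa+\tau b)\,d\tau$. Since all pairwise distances along $\eta$ exceed $2$, the growth hypothesis \eqref{sapa1} gives $F(x+sa+\tau b)\le\sum_{i<j}f(|q_{ij}+\tau e_{ij}|)$, with $q_{ij}:=(x+sa)_i-(x+sa)_j$ and $e_{ij}:=b_i-b_j$. For each pair set $u(\tau):=|q_{ij}+\tau e_{ij}|$, so that $du/d\tau=\cos\angle(q_{ij}+\tau e_{ij},\,e_{ij})\,|e_{ij}|$. The angle estimate established inside the proof of Lemma \ref{lem3-4} gives $\cos\angle(q_{ij}+\tau e_{ij},\,e_{ij})\ge 5/9$ uniformly for $\tau\ge 0$, while Lemma \ref{angle} yields $|e_{ij}|\ge b^\flat\ge 9a^\flat/10$; together, $du/d\tau\ge a^\flat/2$, equivalently $d\tau\le(2/a^\flat)\,du$. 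Using $|q_{ij}|\ge(x+sa)^\flat$, $|q_{ij}+Te_{ij}|\le 2|x+sa|+2T$, and $f\ge 0$, the change of variable produces
\[
\int_0^T f(|q_{ij}+\tau e_{ij}|)\,d\tau\le \frac{2}{a^\flat}\int_{(x+sa)^\flat}^{2T+2|x+sa|}f(u)\,du,
\]
and summing over the $\binom{N}{2}<N^2/2$ pairs (and then using $1/(2\lambda)\le 1/\lambda$) completes the proof.

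The only technically delicate step is verifying the monotonicity $du/d\tau\ge a^\flat/2$ uniformly in $\tau\ge 0$; but the required angle lower bound is already embedded in the proof of Lemma \ref{lem3-4}, so beyond isolating that ingredient the argument is essentially a bookkeeping exercise.
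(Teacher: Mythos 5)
Your proof is correct and follows essentially the same route as the paper's: triangle inequality in $(\wz\Omega,m_\lz)$, testing $m_\lz(x+sa,x+sa+Tb)$ with the speed-$\sqrt{2\lz}$ straight segment (this is exactly Lemma \ref{klem}), and a change of variable to bound $\int_0^T F(x+sa+\tau b)\,d\tau$ (this is the content of Lemma \ref{xast2}/Corollaries \ref{zabf}--\ref{zabf-2}). The one small divergence is in the change-of-variable step: you use the uniform-in-$\tau$ angle bound $\cos\angle(q_{ij}+\tau e_{ij},e_{ij})\ge 5/9$ extracted from the proof of Lemma \ref{lem3-4}, whereas the paper's Lemma \ref{xast2} needs only the angle bound at $\tau=0$ and derives the monotonicity of $u(\tau)$ from the elementary identity $\frac{du}{d\tau}=\frac{\langle q,e\rangle+|e|^2\tau}{|q+\tau e|}\ge|e|\cos\theta$; both yield the same constant up to bookkeeping.
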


To prove this we begin with the  following trivial upper bound.
\begin{lem}  \label{klem}
If
 $\gz\in \mathcal {AC}(x,y;[0,\sz];\wz\Omega)$  is an  $m_\lz$-geodesic with canonical parameter for some $ \lz>0$,
 then  one has
   \begin{equation}\label{simple}
   l(\gz )\le \frac1{\sqrt{2\lz}} m_{\lz}(x,y)\le   |y-x|+ \frac1{2\lz}\int_{0}^{|y-x|}F(x+s\frac{y-x}{|y-x|} )\,ds.
\end{equation}
 \end{lem}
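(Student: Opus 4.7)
The plan is to establish the two inequalities separately, the first via energy conservation along the canonical-parameter geodesic, and the second by testing the action functional against a constant-speed straight-line segment and optimizing the speed.

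For the first inequality $l(\gz)\le \frac1{\sqrt{2\lz}}m_\lz(x,y)$, I will invoke the energy identity for $m_\lz$-geodesics (Lemma A.3 in the Appendix), which gives $|\dot\gz(s)|^2=2F\circ\gz(s)+2\lz$ for a.e.\ $s\in[0,\sz]$. Substituting this into the integrand of $A_\lz$ yields
\[ \tfrac12|\dot\gz(s)|^2+F\circ\gz(s)+\lz \;=\; |\dot\gz(s)|^2 \;\ge\; \sqrt{2\lz}\,|\dot\gz(s)|, \]
since $|\dot\gz|\ge\sqrt{2\lz}$. Integrating over $[0,\sz]$ and using $A_\lz(\gz)=m_\lz(x,y)$ (as $\gz$ is a minimizer) directly produces the claimed bound.

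For the upper bound on $m_\lz(x,y)$, I will use the straight-line competitor. If the integral $I:=\int_0^{|y-x|}F(x+s\tfrac{y-x}{|y-x|})\,ds$ is infinite, the inequality is trivial, so assume $I<\fz$. For each $v>0$, define $\eta_v(t)=x+vt\tfrac{y-x}{|y-x|}$ on $[0,|y-x|/v]$; this is an admissible curve in $\mathcal{AC}(x,y;[0,|y-x|/v];\rr^{dN})$, as the infimum defining $m_\lz$ ranges over all AC curves in $\rr^{dN}$. A change of variables $s=vt$ gives
\[ A_\lz(\eta_v) \;=\; \tfrac12 v|y-x| \;+\; \tfrac{1}{v}\bigl(\lz|y-x|+I\bigr). \]
By AM--GM, this is minimized at $v^2=2\lz+2I/|y-x|$, with minimum value $|y-x|\sqrt{2\lz+2I/|y-x|}$. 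Hence
\[ m_\lz(x,y) \;\le\; |y-x|\sqrt{2\lz+\tfrac{2I}{|y-x|}}, \]
and dividing by $\sqrt{2\lz}$ and applying the elementary inequality $\sqrt{1+u}\le 1+u/2$ for $u\ge 0$ to $u=I/(\lz|y-x|)$ yields the stated bound
\[ \tfrac{1}{\sqrt{2\lz}}m_\lz(x,y) \;\le\; |y-x|\Bigl(1+\tfrac{I}{2\lz|y-x|}\Bigr) \;=\; |y-x|+\tfrac{1}{2\lz}I. \]

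There is no essential obstacle here: the main subtlety is simply to notice that the straight-line test curve is always a legitimate competitor in the definition of $m_\lz$ (since $F$ is extended to $[0,\fz]$ on all of $\rr^{dN}$), and the two key inputs--the energy identity for canonical-parameter geodesics on one side, and the AM--GM optimization followed by the concavity estimate $\sqrt{1+u}\le 1+u/2$ on the other--fit together cleanly.
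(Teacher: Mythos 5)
Your proof is correct, but it differs from the paper's in two respects that are worth noting. For the first inequality, the paper uses the purely elementary Cauchy--Schwarz bound $\tfrac12|\dot\gz|^2+F\circ\gz+\lz\ge\tfrac12|\dot\gz|^2+\lz\ge\sqrt{2\lz}\,|\dot\gz|$, which holds for \emph{any} absolutely continuous curve and needs nothing about $\gz$ beyond $A_\lz(\gz)=m_\lz(x,y)$. You instead invoke the energy identity $|\dot\gz|^2=2F\circ\gz+2\lz$ from Lemma A.3 to rewrite the integrand as $|\dot\gz|^2$ before bounding below by $\sqrt{2\lz}|\dot\gz|$; this is valid since $\gz$ is by hypothesis a canonical-parameter geodesic, but it imports the whole variational machinery of the appendix where a one-line AM--GM would do, so the paper's version is the leaner one. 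For the second inequality, the paper simply tests against the constant-speed-$\sqrt{2\lz}$ straight segment and reads off the bound directly, whereas you test against the full one-parameter family $\eta_v$, optimize the speed via AM--GM to get the sharper intermediate estimate $m_\lz(x,y)\le|y-x|\sqrt{2\lz+2I/|y-x|}$, and then relax via $\sqrt{1+u}\le1+u/2$. This buys a momentarily tighter estimate (and explains why $\sqrt{2\lz}$ is nearly the optimal speed) at the cost of an extra step; since the lemma only claims the relaxed form, the paper's choice of $v=\sqrt{2\lz}$ up front is the more direct path. Your observation that the straight segment is always a legitimate competitor because $F$ is extended to $[0,\fz]$ on all of $\rr^{dN}$, and that the bound is trivial when the line crosses $\Sigma$ with $I=\fz$, matches the paper's setup.
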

\begin{proof} {\color{black} Concerning the the first inequality} in \eqref{simple}, by the Cauchy-Schwarz inequality one has
$$ \frac 12  |\dot \gz (s)|^2+ F(\gz(s))+\lz\ge \frac 12  |\dot \gz (s)|^2+ \lz
\ge 2\sqrt {\lz} \sqrt{\frac 12  |\dot \gz (s)|^2} =
\sqrt{2\lz }     |\dot \gz (s) |,\quad a.\,e.$$
 Thus
 $${m_\lz(x,y)}= A_\lz(\gz ) \ge \sqrt{2\lz} l(\gz) .$$

To see the second inequality in \eqref{simple}, we set $a= \frac{y-x}{|y-x|}$ and $\eta(s)=x+\sqrt {2\lz}s  a$ for $s\in[0,\frac{|y-x|}{\sqrt {2\lz}}]$.
 By definition, ${m_\lz(x,y)}\le A_\lz(\eta)$. Since
\begin{align*}
  A_\lz(\eta)&=\int_0^{\frac{|y-x|}{\sqrt{2\lz}}}
[\lz+F(x+\sqrt {2\lz}s  a)]\,ds+\lz  \frac{|y-x|}{\sqrt{2\lz}}= \sqrt{2\lz}|y-x|+ \frac1{\sqrt{2\lz}}\int_{0}^{|y-x|}F(x+sa)\,ds,
\end{align*}
we obtained the second inequality in \eqref{simple} as desired.
\end{proof}

\begin{rem}\rm
Note that above  if $[x,y]\subset\Omega$, then $\int_{0}^{|y-x|}F(x+sa)\,ds<\fz$; otherwise, it may happen that
$\int_{0}^{|y-x|}F(x+sa)\,ds=\fz$. Recall that no assumption is made on the behavior of $F$ in $\Sigma$ essentially.
Below  via some necessary geometric argument in Section 3, we could only meet the the integral {\color{black} of type $\int_{0}^{A}F(y+sb)\,ds$ where $A>0, y \in \Omega, b \in \mathbb S^{dN-1} \cap \Omega$ and the line-segment $[y,y+Ab] \Subset \Omega$.}
\end{rem}

Next we bound   $\int_{0}^TF(z+sa)\,ds$ for some $z$ and $a$ with  $\overline{z+\rr_+a}\subset\Omega$.
\begin{lem}  \label{xast2}
Let  $ \theta\in(0,\pi/2)$,
 $z\in \Omega$ with $z^\flat\ge 2$ and $a\in \mathbb S^{dN-1} $ with $a^\flat>0$.
If  $$\sup_{1\le i<j\le N}\angle (a_i-a_j,z_i-z_j)\le \theta, 
$$
then \begin{equation}\label{h1}
\int_{0}^TF(z+sa)\,ds \le   \frac{N^2}{2 \cos\theta}
 \frac1{a^\flat  } \int_{z^\flat}^{2T+2|z|} f(s)  \, ds
\end{equation}
{\color{black} holds for any $T>0$}.
\end{lem}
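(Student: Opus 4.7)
The plan is to reduce the integral over the sum potential to pairwise integrals, and for each pair $(i,j)$ exploit the angle hypothesis to perform a change of variable along the straight line $s \mapsto z+sa$.

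First I would split $F(z+sa) = \sum_{1\le i<j\le N} F_{ij}(z_i+sa_i,\, z_j+sa_j)$ and for each pair $(i,j)$ set $q_{ij}:=z_i-z_j$ and $e_{ij}:=a_i-a_j$, so that the pairwise distance along the segment is $r_{ij}(s):=|q_{ij}+s e_{ij}|$. Two elementary facts follow from the hypotheses: (a) $|q_{ij}|\ge z^\flat\ge 2$ and $|e_{ij}|\ge a^\flat$; (b) the angle hypothesis $\angle(q_{ij},e_{ij})\le \theta<\pi/2$ gives $\langle q_{ij},e_{ij}\rangle\ge 0$, whence
\[
r_{ij}(s)^2=|q_{ij}|^2+2s\langle q_{ij},e_{ij}\rangle+s^2|e_{ij}|^2\ge |q_{ij}|^2\ge 4,\qquad\forall s\ge 0.
\]
So the pair $(z_i+sa_i, z_j+sa_j)$ lies outside $\Delta_2$ for every $s\in[0,T]$, and assumption \eqref{sapa1} yields $F_{ij}(z_i+sa_i,z_j+sa_j)\le f(r_{ij}(s))$.

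Next I would estimate $r_{ij}'(s)$ from below. Differentiating,
\[
r_{ij}'(s)=\frac{\langle q_{ij}+s e_{ij},\,e_{ij}\rangle}{|q_{ij}+s e_{ij}|}=|e_{ij}|\cos\angle(q_{ij}+s e_{ij},\,e_{ij}).
\]
Since $\langle q_{ij},e_{ij}\rangle\ge 0$ and $s\ge 0$, the vector $q_{ij}+s e_{ij}$ lies in the closed half-plane spanned by $q_{ij}$ and $e_{ij}$ and rotates monotonically toward the direction of $e_{ij}$ as $s$ increases; consequently $\angle(q_{ij}+s e_{ij},\,e_{ij})\le \angle(q_{ij},e_{ij})\le\theta$. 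Therefore $r_{ij}'(s)\ge|e_{ij}|\cos\theta\ge a^\flat\cos\theta>0$, and in particular $r_{ij}$ is a strictly increasing diffeomorphism on $[0,T]$.

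Then, changing variables $r=r_{ij}(s)$,
\[
\int_0^T f(r_{ij}(s))\,ds \le \frac{1}{a^\flat\cos\theta}\int_{r_{ij}(0)}^{r_{ij}(T)} f(r)\,dr \le \frac{1}{a^\flat\cos\theta}\int_{z^\flat}^{2|z|+2T} f(r)\,dr,
\]
using $r_{ij}(0)=|q_{ij}|\ge z^\flat$ and, since $|z_i|,|z_j|\le |z|$ and $|a_i|,|a_j|\le 1$, the crude upper bound $r_{ij}(T)\le |q_{ij}|+T|e_{ij}|\le 2|z|+2T$. Summing over the $\binom{N}{2}\le N^2/2$ pairs gives precisely \eqref{h1}. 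No step is really hard; the only subtle point is the monotonicity argument justifying $\angle(q_{ij}+se_{ij},e_{ij})\le\theta$, which I would verify by a short direct computation with $\cos\angle(\cdot,e_{ij})$.
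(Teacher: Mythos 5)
Your proof is correct and follows essentially the same approach as the paper: decompose $F$ pairwise, bound each pair by $f(r_{ij}(s))$ using the non-collision estimate $r_{ij}(s)\ge|q_{ij}|\ge 2$, establish $r_{ij}'(s)\ge|e_{ij}|\cos\theta$, and change variables. The one stylistic difference is that you justify the derivative bound by observing that $\angle(q_{ij}+se_{ij},e_{ij})$ is non-increasing in $s\ge 0$, while the paper instead runs a short chain of algebraic inequalities showing $\langle q+se,e\rangle\ge|e|\cos\theta\,(|q|+s|e|)\ge|e|\cos\theta\,|q+se|$; these are equivalent, and your geometric phrasing is fine provided you carry out the computation you indicate.
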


\begin{proof} By Lemma \ref{xast2a},    one has
    $ (z+sa)^\flat\ge z^\flat\ge 2$ for each $s\ge0$.
By \eqref{F} and \eqref{sapa1}, we  then obtain
\begin{align*}
 \int_{0}^TF(z+sa)\,ds & \le \sum_{1\le i<j\le N} \int_{0}^T F_{ij}  ( z_i+sa_i,z_j+sa_j )  \, ds\\
& \le
 \sum_{1\le i<j\le N} \int_{0}^T f(|(z_i-z_j)+s(a_i-a_j)|)  \, ds.
   \end{align*}
We claim that,  for any $1\le i<j\le N$,   \begin{equation}\label{claimxx1}\int_{0}^T f(|(z_i-z_j)+s(a_i-a_j)|)  \, ds\le \frac1{\cos\theta}  \frac1{|a_i-a_j| } \int_{|z_i-z_j|}^{| z_i-z_j+T (a_i-a_j)|} f(t)  \, dt.
\end{equation}
Assume this claim holds for the moment. Note that  $$|z_i-z_j|\ge  z^\flat\ge 2,\,\, |a_i-a_j|\ge a^\flat>0$$  and
$$
| z_i-z_j+T(a_i-a_j)|\le 2|z|+ T|a_i-a_j|\le  2|z
|+ 2T.$$
One has
\begin{align*}
  \int_{0}^TF(z+sa)\,ds & \le   \frac{N^2}{2 \cos\theta}
 \frac1{a^\flat  } \int_{z^\flat}^{2T+2|z|} f(t)  \, dt,
   \end{align*}
that is,  \eqref{h1} holds.

Finally, we prove the above claim \eqref{claimxx1}.
For   any fixed
 $1\le i<j\le N$,  write $q=z_i-z_j$ and $e=a_i-a_j$,
and hence
$$\int_{0}^T f(|(z_i-s_j)+s(a_i-a_j)|)  \, dt= \int_{0}^{T} f(|q+se |)\,dt.$$
Write  $t(s) = |q+se | $ for $s\ge 0$.  Simce the assumption  $\angle (q,e)\le \theta<\pi/2$ implies
  $\langle q,e\rangle\ge |q||e|\cos\theta>0$, we have
$$t(s)=\sqrt{\langle q+se ,q+se \rangle}=\sqrt{|q|^2+2\langle q,e\rangle s+|e|^2s^2} \ge
|q| \ge 2, \quad \forall s\ge 0,$$
and $$  \frac{dt}{ds}=
  \frac{\langle   q+se ,e\rangle }{|q+se  |} \ge  \frac{|e||q|\cos\theta+|e|^2s} {|q + se |} \ge |e|\cos\theta\frac{ |q|+ {|e|s}  } {|q + se |} \ge |e|\cos\theta
  >0,  \ \forall s\ge 0 .$$
Thus $t=t(s):[0, \fz) \to [|q|,\fz)$ is a strictly increasing $C^1$ function.   Obviously,
 $$ \frac{ds} {dt}\le \frac1{|e| \cos\theta}. $$
And hence, by the change of variable, we have
$$    \int_{0}^{T} f(|q+se|)\,ds \le  \frac1{\cos\theta}  \frac1{|e|}\int_{|q |}^{|q+Te|}  f(t)      \,dt  $$
as desired.
\end{proof}

As a consequence of Lemma \ref{xast2} and Lemma 3.1, we have the following.
\begin{cor}\label{zabf}
Let $z\in \rr^{dN}$ with $z^\flat\ge 2$ and $a\in \mathbb S^{dN-1} $ with $a^\flat>0$.
If  $$|\frac z{|z|}-a|\le \frac {a^\flat}{20},$$
then for any $b\in \bs^{dN-1}$ with $|a-b|\le \frac{a^\flat}{20}$, we have
\begin{equation*}
\int_{0}^TF(z +sb)\,ds\le   N^2
 \frac2{a^\flat  } \int_{z^\flat}^{2T+2|z|} f(s)  \, ds,
 \quad \, \forall T >0.
\end{equation*}
\end{cor}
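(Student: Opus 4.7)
The plan is to reduce Corollary \ref{zabf} to Lemma \ref{xast2} applied with the direction $b$ in place of $a$. The main task is therefore to verify Lemma \ref{xast2}'s two hypotheses in this setting: (a) a uniform bound $\theta<\pi/2$ on $\sup_{i<j}\angle(b_i-b_j,\,z_i-z_j)$, and (b) a lower bound on $b^\flat$ in terms of $a^\flat$, so that the final constant only depends on $a^\flat$.

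For (b), the hypothesis $|a-b|\le a^\flat/20$ together with Lemma \ref{angle} (applied with $\delta=1/20$) immediately give $b^\flat\ge(9/10)a^\flat$. For (a), the trick is to insert the unit configuration $c:=z/|z|\in\bs^{dN-1}$; one cannot apply Lemma \ref{angle} directly to the pair $(z,b)$ because $z$ is not a unit vector. Since $|a-c|\le a^\flat/20$, Lemma \ref{angle} (again with $\delta=1/20$) yields $c^\flat\ge(9/10)a^\flat$. The triangle inequality then produces
$$|c-b|\le|c-a|+|a-b|\le a^\flat/10\le c^\flat/9,$$
so a second application of Lemma \ref{angle} to the pair $(c,b)$ with $\delta=1/9$ delivers
$$\cos\angle(c_i-c_j,\,b_i-b_j)\ge 1-4/9=5/9,\qquad\forall\,1\le i<j\le N.$$
Because $z_i-z_j=|z|(c_i-c_j)$, the angle $\angle(z_i-z_j,\,b_i-b_j)$ coincides with $\angle(c_i-c_j,\,b_i-b_j)$, so I may set $\theta:=\arccos(5/9)<\pi/2$, uniformly in $i<j$.

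With (a) and (b) in hand, Lemma \ref{xast2} applied to the direction $b$ (the hypotheses $z^\flat\ge 2$ and $b^\flat>0$ are satisfied, and the angle bound is the one just verified) gives
$$\int_0^T F(z+sb)\,ds\le\frac{N^2}{2\cos\theta}\cdot\frac{1}{b^\flat}\int_{z^\flat}^{2T+2|z|}f(s)\,ds.$$
Plugging in $\cos\theta=5/9$ and $b^\flat\ge 9a^\flat/10$ bounds the prefactor by $N^2/a^\flat$, which is certainly $\le 2N^2/a^\flat$, and the claim follows.

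There is no serious obstacle: conceptually, the corollary is simply the statement that the angle condition in Lemma \ref{xast2} remains bounded away from $\pi/2$ under a small perturbation of the shooting direction away from the asymptotic direction $z/|z|$. The only point requiring care is passing through the intermediate unit vector $c=z/|z|$ so that Lemma \ref{angle}'s smallness condition $\delta<1/5$ is met at each invocation.
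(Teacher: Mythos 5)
Your proof is correct and follows essentially the same route as the paper: bound $b^\flat$ from below via Lemma \ref{angle}, pass to the unit vector $z/|z|$ to control $\sup_{i<j}\angle(b_i-b_j,z_i-z_j)$ with a second application of Lemma \ref{angle}, and then invoke Lemma \ref{xast2}. The only cosmetic differences are which vector plays the role of ``$a$'' in the second application of Lemma \ref{angle} (you use $c=z/|z|$; the paper uses $b$) and the choice of $\theta$ (you keep $\cos\theta=5/9$; the paper relaxes to $\cos\theta=1/2$), both of which lead to the same final bound.
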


\begin{proof} Since $b\in \bs^{dN-1}$  and $|a-b|\le \frac{a^\flat}{20}$, by Lemma \ref{angle}, one has  $b^\flat> \frac 9{10} a^\flat $.
It follows that
$$|\frac z{|z|}-b|\le  |\frac z{|z|}-a| +|a-b|\le\frac {a^\flat}{10}\le \frac {b^\flat }9.$$
By Lemma \ref{angle}, it holds that
$$ \cos \angle (b_i-b_j,z_i-z_j)=\cos \angle (b_i-b_j,\frac{z_i}{|z|}-\frac{z_j}{|z|}) \ge 1-\frac 49\ge \frac12.$$
Thus by applying Lemma \ref{xast2} with $\theta=\arccos\frac12$ and $b^\flat> \frac 9{10} a^\flat $, we get the desired estimate.
\end{proof}

As a consequence of Corollary \ref{zabf} and Lemma \ref{findxast}, the following result holds:

\begin{cor}\label{zabf-2}
Let $x\in \rr^{dN}$  and $a\in \mathbb S^{dN-1} $ with $a^\flat>0$.
If  {\color{black} $s \ge 50\frac{1+|x|}{a^\flat}$},
then for any $b\in \bs^{dN-1}$ with $|a-b|\le \frac{a^\flat}{20}$, we have
\begin{equation}\label{h3}
\int_{0}^TF(x+sa +tb)\,dt\le   N^2
 \frac2{a^\flat  } \int_{(x+sa)^\flat}^{2S+2|x+sa|} f(t)  \, dt,
 \quad\forall T>0.
\end{equation}
\end{cor}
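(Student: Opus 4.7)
The plan is to reduce Corollary \ref{zabf-2} directly to Corollary \ref{zabf} by setting $z := x + sa$ and verifying the two hypotheses of Corollary \ref{zabf} via Lemma \ref{findxast}. Once this substitution is made, the integrand rewrites as $F(x+sa+tb) = F(z+tb)$, so the left-hand side of \eqref{h3} is precisely the quantity controlled in Corollary \ref{zabf} (with the integration variable $s$ there playing the role of our $t$).

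The key step is the verification of the two conditions on $z$. The assumption $s \ge 50\frac{1+|x|}{a^\flat}$ is exactly the hypothesis of Lemma \ref{findxast}, which yields simultaneously $(x+sa)^\flat \ge \frac{24}{25}\, s\, a^\flat > 2$ and $\bigl|\frac{x+sa}{|x+sa|} - a\bigr| \le \frac{a^\flat}{20}$. These are precisely $z^\flat \ge 2$ and $|z/|z| - a| \le a^\flat/20$. The hypothesis on $b$ is unchanged.

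Applying Corollary \ref{zabf} with this $z$ then gives
\[
\int_0^T F(z+tb)\,dt \le N^2 \frac{2}{a^\flat} \int_{z^\flat}^{2T+2|z|} f(u)\,du,
\]
and substituting back $z = x+sa$ produces the stated bound in \eqref{h3} (assuming the $S$ in the upper limit is a typo for $T$). There is no real obstacle here: the statement is a change-of-notation corollary whose geometric content has already been absorbed into Lemma \ref{findxast}, and all remaining work is purely bookkeeping.
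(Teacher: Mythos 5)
Your proof is correct and is essentially identical to the paper's: both set $z = x+sa$, invoke Lemma \ref{findxast} to check that $z^\flat\ge 2$ and $|z/|z|-a|\le a^\flat/20$, and then apply Corollary \ref{zabf}. You also correctly spot that the $S$ in the upper integration limit of \eqref{h3} should read $T$.
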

\begin{proof} Given  any $s \ge 50\frac{1+|x|}{a^\flat}$, let $z=x+sa $.
By Lemma \ref{findxast}, we know that $z^\flat\ge 2$ and
$|\frac{z}{|z|}-a|\le \frac{a^\flat}{20}$.
Then Corollary \ref{zabf} implies inequality \eqref{h3}.
\end{proof}

We now apply Corollary \ref{zabf-2} to prove Lemma \ref{uppm}.
\begin{proof} [Proof of Lemma \ref{uppm}]

By Lemma {\color{black}\ref{lem3-4}}, for any $T\ge 0$, one has  $$(x+sa +Tb)^\flat \ge (x+sa)^\flat\ge  sa^\flat-2|x| >48|x|\ge0. $$
It follows that $ x+sa+Tb  \in \Omega$. Thus $m_\lz (x,x+sa +Tb) <\fz$.

For any  $m_\lz$-geodesic
  $\gz:[0,\sz]\to\wz\Omega$  joining $x$ and    $x+sa+ Tb$, it holds that
$$l(\gz ) \le \frac{1}{\sqrt{2\lz}}m_{\lz}(x,x+sa+ Tb).$$
Note that
$$m_{\lz}(x,x+sa+ Tb) \le m_{\lz}(x,x+sa )+
  m_{\lz}(x+sa ,x+sa+ Tb).$$
By Lemma \ref{klem}, one has
$$  m_{\lz}(x+sa ,x+sa+ Tb)  \le  \sqrt{2\lz} T + \frac{1}{\sqrt{2\lz}} \int_{0}^TF(x+sa   +tb)\,dt .$$
By Corollary \ref{zabf-2}, it holds that
$$\int_{0}^TF(x+sa+tb)\,dt \le 2N^2\frac1{a^\flat}\int_{(x+sa)^\flat}^{2T+2| x+sa |} f(t)  \, dt ,$$
and hence
$$  m_{\lz}( x+sa , x+sa + Tb)  \le     \sqrt{2\lz} T+  \sqrt{\frac{2}{ \lz}}    N^2
 \frac1{a^\flat  } \int_{(x+sa)^\flat}^{2T+2| x+sa |} f(t)  \, dt.$$
We therefore obtain \eqref{eqk1}.
\end{proof}

\section{A key geometric  observation for $m_\lz$-geodesics}

To state our geometric observation, we first {\color{black}introduce some conventions}.

Given  $x\in \wz \Omega$ and
 $a\in \bs^{dN-1}$ with $a^\flat>0$, we always write
$$x^\ast:= x+   ( 50\frac{1+|x|}{  a^\flat })  a.$$
By  Lemma \ref{lem3-4}, one has
$$\mbox{  $(x^\ast +Tb)^\flat  \ge    2$, and hence {\color{black}$x^\ast+Tb\in \Omega$},
$ \forall T\ge0 $ and  $b\in\mathbb S^{dN-1}$ with $|a-b|\le \frac{a^\flat}{20}$. }$$


Given any $\lz>0$, we always  set
\begin{equation}\label{psi1}
\Psi(T)={\color{black}\Psi_{\lz,x,a}}(T):=\frac{1}{\sqrt{2\lz}}
m_{\lz}(x,x^\ast)+ \frac{1}{ \lz }  N^2
 \frac1{a^\flat  } \int_{(x^\ast)^\flat}^{2T+2|x^\ast|} f(t)  \, dt,  \quad\forall T\ge 0.
\end{equation}
Note that the  assumption \eqref{sapa1} guarantees
\begin{align}\label{size}
\sum_{j= 1}^\fz\sqrt{ 2^{-j}\Psi(2^{ j}) }<\fz.
\end{align}
Indeed, let $\xi \in \nn$ such that $ 2^{\xi-1} \le |x^\ast| < 2^{\xi}$.
{\color{black}By \eqref{psi1}, we have
\begin{align*}
\sum_{j= 1}^\fz\sqrt{ 2^{-j}\Psi(2^{ j}) }&\le \frac{1}{(2\lz)^{1/4}}\sum_{j\ge1}\sqrt{2^{-j}m_\lz(x,x^\ast)}+
\frac{N}{(\lz a^\flat)^{1/2}}\sum_{j\ge1}\sqrt{2^{-j}\int_{(x^\ast)^\flat}^{2^{j+1 }+2|x^\ast|  }f(t)\,dt}\\
&\le \frac{3}{(2\lz)^{1/4}}\sqrt{ m_\lz(x,x^\ast)}+ \frac{N}{(\lz a^\flat)^{1/2}} \sum_{j\ge1}\sqrt{2^{-j}\int_{1}^{2^{j+{ \xi} +2}  }f(t)\,dt}
\end{align*}
where in the last inequality we use the facts that $|x^\ast| < 2^{\xi} $ and $2^a+2^b \le 2^{a+b}$ whenever $a \ge 1$ and $b \ge 1$.}
Note that
{\color{black}
\begin{align*}
\sum_{j\ge 1}\sqrt{2^{-j}\int_1^{2^{j+{ \xi}+2}}f(t)\,dt}
&\le \sum_{j\ge 1}\sum_{k=0}^{j+{ \xi}+1} 2^{-j/2} \sqrt{ \int_{2^k}^{2^{k+1}}f(t)\,dt}
\le { C(x^\ast)}  \sum_{k\ge 0}\sqrt{2^{-k}\int_{2^k}^{2^{k+1}}f(t)\,dt}
\end{align*}}
for some positive constant $C(x^\ast)$. {\color{black} Thanks to \eqref{sapa1}, the right-hand side of the last inequality is
finite.} Thus \eqref{size} holds.

We further write {\color{black}
$$\wz \Psi(t): =\sum_{j\ge \lfloor \log_2 t \rfloor +1}\sqrt{2^{-j}\Psi(2^j)}<\fz,
\quad\forall \, t \ge 1 .
$$}
Then $\wz\Psi(t)$ is decreasing  to $0$ as $t\to\fz$. {Moreover,
\begin{equation}\label{rel}
  \sqrt{ t^{-1}\Psi(t)}\le \wz \Psi(t), \quad\forall t\ge 1.
\end{equation} \color{black}
To  see this, letting $2^k \le t < 2^{k+1}$ for some $k \in \nn$, we have
{\color{black}
$$  \wz \Psi(t) =\sum_{j\ge k+1} \sqrt{\frac{\Psi(2^j)}{2^{j}}} \ge \sqrt{\Psi(2^{k+1})} \sum_{j\ge k+1} \sqrt{\frac{1}{2^{j}}} \ge \sqrt{\Psi(2^{k+1})} \frac{\sqrt{2^{-(k+1)}}}{1-\sqrt{2}^{-1}} \ge \sqrt{  \frac{\Psi(2^{k+1})}{2^{k}}} \ge \sqrt{ \frac{\Psi(t)}{t}},$$}
where in the first and last ``$\ge$'' we use the fact that $\Psi(t)$ is increasing  with respect to $t$, and in the second last ``$\ge$'' we use the fact $(1-\sqrt{2}^{-1})^{-1} \ge \sqrt{2}$.

From \eqref{rel},
it follows that }
\begin{align}\label{n0}
n_0:=\min\left\{n: n\ge 20+\log_2 |x-x^\ast|, \, \wz \Psi(2^n)=\sum_{j= n}^\fz\sqrt{ 2^{-j}\Psi(2^{ j}) }\le 2^{-10}a^\flat\right\}<\fz.
\end{align}
Obviously, one has
\begin{align}\label{size2}\Psi(T)\le 2^{-20}  (a^\flat)^2 T, \quad\mbox{for all $T\ge 2^{n_0}$}.
\end{align}

We remark that with the aid of $x^\ast$ and $\Psi$, we  restate Lemma \ref{uppm}  with $s=     50\frac{1+|x|}{  a^\flat }   $ as below.

\noindent {\bf Lemma 4.1$^\ast$}. {\it Let $b\in \mathbb S^{dN-1}$ with $|a-b|\le \frac{ a^\flat}{20}$.
For any   $T>0$, and for any
   $m_\lz$-geodesic $\gz$ with canonical parameter joining $x$ and   $x^\ast+ Tb$, we have
\begin{equation}\label{eqk2}
  l(\gz ) \le \frac1{\sqrt{2\lz}}m_\lz(x,x^\ast+Tb) \le  T+ \Psi(T).
\end{equation}
}

We are ready to state our geometric observation as below, which plays a key role in the proof of Theorem 1.4.

\begin{lem}  \label{tan1}
Let $\gz\in \mathcal{AC}(x,x^\ast+2^na;[0,\sz];\wz\Omega)$ be an  $m_\lz$-geodesic with canonical parameter, where
  $n>n_0$.
Let $b\in \bs^{dN-1}$ with $|a-b|\le \frac{a^\flat}{10}$.
Suppose  that $ x^\ast +Sb= \gz ( \tau) $  for some  $S\ge 2^{n_0}$ and $ \tau\in (0,\sz]$.
Then   \begin{align} \label{pw1}
 |\gz (  \tau_{1/2}) -(x^\ast + \frac S2 b)| &\le 2 \sqrt{  S \Psi(S)} ,
\end{align}
where   $$ \tau_{1/2} :=\max\{0<t< \tau: |x^\ast-\gz (t)|=\frac S2\};$$
 and moreover,
\begin{equation}\label{zq2}
 d_E(\gz(t),  x^\ast+\rr_+b )
\le  4\sqrt {S \Psi(S)}, \quad\forall t\in[ \tau_{1/2},\tau].
\end{equation}
\end{lem}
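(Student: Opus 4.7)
\smallskip
\noindent\textbf{Proof plan.}
Write $A:=\gz(\tau_{1/2})$, $B:=\gz(\tau)=x^\ast+Sb$, and $M:=x^\ast+(S/2)b$. The plan is to apply Lemma~\ref{uppm} to the restriction $\gz|_{[0,\tau]}$, which is itself an $m_\lz$-geodesic joining $x$ and $x^\ast+Sb$; rewritten through the definition \eqref{psi1} of $\Psi$, it delivers the length budget
\[
l(\gz|_{[0,\tau]}) \le S+\Psi(S).
\]
Combining the two elementary lower bounds $l(\gz|_{[0,\tau_{1/2}]})\ge |x-A|$ and $l(\gz|_{[\tau_{1/2},\tau]})\ge |A-B|$, the easy inequality $|x-x^\ast|\le \tfrac{1}{\sqrt{2\lz}}m_\lz(x,x^\ast)\le \Psi(S)$, and $|x^\ast-A|=S/2$, one obtains
\[
|A-B|\le S+\Psi(S)-|x-A|\le S/2+2\Psi(S),\qquad |A-B|\ge |x^\ast-B|-|x^\ast-A|=S/2.
\]

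For part~(1), I would expand
$|A-M|^2 = |A-x^\ast|^2 - S\langle A-x^\ast,b\rangle + S^2/4 = S^2/2 - S\langle A-x^\ast,b\rangle$
and $|A-B|^2 = 5S^2/4 - 2S\langle A-x^\ast,b\rangle$, and eliminate $\langle A-x^\ast,b\rangle$ to reach the clean identity $|A-M|^2 = \tfrac12|A-B|^2 - \tfrac18 S^2$. Substituting the two-sided estimate on $|A-B|$ gives $|A-M|^2 \le S\Psi(S) + 2\Psi(S)^2$, and the uniform smallness $\Psi(S)\le 2^{-20}(a^\flat)^2 S$ from \eqref{size2} makes the second summand negligible; hence $|A-M|\le 2\sqrt{S\Psi(S)}$, which is exactly \eqref{pw1}.

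For part~(2), fix $t\in[\tau_{1/2},\tau]$ and write $Q:=\gz(t)$. The same budget on $[\tau_{1/2},\tau]$ forces $l(\gz|_{[\tau_{1/2},\tau]})\le S/2+2\Psi(S)$, so
\[
|Q-A|+|Q-B|\le |A-B|+2\Psi(S).
\]
Thus $Q$ lies in the prolate ellipsoid with foci $A,B$ and major-axis excess $2\Psi(S)$. A direct computation shows the Euclidean distance from $Q$ to the segment $[A,B]$ is bounded by the semi-minor axis $\sqrt{\Psi(S)(|A-B|+\Psi(S))}\le \sqrt{S\Psi(S)}$ (the ``tip'' overhangs of length $\Psi(S)$ past $A$ and $B$ contribute only $O(\Psi(S))$, smaller than $\sqrt{S\Psi(S)}$ by a further factor $\sqrt{\Psi(S)/S}$). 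On the other hand the function $y\mapsto d_E(y,x^\ast+\rr_+ b)$ is convex on $\rr^{dN}$ (since $x^\ast+\rr_+ b$ is convex), equals $0$ at $B$, and by part~(1) is at most $|A-M|\le 2\sqrt{S\Psi(S)}$ at $A$; hence $d_E(P,x^\ast+\rr_+ b)\le 2\sqrt{S\Psi(S)}$ for every $P\in[A,B]$. Choosing $P$ to be the closest point of $[A,B]$ to $Q$ and using the triangle inequality
\[
d_E(Q,x^\ast+\rr_+ b)\le d_E(Q,[A,B])+d_E(P,x^\ast+\rr_+ b)\le \sqrt{S\Psi(S)}+2\sqrt{S\Psi(S)}=3\sqrt{S\Psi(S)}\le 4\sqrt{S\Psi(S)}
\]
produces \eqref{zq2}.

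The main obstacle lies in the quantitative ellipse-vs-segment bookkeeping in part~(2): one must verify that (i) the semi-minor axis of the ellipsoid is genuinely of order $\sqrt{S\Psi(S)}$ (and not larger), and (ii) the length-$\Psi(S)$ overhangs of its major axis past $A$ and $B$ do not inflate $d_E(Q,[A,B])$ beyond $\sqrt{S\Psi(S)}$, even when the orthogonal projection of $Q$ onto the line through $A,B$ falls outside $[A,B]$. Both corrections sit at strictly smaller order than $\sqrt{S\Psi(S)}$, a fact forced by the uniform smallness $\Psi(S)/S\le 2^{-20}(a^\flat)^2$ from \eqref{size2}; this is exactly what preserves the crisp constants $2$ and $4$ appearing in \eqref{pw1}--\eqref{zq2}.
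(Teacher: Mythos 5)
Your proposal is correct and reaches the stated bounds, but it follows a genuinely different geometric route than the paper. For \eqref{pw1}, the paper introduces an auxiliary time $\hat\tau_{1/2}$ (the last time $\gamma$ lies on the perpendicular bisector of $[x^\ast,\gamma(\tau)]$), uses Pythagoras at that point to control $|\gamma(\hat\tau_{1/2})-(x^\ast+\frac S2 b)|$, and then separately bounds $|\gamma(\hat\tau_{1/2})-\gamma(\tau_{1/2})|\le 2\Psi(S)$ before concatenating via the triangle inequality. You bypass $\hat\tau_{1/2}$ entirely by invoking the median-length identity $|A-M|^2=\tfrac12|A-B|^2-\tfrac18 S^2$ (since $|A-x^\ast|=S/2$ and $|x^\ast-B|=S$), which reduces the whole estimate to the single length budget $|A-B|\le S/2+2\Psi(S)$; this is shorter and more transparent. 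For \eqref{zq2}, the paper projects $\gamma(t)$ onto the chord $[\gamma(\tau_{1/2}),\gamma(\tau)]$ and runs a three-case argument (foot interior versus at an endpoint) with law-of-cosines bookkeeping; you instead observe that $\gamma(t)$ lies in the prolate ellipsoid with foci $A,B$ and major-axis excess $2\Psi(S)$, bound the deviation from the chord by the semi-minor axis (checking that the tip overhangs past the foci contribute only $O(\Psi(S))\ll\sqrt{S\Psi(S)}$), and then use convexity of $y\mapsto d_E(y,x^\ast+\rr_+b)$ along $[A,B]$, which avoids the case split and even yields the sharper constant $3$. The paper's route is more hands-on and elementary; yours trades a bit of trigonometric bookkeeping for two standard convexity facts (the median identity and convexity of distance-to-a-convex-set). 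Your one loose spot is the "direct computation" for the ellipsoid overhang; to make it airtight you should record that when the foot of the perpendicular from $Q$ onto the line $AB$ falls past $B$ one has $d_E(Q,[A,B])=|Q-B|\le (|Q-A|+|Q-B|)-|A-B|\le 2\Psi(S)\le 2^{-8}\sqrt{S\Psi(S)}$, exactly parallel to \eqref{pq1} in the paper, but this is routine and the smallness \eqref{size2} forces it through.
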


We first prove \eqref{pw1} in Lemma \ref{tan1}.

\begin{proof} [Proof of \eqref{pw1} in Lemma \ref{tan1}]
 Let $$\hat  \tau_{1/2}:= \max\{0<t< \tau: |x^\ast-\gz (t)|=|\gz  (t)-(x^\ast+bS)|\}.$$
Observing   $|\gz(\hat\tau_{1/2})-x^\ast|\ge \frac S2$, we have $\tau_{1/2}\le \hat\tau_{1/2}$ (See Figure \ref{f1} for an illustration).
By \eqref{size2},
\begin{equation}\label{psiss}
\Psi(S)\le 2^{-20}  (a^\flat)^2 S \le 2^{-18} S.
\end{equation}
To get \eqref{pw1}, it suffices to prove that
{\color{black}\begin{align} \label{pw1-1}
|\gz (\hat \tau_{1/2}) -(x^\ast +b\frac S2)|
&\le (1+ 2^{-19}) \sqrt{ S\Psi(S)  }
\end{align}}
and
\begin{align}\label{pq1}
|\gz (\hat \tau_{1/2})-\gz ( \tau_{1/2})|&\le 2\Psi(S) \le 2^{-8} \sqrt{ S\Psi(S)}.
\end{align}

{\color{black}Now}, we prove the above two inequalities: \eqref{pw1-1} and \eqref{pq1}. As in Figure \ref{f1}, we write
$$\mbox{$y=x^\ast + Sb$, \,  \, $w =x^\ast +\frac S2 b $,\, \,
  $p=\gz (\hat  \tau_{1/2})$ \, and \, $q= \gz (   \tau_{1/2})$.}$$
Note that  $w$ is  the middle point of the straight line segment $[x^\ast, y]$. One has $$|x^\ast-p|=|p-y|>\frac S2
 \ \mbox{
and} \ |x^\ast-w|=|w-y|=\frac12|x^\ast-y|=\frac S2.$$  Obviously, $$\la p-w,x^\ast-y\ra=\la  p-w,b\rangle =0.$$
\vspace*{7pt}
\begin{figure}[h]
\centering
\includegraphics[width=12cm]{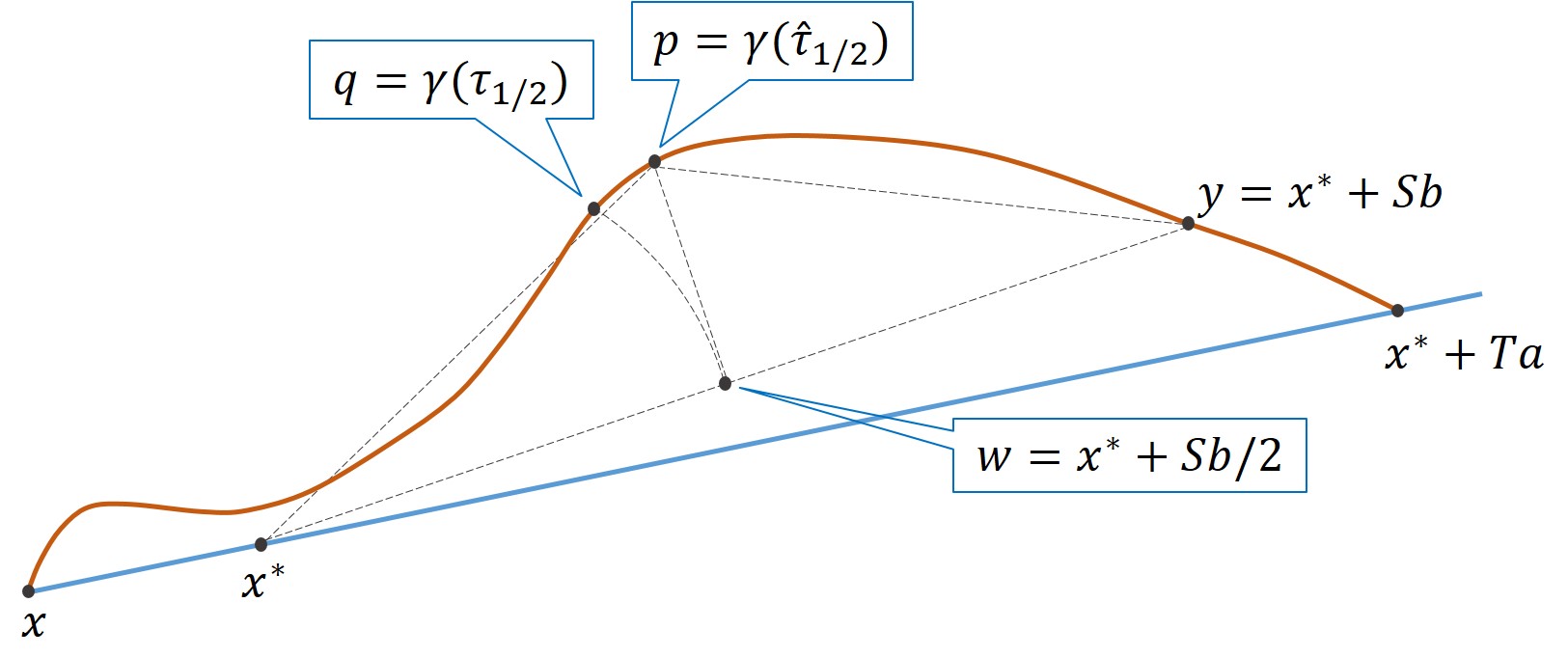}
\caption  {The distance between $p$ and $w$.}
\label{f1}
\end{figure}

{\it Proof of \eqref{pw1-1}.}
Note that
\begin{align*}|x^\ast- p|&=\frac12 \left[
 |x^\ast- p|+| p-y|\right]  \le \frac12 \left[
 |x- p|+| p-y|+ |x-x^\ast| \right].
\end{align*}
Since  $|x- p|\le l(\gz |_{[0,\hat  \tau_{1/2}]} )$ and $| p-y|\le l(\gz |_{[\hat  \tau_{1/2} , \tau] })$, we obtain
\begin{align*}
|x^\ast- p| & \le \frac12 \left[ l(\gz |_{[0,\hat  \tau_{1/2}] })+l(\gz |_{[\hat  \tau_{1/2} , \tau]} )+|x-x^\ast|\right]
 =\frac12 l(\gz |_{[0, \tau]} )+ \frac12 |x-x^\ast|.
\end{align*}
By \eqref{eqk2}, $$l(\gz|_{ [0, \tau]} )\le  S+\Psi(S),$$
we therefore obtain
\begin{align*}
|x^\ast- p| &\le \frac12 S+\frac12\Psi(S)+ \frac12|x-x^\ast|.
\end{align*}
Note that $S \ge 2^{n_0}$, {\color{black}$\Psi(S) \ge \Psi(2^{n_0})$ and $\Psi(S)\le 2^{-18} S$}.
By the definition of $n_0$ in \eqref{n0} and Lemma \ref{klem}, we know that
\[ |x-x^\ast| \le 2^{n_0-20} \le 2^{-20} S\quad\mbox{and}  \quad |x-x^\ast|\le \frac{1}{\sqrt{2\lz}}m_\lz(x,x^\ast) < \Psi(S). \]
 Thus,
{\color{black}\begin{align*}
|p -w | &=\sqrt{ |x^\ast- p |^2-|x^\ast-w|^2 }  \\
& \le \sqrt {[ \frac S2+\frac12\Psi(S)+ \frac12|x-x^\ast| ]^2-[\frac S2]^2  } \\
&  =  \sqrt{ \frac12\Psi(S) [S+ \frac12\Psi(S)] + \frac12|x-x^\ast| [\frac12|x-x^\ast| +S +  \Psi(S)] } \\
& \le \sqrt{S \Psi(S) }\left( 1 + 2^{-19} + 2^{-20}  +2^{-22} \right)^{1/2} \\
& \le  \sqrt{S \Psi(S) }(1+  2^{-18})^{1/2} \\
& \le  \sqrt{S \Psi(S) }(1+ 2^{-19})
\end{align*}}
where in the last inequality we applied $(1+c)^{1/2} \le 1 + \frac{c}{2}$ for any $c\ge 0$.
Therefore, \eqref{pw1-1} holds.

\medskip

 {\it Proof of \eqref{pq1}.}
Recalling $p=\gz  (\hat  \tau_{1/2})$  and $\la p-w,w-y\rangle=0$ one has
$$l(\gz |_{[\hat  \tau_{1/2},\tau ]})\ge |p-y | \ge |w -y|=\frac S2,$$
and
$$l(\gz |_{[0,\hat \tau_{1/2}]})\ge |x-p|\ge |x^\ast-p|- |x^\ast -x|\ge \frac S2-|x^\ast -x|.$$
From this and \eqref{eqk2},   it follows that
\begin{align*}l(\gz |_{[\hat \tau_{1/2},\tau]})&= l(\gz |_{[0,\tau]})-l(\gz |_{[0,\hat \tau_{1/2}]})
 \le  S+\Psi(S)-\frac S2  +|x^\ast -x|= \frac S2+\Psi(S)+|x^\ast -x|.
\end{align*}

Moreover, by recalling $|x^\ast -q|=S/2$ and Lemma \ref{klem}, we have
  $$ l(\gz |_{[0, \tau_{1/2}]}) \ge|x-q| \ge |q-x^\ast|-|x^\ast-x|\ge
\frac S2-\frac{1}{\sqrt{2\lz}}m_\lz(x,x^\ast)\ge \frac S2-\Psi(S). $$
From this and \eqref{eqk2}, one deduces that
  \begin{equation}\label{tau121}
l(\gz |_{[ \tau_{1/2},\tau]})=l(\gz |_{[0,\tau]})-l(\gz |_{[0, \tau_{1/2}]})  \le
S+\Psi(S)-\frac S2+\Psi(S)\le \frac S2+2\Psi(S).
  \end{equation}

\vspace*{7pt}
\begin{figure}[h]
\centering
\includegraphics[width=12cm]{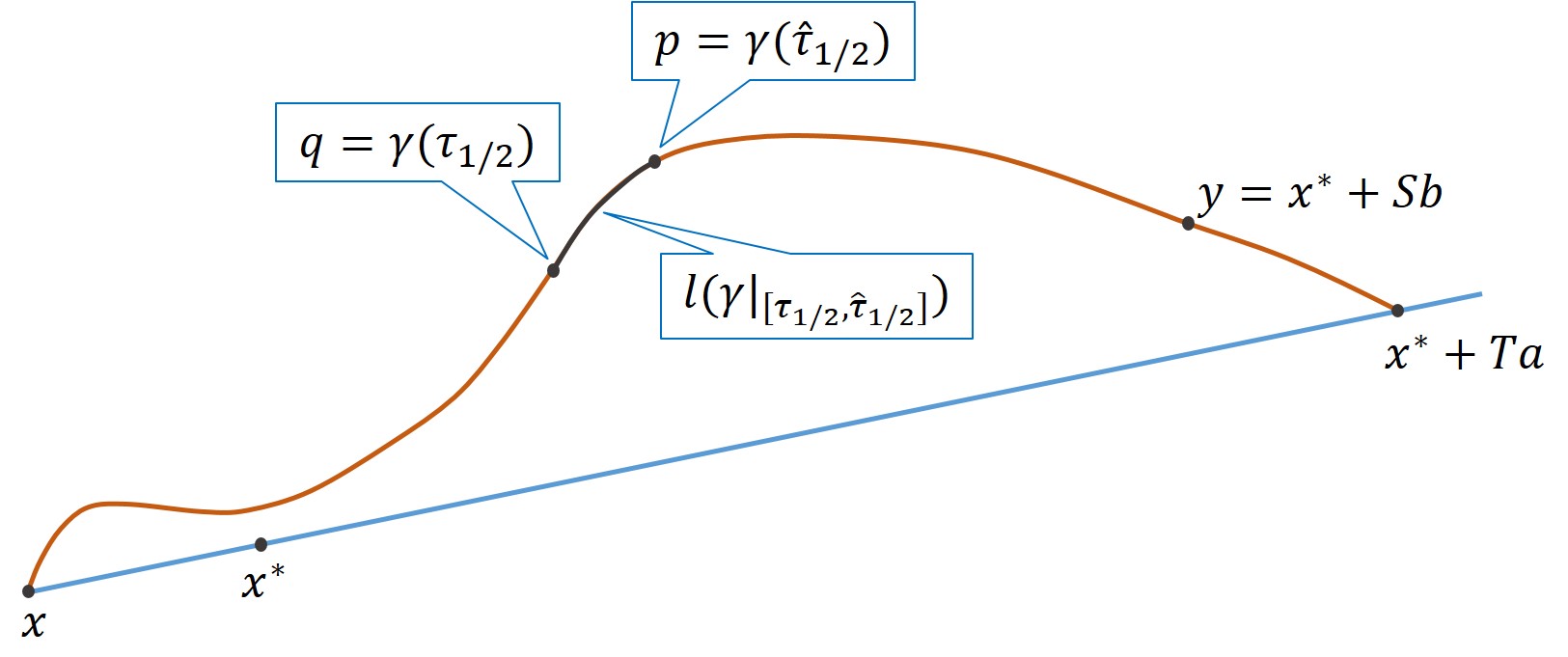}
\caption  {The length of $l(\gz |_{[ \tau_{1/2},\hat \tau_{1/2} ]})$.}
\label{f2}
\end{figure}

Since  $ \tau_{1/2}\le \hat  \tau_{1/2}$,   one has  $$ |p-q|\le  l(\gz |_{[ \tau_{1/2},\hat \tau_{1/2} ]})
 =l(\gz |_{[ \tau_{1/2},\tau]})-  l(\gz |_{[\hat  \tau_{1/2},\tau ]})\le \frac S2+2\Psi(S)- \frac S2\le 2\Psi(S) \le 2^{-8} \sqrt{ S\Psi(S)}.  $$
Thus,   $$ |p-q|\le  2 \Psi(S) \le 2^{-8} \sqrt{ S\Psi(S)},$$
which shows \eqref{pq1}.
\end{proof}

Next we prove \eqref{zq2} in Lemma \ref{tan1}.

\begin{proof}[Proof of \eqref{zq2} in Lemma \ref{tan1}]

Recall that $y=\gz ( \tau )$ and  $q  =\gz ( \tau_{1/2})$. For any $t\in (\tau_{1/2},\tau)$, the definition of $\tau_{1/2}$ implies that $|\gz(t)-x^\ast|\ge \frac S2${\color{black}, moreover
there is  a unique $z \in [q,y]$}  such that
$$d_E(\gz(t), {\color{black}[q,y]}) =|\gz(t)-z|.$$
It then suffices to prove
\begin{equation}\label{zq1}
 |\gz(t)-z| \le  2\sqrt {S \Psi(S)}.
\end{equation}
Indeed,
considering
 $y=\gz ( \tau)$, inequalities \eqref{pw1-1} and \eqref{pq1} imply that
$$d_E(z, x^\ast+\rr_+b)\le d_E(z,[x^\ast,y])\le d_E(q,[x^\ast,y])\le   |q-p|+|p-w|\le 2\sqrt{S\Psi(S)}.$$
Thus
$$d_E(\gz(t), x^\ast+\rr_+b) \le |\gz(t)-z|+d_E(z,x^\ast+\rr_+b)   \le  4\sqrt {S \Psi(S)},$$
which {\color{black}
shows \eqref{zq2}. See Figure \ref{f3}} for illustration.
\vspace*{7pt}
\begin{figure}[h]
\centering
\includegraphics[width=12cm]{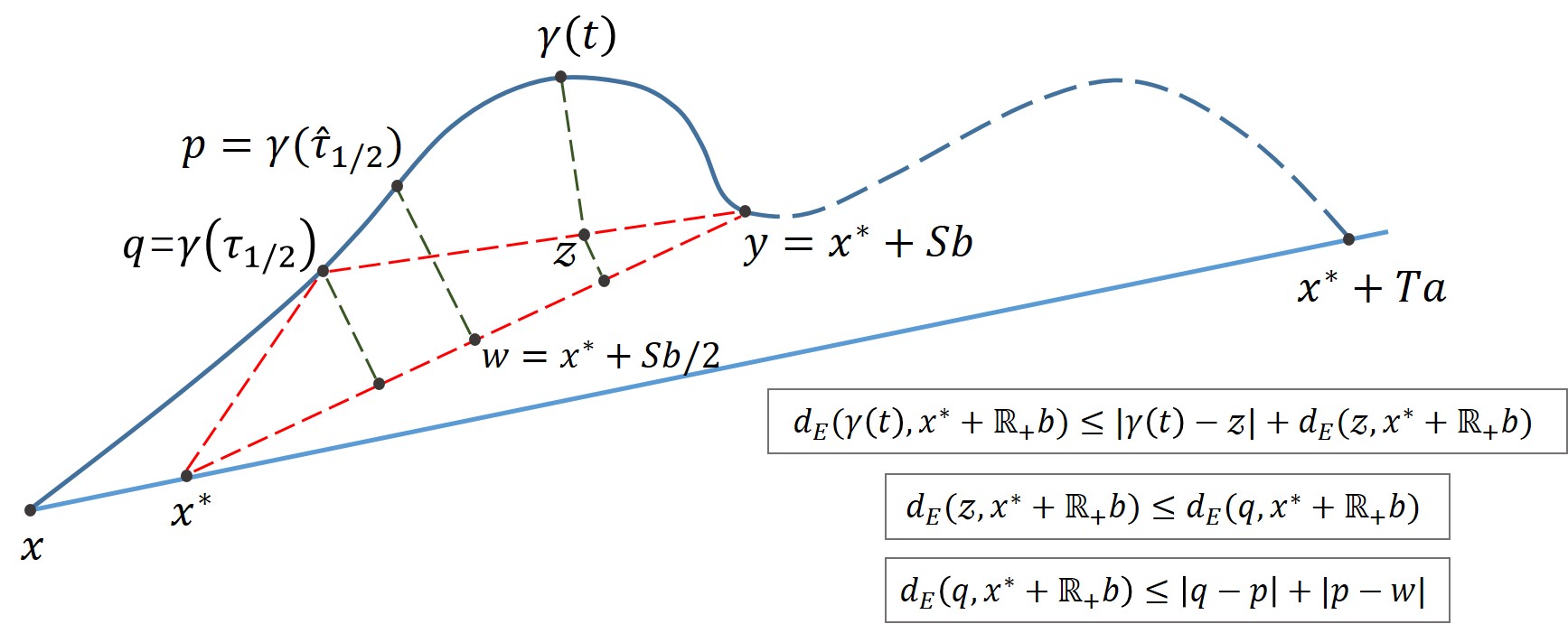}
\caption  {An illustration of showing \eqref{zq2}.}
\label{f3}
\end{figure}

Finally, we prove  \eqref{zq1} by considering three cases: Case $z \in (q,y)=(\gz ( \tau_{1/2}),\gz ( \tau))  $,   Case $z= y=\gz ( \tau )$ and
Case $z= q=\gz ( \tau_{1/2})$.  Recall that $(q,y):=\{sq+(1-s)y: s\in(0,1)\}$.
Moreover, since $\gz|_{[\tau_{1/2},t]}$ {\color{black} joins $q$ to $\gz(t)$} and also $\gz|_{t,\tau]}$ joins {\color{black}$ \gz(t)$ to $y$}, we always have
 \begin{equation}\label{ltau12tau}|q-\gz(t)|+|\gz(t)-y|\le l(\gz|_{[\tau_{1/2},t]})+ l(\gz|_{[t,\tau]})= l(\gz|_{[\tau_{1/2},\tau]}) .\end{equation}

\medskip
{\it   Case $z \in (q,y)  $}.
The proof in this case is illustrated by  Figure \ref{f4}.
\vspace*{7pt}
\begin{figure}[h]
\centering
\includegraphics[width=12cm]{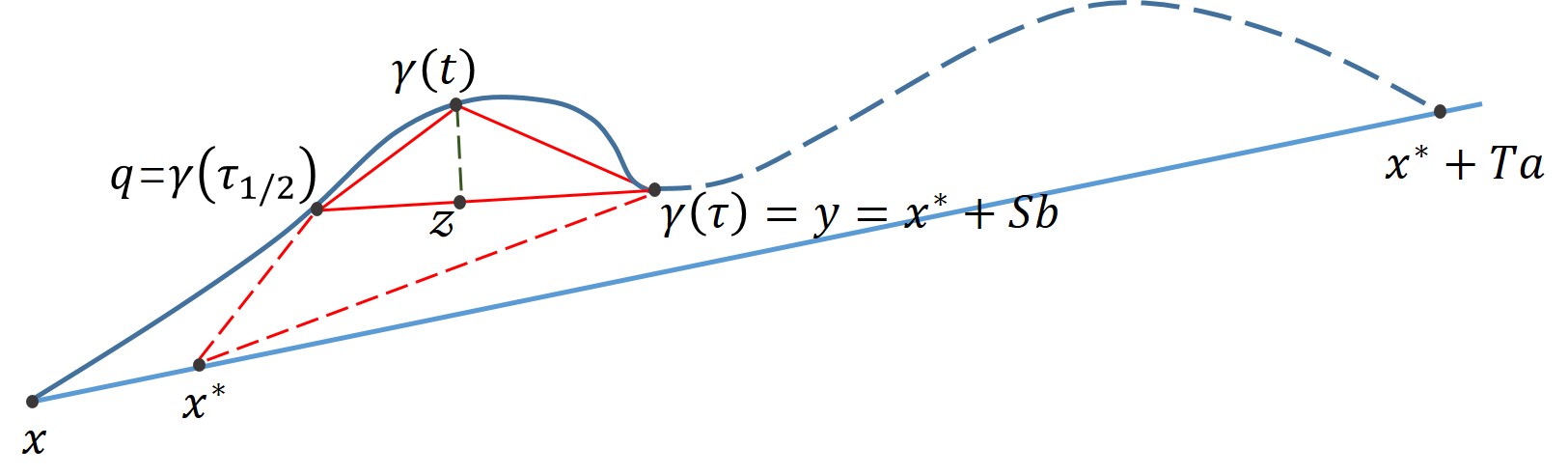}
\caption  {An illustration of Case  $z\in (q,y)  $.}
\label{f4}
\end{figure}

In this case, we first observe that
\begin{equation}\label{perp}\langle \gz(t)-z, q-y\rangle=0.
\end{equation}
Geometrically, \eqref{perp} is obvious. A standard proof to \eqref{perp} goes as follows. If $ |\dz|<\dz_0$ for some $\dz_0>0$,
 one has  $z +\dz(q-y)\in (q,y).$
By the definition of $z$,  the function $$h(\dz):=|\gz(t)-z- \dz(q-y) |^2$$ reaches
minimal in $(-\dz_0,\dz_0)$ at   $\dz=0$.   Thus $0=h'(0)=-2\langle \gz(t)-z, q-y\rangle$, which gives
 \eqref{perp}.

%

By \eqref{perp}, we have
\begin{align*}
|\gz(t)-z|^2&=\frac12\left[|q-\gz(t)|^2-|q-z| ^2+|\gz(t)-y|^2-|y-z| ^2\right]\\
&= \frac12[(|q-\gz(t)|+|\gz(t)-y|)^2-2|q-\gz(t)| |y-\gz(t)|\\
&\quad\quad\quad\quad\quad -(|q-z|+|y-z| ) ^2 +2|q-z||y-z|  ].
\end{align*}

Note that $z\in(q,y)$ implies $$ |q-z|+|y-z| =|q-y|.$$
By $\langle \gz(t)-z,q-y\rangle=0$, one has
$$|q-z|\le |q-\gz(t)|,\quad  |y-z|  \le |y-\gz(t)|, $$
and hence  $$|q-\gz(t)| |y-\gz(t)|\ge |q-z||y-z|.$$
These together with \eqref{ltau12tau} yield
\begin{equation}\label{gztz}
|\gz(t)-z|^2 \le \frac12\left[l(\gz|_{[\tau_{1/2},\tau]})^2-|q-y|^2\right].
\end{equation}
 Since $|q-x^\ast|=\frac S2$,   one has
\begin{equation}\label{qys2}
|q-y|\ge |x^\ast-y|-|x^\ast-q|= \frac S2.
\end{equation}
By \eqref{tau121}, \eqref{gztz} and \eqref{qys2}, we further obtain
\begin{align*}
|\gz(t)-z|^2
&\le \frac12\left[(\frac S2+2 \Psi(S))^2-(\frac S2)^2\right] =   S\Psi(S)+ 2\Psi(S)^2.
\end{align*}
Since    $\Psi(S)< 2^{-18} S$ by \eqref{psiss}, we have
$ |\gz(t)-z| < 2 \sqrt{S\Psi(S)} $ as desired.
%


{\it Case  $z=y$.}  The proof in this case is illustrated by Figure \ref{f5} below.
\vspace*{7pt}
\begin{figure}[h]
\centering
\includegraphics[width=12cm]{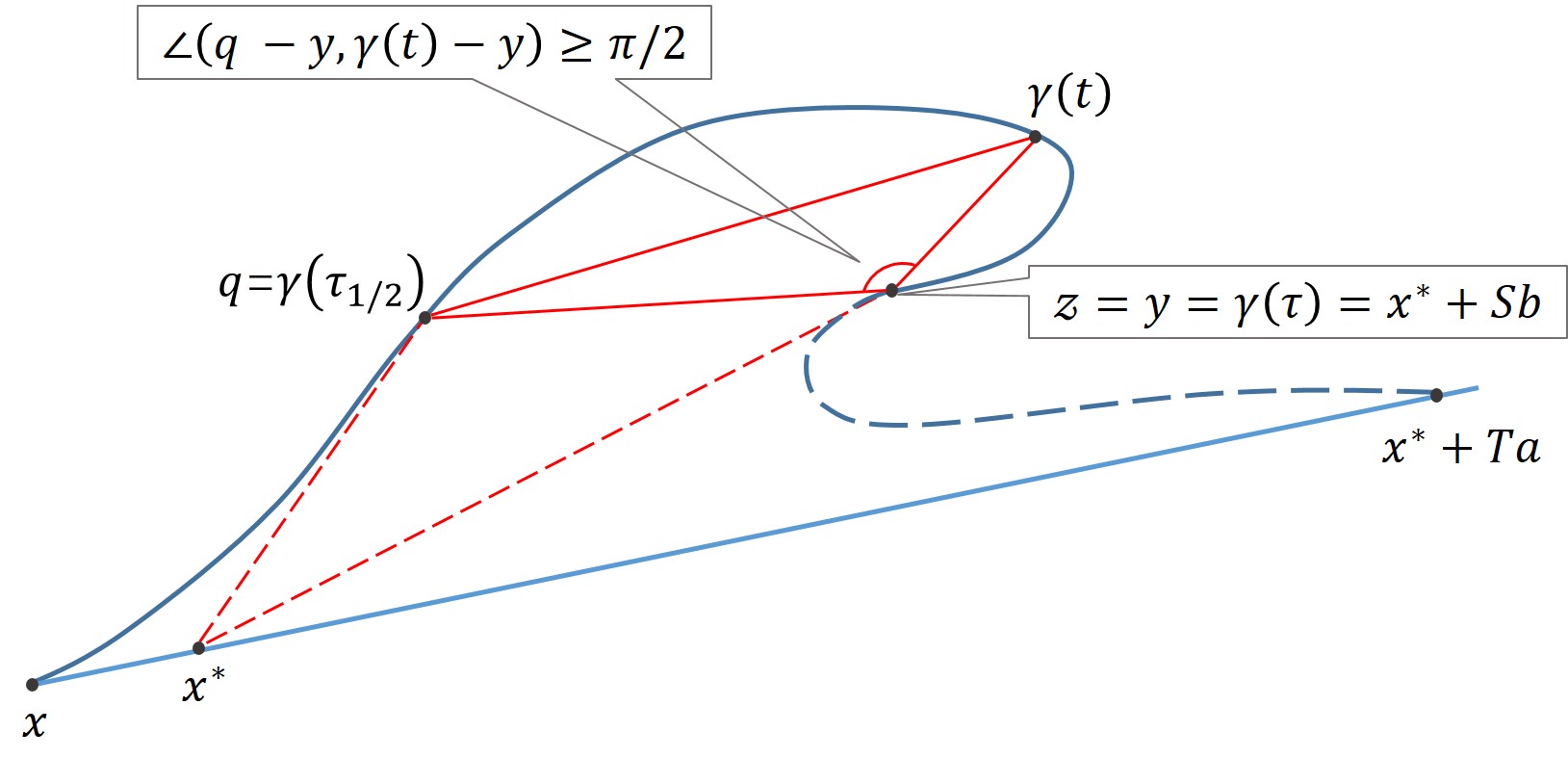}
\caption  {An illustration of Case  $z\notin [\gz ( \tau_{1/2}),\gz ( \tau)]  $.}
\label{f5}
\end{figure}

 In this case,  note that
\begin{equation}\label{anglegepi}\angle (\gz(t)-y, q-y)\ge \frac \pi2,
\end{equation}
Geometrically, \eqref{anglegepi} is obvious. We give a proof. If $  0\le \dz\le 1 $,
 one has $$z +\dz(q-y)= y +\dz(q-y)\in [q,y].$$
By the definition of $z$,  the function $$h(\dz):=|\gz(t)-z- \dz(q-y)|^2$$
reaches minimal in $[0,1]$ at   $\dz=0$.   Thus $$ 0\le \lim_{\dz\to 0+}\frac1{\dz}[h(\dz)-h(0)]=-2\langle\gz(t)-y,q-y\rangle  $$
which gives \eqref{anglegepi}  as desired.

Considering the triangle $\triangle  (\gz(t), \gz(\tau),\gz(\tau_{1/2}) )$. \eqref{qys2} and \eqref{anglegepi} imply that
\[ |\gz(t)-q| \ge |q-y|\ge \frac S2.\]
Applying \eqref{ltau12tau} and \eqref{tau121}, by $z=y$ we therefore obtain
 $$ |\gz(t)-z|= \left[|\gz(t)-q|+ |\gz(t)-y|\right]-|\gz(t)-q| \le  \frac S2+2\Psi(S)- \frac S2=
2\Psi(S)<2\sqrt{S\Psi(S)}.$$

{\it Case $z=q$.}   The proof in this case is much similar to   Case $z=y$. We omit the details.

{\color{black}This ends the proof}.
\end{proof}

\section{ Estimates of Euclidean length and angle  of $m_\lz$-geodesics  }

Given  any $x\in \wz \Omega$, any
 $a\in \bs^{dN-1}$ with $a^\flat>0$, and any
$\lz>0$, let   $x^\ast$, $\Psi$, $\wz\Psi$  and $n_0$ be as in Section 5.

%
%

 \begin{lem}  \label{unif} Let $\gz\in \mathcal{AC}(x,x^\ast+2^na;[0,\sz];\wz\Omega)$ be an  $m_\lz$-geodesic with canonical parameter, where
  $n>n_0$. Set   $$\sz_{n_0}:=\max\{s\in[0,\sz]: |\gz(s)-x^\ast|=2^{n_0}\}.$$
For $t\in [\sz_{n_0},\sz]$, we have
\begin{align}\label{distanceest}
| \frac{\gz(t)-x^\ast}{|\gz(t)-x^\ast|}-a | \le 2^4 \wz\Psi( |\gz(t)-x^\ast|)
 \end{align}
and
\begin{align}\label{sizeest}l(\gz|_{[0,t]})\le \frac1{\sqrt{2\lz}}m_{\lz}(x,\gz(t))\le |\gz(t)-x^\ast|(1+\wz\Psi^2(|\gz(t)-x^\ast|)).
\end{align}
 In particular,  \begin{align}\label{flatgz}
\mbox{
$\gz(t) \in \Omega$  with
$(\gz(t))^\flat\ge (x^\ast)^\flat>0 $ for all $t\in[\sz_{n_0},\sz]$.}\end{align}

%
%
\end{lem}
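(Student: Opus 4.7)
The plan is to argue by a downward dyadic induction on scales, and then interpolate to arbitrary times and invoke Lemma 4.1$^\ast$ for the length bound. Set $\rho(t):=|\gz(t)-x^\ast|$ and introduce the dyadic times $\tau_k:=\max\{t\in[0,\sz]:\rho(t)=2^k\}$ for $n_0\le k\le n$, together with the unit directions $b_k:=(\gz(\tau_k)-x^\ast)/2^k$. Continuity of $\rho$, combined with $\rho(0)=|x-x^\ast|\le 2^{n_0-20}$ and $\rho(\sz)=2^n$, guarantees that each $\tau_k$ is well-defined and strictly increasing in $k$ (so $\tau_{n_0}=\sz_{n_0}$, $\tau_n=\sz$ and $b_n=a$); moreover $\rho(s)>2^k$ for every $s\in(\tau_k,\sz]$, since otherwise the intermediate value theorem would produce a later time with $\rho=2^k$.

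First, I would prove the dyadic angle estimate $|b_k-a|\le 4\wz\Psi(2^k)$ by downward induction from $k=n$ (where $b_n=a$) down to $k=n_0$. For the inductive step, $4\wz\Psi(2^k)\le 4\wz\Psi(2^{n_0})\le \tfrac{1}{10}a^\flat$ by the choice of $n_0$ in \eqref{n0}, so Lemma \ref{tan1} applies with $\tau=\tau_k$, $S=2^k$ and $b=b_k$. Since after $\tau_k$ the curve never returns to distance $2^{k-1}$, the time $\tau_{1/2}$ appearing in Lemma \ref{tan1} coincides with $\tau_{k-1}$; conclusion \eqref{pw1} then yields $2^{k-1}|b_{k-1}-b_k|\le 2\sqrt{2^k\Psi(2^k)}$, i.e.\ $|b_{k-1}-b_k|\le 4\sqrt{2^{-k}\Psi(2^k)}$. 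The telescoping identity $\wz\Psi(2^{k-1})=\wz\Psi(2^k)+\sqrt{2^{-k}\Psi(2^k)}$ and the triangle inequality close the induction.

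For a general $t\in[\sz_{n_0},\sz]$, I would choose $k$ so that $t\in[\tau_{k-1},\tau_k]$ and apply Lemma \ref{tan1} at scale $2^k$; conclusion \eqref{zq2} then gives $d_E(\gz(t),x^\ast+\rr_+ b_k)\le 4\sqrt{2^k\Psi(2^k)}$. Projecting $\gz(t)-x^\ast$ onto the half-ray $x^\ast+\rr_+ b_k$ and using the lower bound $\rho(t)>2^{k-1}$ translates this into a bound on $|c_t-b_k|$ with $c_t:=(\gz(t)-x^\ast)/\rho(t)$; together with $|b_k-a|\le 4\wz\Psi(2^k)$ and the telescoping structure of $\wz\Psi$ this gives \eqref{distanceest} with constant $2^4$. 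In particular $|c_t-a|\le \tfrac{1}{20}a^\flat$, so Lemma 4.1$^\ast$ applies with $T=\rho(t)$ and $b=c_t$, noting that the restriction $\gz|_{[0,t]}$ is itself an $m_\lz$-geodesic with canonical parameter joining $x$ to $\gz(t)=x^\ast+Tb$. This yields $l(\gz|_{[0,t]})\le \tfrac{1}{\sqrt{2\lz}}m_\lz(x,\gz(t))\le \rho(t)+\Psi(\rho(t))$; the inequality $\Psi(r)\le r\wz\Psi^2(r)$, obtained by squaring \eqref{rel}, upgrades this to \eqref{sizeest}. The non-collision claim \eqref{flatgz} is then immediate from Lemma \ref{lem3-4} applied with $s=50(1+|x|)/a^\flat$, $T=\rho(t)$ and $b=c_t$.

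The delicate point is the interpolation step. Since $\rho(t)$ need not be monotone along the minimizing geodesic, within a single time-block $[\tau_{k-1},\tau_k]$ the distance $\rho(t)$ may temporarily exceed $2^k$, so the dyadic time-scale and the dyadic distance-scale need not match. Matching the constant $2^4$ in \eqref{distanceest} therefore requires careful bookkeeping -- either reindexing $k$ via $\rho(t)$ rather than via $t$, or combining the estimates from Lemma \ref{tan1} at two neighbouring dyadic scales -- so that the gap $\wz\Psi(2^{k-1})-\wz\Psi(2^k)$ is absorbed cleanly into the telescoping series defining $\wz\Psi(\rho(t))$. This is the heart of the induction indicated by Figure 6 in the sketch.
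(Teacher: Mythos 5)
Your proposal follows the paper's proof essentially verbatim: the dyadic times $\tau_k$ and directions $b_k$ coincide with the paper's $\sz_i$ and $a^{(i)}$, the downward induction via Lemma~\ref{tan1}'s conclusion~\eqref{pw1}, the handling of intermediate times via~\eqref{zq2}, the passage to the length bound through Lemma~4.1$^\ast$ and the inequality $\Psi(r)\le r\wz\Psi^2(r)$, and the non-collision statement via Lemma~\ref{lem3-4} are all the paper's steps, in the paper's order. Your formulation of the inductive hypothesis as $|b_k-a|\le 4\wz\Psi(2^k)$ (rather than first proving the one-step bound $|b_{k-1}-b_k|\le 4\sqrt{2^{-k}\Psi(2^k)}$ and then telescoping afterwards, as the paper presents it) is a cosmetic repackaging, not a different route; it has the mild advantage that the hypothesis needed to invoke Lemma~\ref{tan1} at scale $k$ is supplied directly.

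The ``delicate point'' you flag is genuine but its resolution is simpler and more local than the two strategies you sketch (reindexing by $\rho(t)$ or combining two scales); the paper in fact leaves it implicit. The point is that on the time block $[\tau_{k-1},\tau_k]$ the radius $\rho(t)=|\gz(t)-x^\ast|$ is trapped from above as well as below. From~\eqref{tau121} in the proof of Lemma~\ref{tan1} (applied with $S=2^k$, $\tau=\tau_k$, $\tau_{1/2}=\tau_{k-1}$) one has $l(\gz|_{[\tau_{k-1},\tau_k]})\le 2^{k-1}+2\Psi(2^k)$, hence
\begin{align*}
\rho(t)\ \le\ \rho(\tau_{k-1})+l(\gz|_{[\tau_{k-1},t]})\ \le\ 2^{k-1}+2^{k-1}+2\Psi(2^k)\ =\ 2^{k}+2\Psi(2^k)\ <\ 2^{k+1},
\end{align*}
the last step using~\eqref{psiss}. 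Combined with $\rho(t)>2^{k-1}$ this gives $\lfloor\log_2\rho(t)\rfloor\in\{k-1,k\}$, and since $\wz\Psi$ is a step function, constant on each dyadic block $[2^j,2^{j+1})$, it follows that $\wz\Psi(\rho(t))\ge\wz\Psi(2^k)$. This is exactly what is needed to absorb the term $|b_k-a|\le 4\wz\Psi(2^k)$ into $\wz\Psi(\rho(t))$, and together with the projection estimate (which also only uses $\rho(t)\ge 2^{k-1}$) it yields the constant $2^4$ without any reindexing or multi-scale combination. With this one observation added, your outline is a complete proof and is identical in substance to the paper's.
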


\begin{proof}
For $ n_0\le i\le n $, set
$$ \sz_i:=\max\{t\in(0,\sz): \,|x^\ast-\gz (t)| = 2^{ i}  \}.$$
Write
 $$\mbox{$a^{(n)} =a$, and $a^{(i)} :=2^{-i}(\gz ({ \sz_i })-x^\ast)$ for $n_0 \le i\le n-1$.} $$
Obviously,  $$a^{(i)}= \frac{\gz ({ \sz_i })-x^\ast}{|\gz ({ \sz_i })-x^\ast|}\in \bs^{dN-1}.$$

{\it Step 1.} We  claim that
\begin{equation}\label{exx}|a^{(i )}-a^{(i+1)}|\le  4\sqrt{     2^{-(i+1)} \Psi(2^{i+1} )  }, \quad
\mbox{for any $n_0 \le i\le n-1$}.
\end{equation}
We prove this  by induction; see   Figure 6 for an illustration.
 \vspace*{7pt}
\begin{figure}[h]
\centering
\includegraphics[width=12cm]{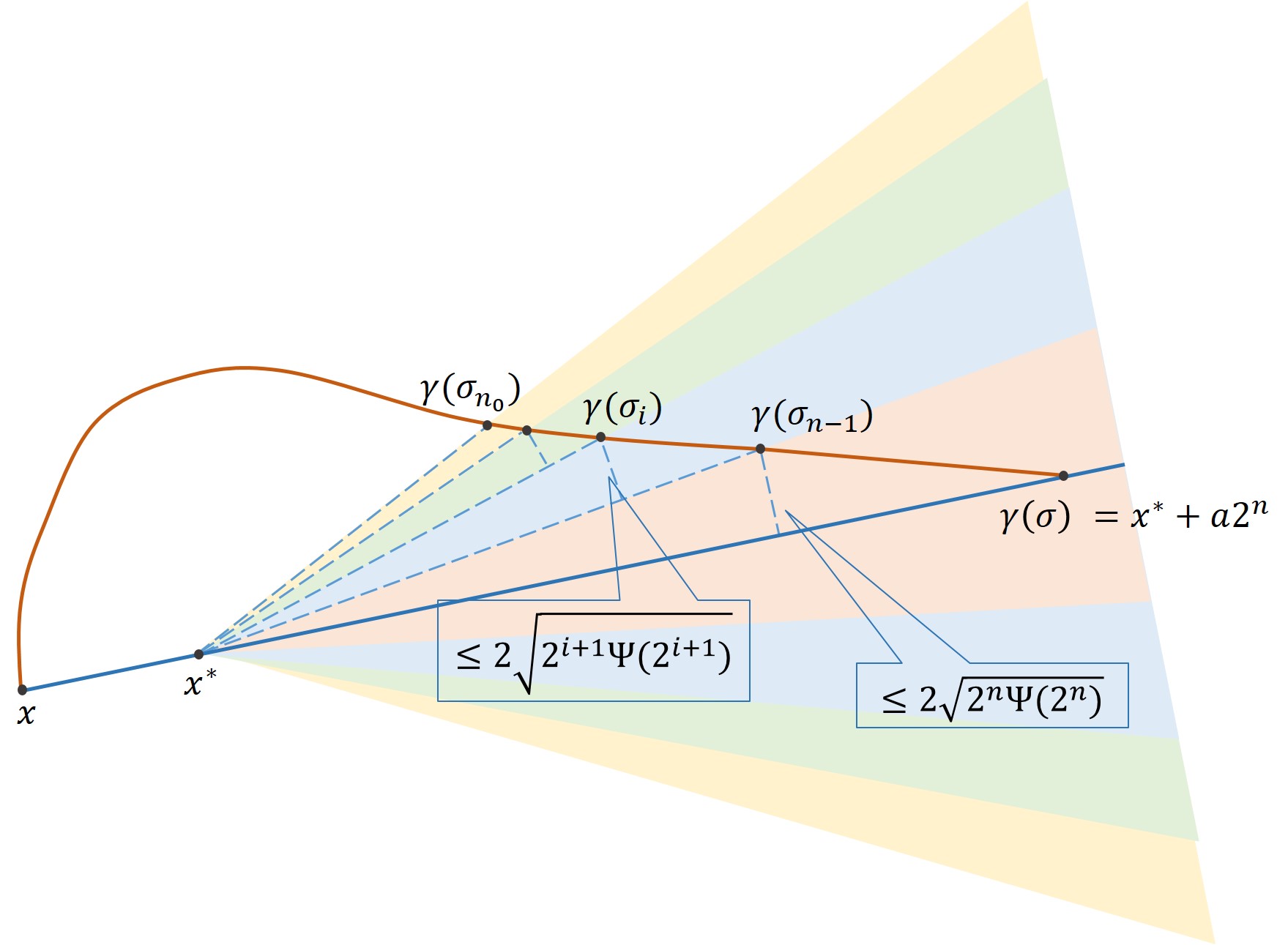}
\caption  {An illustration of the induction process.}
\label{f6}
\end{figure}

 \noindent
  First, we see that \eqref{exx} holds for $i=n-1$. Indeed, applying  Lemma \ref{tan1} with
 $b=a=a^{(n)}$, $\tau=\sz$ and $\tau_{1/2}=\sz_{n-1}$, we know that
$$| \gz ( \sz_{n-1})-(x^\ast+ 2^{n-1}   a)|\le 2\sqrt{ 2^{n} \Psi(2^n) }.$$
Hence dividing both sides by $2^{n-1}$ in the above inequality, we have
$$|a^{(n-1)}-a^{(n)}|=| \frac{\gz ( \sz_{n-1})-x^\ast}{2^{n-1}} -a|  \le 4\sqrt{ 2^{-n}\Psi(2^n)   }\le 2^{-8} a^\flat,$$
 where in the last inequality we use \eqref{size2}. Then by Lemma \ref{angle}, we know $a^{(n-1)} \in \Omega$.

Next, given any $j\ge n_0$, assume that \eqref{exx} holds  for all $j+1\le i\le  n-1$. By the definition of $n_0$ in \eqref{n0}, it holds that $\wz \Psi(2^{n_0}) \le 2^{-10} a^\flat.$ This implies $$|a^{ (j+1)}-a^{(n)}| \le  \sum_{i=j+1}^{n-1} |a^{(i )}-a^{(i+1)
}|\le \sum_{i=j+2}^\fz 4\sqrt{   2^{-i}  \Psi(2^{i })  }=  4\wz \Psi(2^{j+2}) \le 4 \wz \Psi(2^{n_0})   \le 2^{-8}  a^\flat.$$
  Here, again, in the last inequality we use \eqref{size2} and    $a^{(j)} \in \Omega$. By this,  applying Lemma \ref{tan1} with  $b=  a^{( j+1)}$, $\tau=\sz_{j+1}$ and $\tau_{1/2}=\sz_j$,  we have
$$| \gz ( \sz_{ j  })-(x^\ast+ 2^{j }  a^{(j+1)})|\le 2\sqrt{  2^{j+1}  \Psi(2^{ j+1} ) }$$
Since
 $\gz ( \sz_{j })=x^\ast+   2^{ j }  a^{( j )} $, we have
\begin{equation}\label{exxj}|a^{(j )}-a^{(j+1)}|\le  4\sqrt{     2^{-(j+1)}  \Psi(2^{ j+1} )  }.
\end{equation}
 that is, \eqref{exx} holds  for $i=j$.

Thus  \eqref{exx} holds  for $n_0\le i\le n-1$ as desired.

\medskip
{\it Step 2.} We show \eqref{distanceest}, that is,
$$| \frac{\gz(t)-x^\ast}{|\gz(t)-x^\ast|}-a |\le   16 \wz\Psi( |\gz(t)-x^\ast|), \quad {\color{black} \text{for $t\in [\sz_{n_0},\sz]$}}.$$

By Step 1, {\color{black}if $n_0 \le i\le n-1$,} we have
$$|a^{(i )}-a^{(n)}| \le  \sum_{j=i+1}^{n } |a^{(j-1)}-a^{(j)}| \le \sum_{j=i}^\fz 4\sqrt{   2^{-j}  \Psi(2^{j })  }=  4\wz \Psi(2^{i}).
$$
That is,
$$| \frac{\gz(\sz_i)-x^\ast}{|\gz(\sz_i)-x^\ast|}-a |  \le 4 \wz \Psi(|\gz(\sz_i)-x^\ast|). $$

 For any  $t\in [\sz_{i },\sz_{i+1}]$,
by the definition of $  \sz_i $ and $\sz_{i+1}$, we know $$ |\gz(t)-x^\ast|\ge |\gz( \sz_i)-x^\ast| = 2^i.$$
Applying Lemma \ref{tan1} with $b= a^{(i+1)}$ in \eqref{zq2}, $\tau=\sz_{i+1}$ and $\tau_{1/2}=\sz_i$, by the increasing property of $\Psi$ we have
\[d_E(\gz(t),  x^\ast+\rr_+a^{(i +1)} ) \le    4\sqrt{ 2^{i+1 }\Psi(2^{i+1})}\le 4 \sqrt{  2 |\gz(t)-x^\ast| \Psi(2 |\gz(t)-x^\ast|)}.\]
Then $$\sin \angle(\gz(t)-x^\ast, a^{(i+1)})= \frac {d_E(\gz(t),  x^\ast+\rr_+a^{(i +1)} ) }{|\gz(t)-x^\ast|}\le \frac{ 4\sqrt{ 2^{i+1}\Psi(2^{i+1})}}{2^i} .$$
By \eqref{psiss}, we know that $2^{-(i+1)} \Psi(2^{i+1}) \le  2^{-18}$ and hence
$$\sin \angle(\gz(t)-x^\ast, a^{(i+1)}) \le  2^{-6}.$$
This also gives
 $$\cos\angle(\gz(t)-x^\ast, a^{(i+1)}) \ge  \sqrt{1-2^{-12}}\ge \frac 9{10}.$$
It follows that
{\color{black}
\begin{align*}| \gz(t)-(x^\ast+|\gz(t)-x^\ast|a^{(i+1)}) | &= d_E(\gz(t),  x^\ast+\rr_+a^{(i +1)} )  \frac{1}{\cos (\frac12  \angle(\gz(t)-x^\ast, a^{(i+1)}))} \\
& \le d_E(\gz(t),  x^\ast+\rr_+a^{(i +1)} )  \frac{1}{\cos \angle(\gz(t)-x^\ast, a^{(i+1)})} \\
& \le \frac{10}{9} d_E(\gz(t),  x^\ast+\rr_+a^{(i +1)} ) .
\end{align*}}

We  conclude that
\begin{align*}
| \frac{\gz(t)-x^\ast}{|\gz(t)-x^\ast|}-a^{(i+1)}|&=
 \frac { | \gz(t)-(x^\ast+|\gz(t)-x^\ast|a^{(i+1)}) |}{|\gz(t)-x^\ast|}\\
&\le \frac{10}{9} \frac{d_E(\gz(t), x^\ast+\rr_+ a^{(i+1)} )}{|\gz(t)-x^\ast|} \\
&  \le  \frac{80}{9}\sqrt{\frac{    \Psi(2|\gz(t)-x^\ast|)} { 2|\gz(t)-x^\ast|}}\\
&\le \frac{80}{9} \wz\Psi( |\gz(t)-x^\ast|).
\end{align*}
Thus for any  $t\in [\sz_{i },\sz_{i+1}]$, it holds that
\begin{align*}| \frac{\gz(t)-x^\ast}{|\gz(t)-x^\ast|}-a |&\le  | \frac{\gz(t)-x^\ast}{|\gz(t)-x^\ast|}-a^{(i+1)}|+ |  a^{(i+1)}-a|\\
&\le  \frac{80}{9}\wz\Psi( |\gz(t)-x^\ast|)+ 4\wz\Psi(2^{i+1})\\
&\le 16 \wz\Psi( |\gz(t)-x^\ast|).
 \end{align*}

{\it Step 3.}
 For  $t\ge \sz_{n_0}$, we have
 $$16 \wz\Psi( |\gz(t)-x^\ast|)\le 16 \wz\Psi(2^{n_0})\le 2^{-6}a^\flat,$$
and hence  $$| \frac{\gz(t)-x^\ast}{|\gz(t)-x^\ast|}-a |\le 2^{-6}a^\flat.$$
 This shows $\frac{\gz(t)-x^\ast}{|\gz(t)-x^\ast|} \in \Omega$ by Lemma \ref{angle}. Write $$ \gz(t)=x^\ast +Tb=x+sa+Tb\ \mbox{with} \ b= \frac{\gz(t)-x^\ast}{|\gz(t)-x^\ast|} \mbox{  and } \ T=|\gz(t)-x^\ast|.$$  By \eqref{cofree}, we see that $(\gz(t))^\flat \ge (x^\ast)^\flat>0$ and $\gz(t) \in \Omega$, that is, \eqref{flatgz} holds.

Recalling \eqref{rel}, i.e. $\sqrt{ t^{-1}\Psi(t)}\le \wz \Psi(t)$, we conclude \eqref{sizeest} from
 \eqref{eqk2}.
\end{proof}
%
%
%
%
%
%
%
%

\section{Large time  behavior of $m_\lz$-geodesics}

Given  any $x\in \wz \Omega$, any
 $a\in \bs^{dN-1}$ with $a^\flat>0$, and any
$\lz>0$, let   $x^\ast$, $\Psi$, $\wz\Psi$  and $n_0$ be as in Section 5.

 \begin{lem}  \label{unifg}
Let $\gz\in \mathcal{AC}(x,x^\ast+2^na;[0,\sz];\wz\Omega)$ be an  $m_\lz$-geodesic with canonical parameter, where
  $n>n_0$. Then
for any $t\ge \sz_{n_ 0}$,  we have
\begin{align}\label{e7-1}
\int_0^{t}|\dot\gz(s)|^2\,ds \le  \sqrt{2\lz}    |\gz(t)-x^\ast|(1 + \wz \Psi^2(|\gz(t)-x^\ast|) )\le
8\lz t,
\end{align}
\begin{align}\label{e7-2} \frac12\le \frac1{1+\wz\Psi^2(|\gz(t)-x^\ast |) }\le
\frac{|\gz(t)-x^\ast |}{ \sqrt{2\lz} t}\le   1+\wz\Psi^2(|\gz(t)-x^\ast |)\le 2 \end{align}
and
\begin{align}\label{e7-3}
\frac{|\gz(t)-x^\ast-\sqrt{2\lz}at|^2  }{ (\sqrt{2\lz}t)^2}\le 2^9\wz\Psi^2(|\gz(t)-x^\ast|)\le 2^9\wz\Psi^2(\frac12\sqrt{2\lz}t).\end{align}

\end{lem}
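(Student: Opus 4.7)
I plan to establish the three assertions in the order \eqref{e7-2}, \eqref{e7-1}, \eqref{e7-3}, each building on the previous. The starting point is the energy identity $\tfrac12|\dot\gz(s)|^2=F(\gz(s))+\lz$, which holds almost everywhere on $[\sz_{n_0},\sz]$ by \eqref{flatgz} and Lemma~A.3. Substituting into the Lagrangian gives $\tfrac12|\dot\gz|^2+F+\lz=|\dot\gz|^2$, so for every $t\in[\sz_{n_0},\sz]$,
\[\int_0^t|\dot\gz(s)|^2\,ds=A_\lz(\gz|_{[0,t]})=m_\lz(x,\gz(t)),\]
and in particular $|\dot\gz|\ge\sqrt{2\lz}$ a.e., whence $l(\gz|_{[0,t]})\ge\sqrt{2\lz}\,t$.

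For \eqref{e7-2}, set $R=|\gz(t)-x^\ast|$. The lower bound $R\ge\sqrt{2\lz}\,t/(1+\wz\Psi^2(R))$ is immediate by coupling $l(\gz|_{[0,t]})\ge\sqrt{2\lz}\,t$ with the upper bound $l\le R(1+\wz\Psi^2(R))$ from \eqref{sizeest}. For the matching upper bound on $R$, I apply the concavity inequality $\sqrt{1+s}\le 1+s/2$ to $|\dot\gz|=\sqrt{2\lz}\sqrt{1+F(\gz)/\lz}$, which produces $l\le\sqrt{2\lz}\,t+\tfrac{1}{\sqrt{2\lz}}\int_0^t F\circ\gz\,ds$. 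Substituting $\int_0^t F\,ds=\tfrac12(m_\lz(x,\gz(t))-2\lz t)$, invoking $m_\lz(x,\gz(t))\le\sqrt{2\lz}\,R(1+\wz\Psi^2(R))$ from \eqref{sizeest}, combining with the triangle bound $R\le l+|x-x^\ast|$, and absorbing $|x-x^\ast|\le m_\lz(x,x^\ast)/\sqrt{2\lz}\le\Psi(R)\le R\,\wz\Psi^2(R)$ (the last inequality being \eqref{rel}), an elementary rearrangement--whose smallness conditions are guaranteed by $\wz\Psi(R)\le\wz\Psi(2^{n_0})\le 2^{-10}a^\flat$ from \eqref{n0}--delivers $R\le\sqrt{2\lz}\,t(1+\wz\Psi^2(R))$.

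With \eqref{e7-2} in hand, the first inequality of \eqref{e7-1} is now immediate from the action identity and \eqref{sizeest}, and the second follows from the upper bound just obtained: $\sqrt{2\lz}\,R(1+\wz\Psi^2)\le\sqrt{2\lz}\cdot 2\sqrt{2\lz}\,t\cdot 2=8\lz t$. For \eqref{e7-3}, write $e=(\gz(t)-x^\ast)/R\in\bs^{dN-1}$ and decompose
\[|\gz(t)-x^\ast-\sqrt{2\lz}\,at|^2=(R-\sqrt{2\lz}\,t)^2+2R\sqrt{2\lz}\,t\,(1-\la e,a\ra).\]
From \eqref{e7-2} one deduces $|R-\sqrt{2\lz}\,t|\le\sqrt{2\lz}\,t\,\wz\Psi^2(R)$, so the first summand is at most $(\sqrt{2\lz}\,t)^2\wz\Psi^2(R)$; while \eqref{distanceest} together with $1-\la e,a\ra=\tfrac12|e-a|^2$ and $R\le 2\sqrt{2\lz}\,t$ bounds the second summand by $2^{10}(\sqrt{2\lz}\,t)^2\wz\Psi^2(R)$. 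Dividing by $(\sqrt{2\lz}\,t)^2$ and using the monotonicity of $\wz\Psi$ combined with $R\ge\sqrt{2\lz}\,t/2$ to replace $\wz\Psi(R)$ by $\wz\Psi(\tfrac12\sqrt{2\lz}\,t)$ yields \eqref{e7-3}.

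The principal obstacle is the upper bound in \eqref{e7-2}: whereas the lower bound is an immediate consequence of energy conservation, upgrading the $m_\lz$-estimates (which are naturally phrased in terms of $R$) to a linear-in-$t$ control on $R$ demands a feedback argument that crucially relies on the relative smallness $|x-x^\ast|\le R\,\wz\Psi^2(R)$ built into the choice of $x^\ast$ and $n_0$.
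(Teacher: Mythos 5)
Your overall architecture matches the paper's — establish the energy identity, obtain \eqref{e7-2} by bracketing $R:=|\gz(t)-x^\ast|$ between two consequences of \eqref{sizeest}, then derive \eqref{e7-1} and \eqref{e7-3}, the last by a cosine-law expansion combined with \eqref{distanceest}. Where you diverge is in the upper bound of \eqref{e7-2}, and this is where your proposal falls short of the stated constants. You use the concavity bound $|\dot\gz|\le\sqrt{2\lz}+F/\sqrt{2\lz}$, so that $l\le\sqrt{2\lz}t+\tfrac1{\sqrt{2\lz}}\int_0^t F$, then substitute $\int_0^t F=\tfrac12(m_\lz-2\lz t)$ and rearrange via $R\le l+|x-x^\ast|$. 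Tracing this through carefully,
\[R\le\frac{\sqrt{2\lz}\,t}{2}+\frac{R(1+\wz\Psi^2(R))}{2}+R\,\wz\Psi^2(R),\qquad\text{hence}\qquad R\le\frac{\sqrt{2\lz}\,t}{1-3\wz\Psi^2(R)},\]
which is strictly weaker than your claimed $R\le\sqrt{2\lz}\,t\,(1+\wz\Psi^2(R))$ (since $(1-3s)(1+s)<1$ for $s>0$); the rearrangement incurs a factor-of-three loss, essentially because the offset $|x-x^\ast|$ is absorbed by the triangle inequality. The paper instead avoids that offset entirely: it uses the Cauchy--Schwarz inequality $|\gz(t)-x|^2\le t\int_0^t|\dot\gz|^2$ together with the geometric observation $|\gz(t)-x|\ge|\gz(t)-x^\ast|$ — which holds because $x^\ast-x=sa$ with $s>0$ and, by \eqref{distanceest}, $\gz(t)-x^\ast$ is nearly parallel to $a$ — yielding $R/(\sqrt{2\lz}t)\le 1+\wz\Psi^2(R)$ exactly. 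Your constant slack propagates: with $R/(\sqrt{2\lz}t)\le(1-3\wz\Psi^2)^{-1}$ one gets $|R-\sqrt{2\lz}t|\lesssim 4\sqrt{2\lz}t\,\wz\Psi^2$, and your bound on the cross term is $2^{10}$ rather than $2^9$, so the combined estimate in \eqref{e7-3} overshoots $2^9$ by a factor around $4$. This is not a fatal flaw — every downstream use in Section 8 only needs $R/(\sqrt{2\lz}t)\in[\tfrac12,2]$ and an $O(\wz\Psi^2)$ control — but as written your argument does not reproduce the lemma's stated constants. To fix it with your route, either leave the constants larger (and adjust the factor in \eqref{lh} of the final proof), or adopt the paper's geometric step $|\gz(t)-x|\ge R$ to get the sharp $1+\wz\Psi^2(R)$.
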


\begin{proof}
Lemma \ref{key00} implies that
 \begin{equation} \label{almost}
 |\dot\gz(s)|=\sqrt{2(F\circ \gz (s)+ \lz)}, \quad\mbox{ for almost all} \ s\ge 0,
 \end{equation}
 and   hence
 \begin{equation} \label{almost1}  |\dot \gz(s)|^2=\frac 12|\dot \gz(s)|^2+F\circ\gz(s)+\lz, \quad\mbox{ for almost all} \ s\ge 0. \end{equation}

We first show \eqref{e7-2}.  Let  $t\ge \sz_{n_ 0}$. Then
\begin{equation}\label{wzpsit}\mbox{$|\gz(t)-x^\ast|\ge 2^{n_0}$ and hence $\wz \Psi(|\gz(t)-x^\ast|) \le \wz\Psi(2^{n_0})\le 2^{-10}a^\flat \le 2^{-9}$.  }\end{equation}
By \eqref{almost1} and \eqref{sizeest}, one has
\begin{equation}\label{xxxt}\int_{0 }^t|\dot \gz(s)|^2\,ds=A_\lz(\gz|_{[0,t]})=m_{\lz}(x,\gz(t))\le \sqrt{2\lz}|\gz(t)-x^\ast|(1+\wz\Psi^2( |\gz(t)-x^\ast|)).
\end{equation}
Since \eqref{almost} gives $|\dot\gz(t)|\ge \sqrt{2\lz}$,   by \eqref{xxxt} one has
\begin{equation*}
|\gz(t)-x^\ast|[1+\wz\Psi^2( |\gz(t)-x^\ast|)] \ge \sqrt{2\lz} t. \end{equation*}
Note that $\wz \Psi(|\gz(t)-x^\ast|)   \le1$. By \eqref{wzpsit},  one has
$$
\frac{|\gz(t)-x^\ast |} { \sqrt{2\lz} t}\ge [1+\wz\Psi^2(|\gz(t)-x^\ast |]^{-1}\ge \frac12.$$
On the other hand, since
\begin{align*}\frac1t|\gz(t)-x|^2&\le 
 \frac1t\left(\int_{ 0}^t |\dot \gz(s)|\,ds \right )^2
 \le  \int_{0 }^t|\dot \gz(s)|^2\,ds,
\end{align*}
by \eqref{xxxt}, one has
\begin{align*}\frac {|\gz(t)-x|^2}{\sqrt{2\lz} t|\gz(t)-x^\ast| }&\le  1+\wz\Psi^2( |\gz(t)-x^\ast|).
\end{align*}
By \eqref{distanceest} and \eqref{wzpsit},
\[|\frac{\gz(t)-x^\ast}{|\gz(t)-x^\ast|}-a | \le 2^4 \wz\Psi( |\gz(t)-x^\ast|) \le 2^{-5}.\]
It follows that $|\gz(t)-x|\ge |\gz(t)-x^\ast|$ for all $t \ge \sz_{n_ 0}$ (See Figure \ref{f6} for illustration). Since $\wz \Psi(|\gz(t)-x^\ast|)   \le1$ by \eqref{wzpsit},
\begin{align*}\frac {|\gz(t)-x^\ast| }{\sqrt{2\lz} t  }&\le \frac {|\gz(t)-x|^2}{\sqrt{2\lz} t|\gz(t)-x^\ast| } \le  1+\wz\Psi^2( |\gz(t)-x^\ast|)\le 2.
\end{align*}
We therefore obtain \eqref{e7-2}.

By \eqref{xxxt}, \eqref{e7-2} and $\wz \Psi(|\gz(t)-x^\ast|) \le1$,
one has
$$\int_{0 }^t|\dot \gz(s)|^2\,ds \le \sqrt{2\lz}|\gz(t)-x^\ast|(1+\wz\Psi^2( |\gz(t)-x^\ast|))\le 8\lz t,$$
which gives \eqref{e7-1}.

Finally, we prove \eqref{e7-3}.
For $t\ge\sz_{n_0}$,  
by \eqref{angle=dis} one has
$$
  \langle  \frac{\gz(t)-x^\ast}{ |\gz (t)-x^\ast |} ,a\rangle=\cos\angle (\frac{\gz(t)-x^\ast}{ |\gz (t)-x^\ast |},a )=
1- 2\sin^2\frac12\angle (\frac{\gz(t)-x^\ast}{ |\gz (t)-x^\ast |},a )= 1-\frac12 | \frac{\gz(t)-x^\ast}{|\gz(t)-x^\ast|}-a |^2 .$$
By   \eqref{distanceest} and $\Psi(|\gz(t)-x^\ast|) \le 2^{-9}$ as given by \eqref{wzpsit}, one gets
$$
  \langle  \frac{\gz(t)-x^\ast}{ |\gz (t)-x^\ast |} ,a\rangle\ge 1-2^7  \wz\Psi^2(|\gz(t)-x^\ast|)>0  .$$
%
From this and \eqref{e7-2}, we deduce that
\begin{align*}  &\frac{|\gz(t)-x^\ast-\sqrt{2\lz}at|^2   }{ (\sqrt{2\lz}t)^2}\\
&\quad=  \frac{|\gz(t)-x^\ast |^2  }{(\sqrt{2\lz}t)^2}+1-2 \frac { |\gz(t)- x^\ast|} { \sqrt{2\lz}t}  \frac{ \langle  \gz(t)- x^\ast ,a\rangle }{ |\gz(t)- x^\ast|}\\
&\quad\le  [1+\wz\Psi^2(|\gz(t)-x^\ast |) ]^{2}+1- 2  [1+\wz\Psi^2(|\gz(t)-x^\ast |) ]^{-1} [1-2^7  \wz\Psi^2(|\gz(t)-x^\ast|) ] \\
&\quad\le  [1+\wz\Psi^2(|\gz(t)-x^\ast |) ]^{2}+1- 2  [1-\wz\Psi^2(|\gz(t)-x^\ast |) ]  [1-2^7  \wz\Psi^2(|\gz(t)-x^\ast|) ] \\
&\quad\le 2^{9}\wz\Psi^2(|\gz(t)-x^\ast |).  \end{align*}
Since $\Psi(|\gz(t)-x^\ast|) \le 2^{-9}$ and $
 |\gz(t)-x^\ast |\ge \frac12 { \sqrt{2\lz} t}  $,   one has
\begin{align*}  \frac{|\gz(t)-x^\ast-\sqrt{2\lz}at|^2   }{ (\sqrt{2\lz}t)^2} \le 2^{9}\wz\Psi^2(|\gz(t)-x^\ast |)
\le 2^{9}\wz\Psi^2(\frac12\sqrt{2\lz}t).
 \end{align*}
Thus \eqref{e7-3} holds.
The proof is complete.
\end{proof}

\section{Proof of Theorem \ref{mthm}}

Below  we prove Theorem \ref{mthm}
  and hence  Corollary 1.5;
in particular, when  $\mbox{$F=U$ with $f(s)=s^{-1}$}$, we reprove  Theorem 1.3.

\begin{proof}[Proof of Theorem \ref{mthm}]
Given  any $x\in \wz \Omega$, any
 $a\in \bs^{dN-1}$ with $a^\flat>0$, and any
$\lz>0$, let   $x^\ast$, $\Psi$, $\wz\Psi$  and $n_0$ be as in Section 5.  The proofs consists of 5 steps.

\noindent {\it Step 1.}
{\color{black} According to Lemma \ref{ext}, for any $n\in\nn$, let $\gz^{(n)} \in \mathcal {AC}(x,x^\ast+2^{n }a;[0,\sz^{(n)}];\wz\Omega)$ be an $m_\lz$-geodesic with canonical parameter.} For $n\ge j\ge n_0$, set $$\sz^{(n)}_{j}:=\max\{t\in[0,\sz^{(n)}]: |\gz^{(n)} (t)-x^\ast|=2^{j}\}.$$
By \eqref{e7-2}, we have
$$\frac12\le
\frac{|\gz^{(n)} (\sz^{(n)}_j )-x^\ast |}
{ \sqrt{2\lz} \sz^{(n)}_j }= \frac{2^{j}}{\sqrt{2\lz}\sz^{(n)}_j }
\le
 2.$$
 Thus
$$
\frac1{\sqrt{2\lz}} 2^{j-1}\le \sz^{(n)}_j \le \frac1{\sqrt{2\lz}} 2^{j+1}.
$$
Moreover,   $ t\ge \frac1{\sqrt{2\lz}}2^{n_0+1}$ implies that $t\ge \sz^{(n)}_{n_0} $ for all possible {\color{black}$n\ge n_0$}.
Thus, {\color{black}by \eqref{flatgz} from Lemma \ref{unif},}
we have $$\mbox{$(\gz^{(n)}(t))^\flat\ge (x^\ast)^\flat>0$ whenever $ \frac1{\sqrt{2\lz}}2^{n_0+1}\le t\le \sz^{(n)}$ for all possible {\color{black}$n\ge n_0$}. }
$$

\noindent {\it Step 2.} We show that for each $j\ge n_0$,   the family $\{\gz^{(n)} |_{[0,\frac1{\sqrt{2\lz}}2^j]}\}_{n\ge j+1}$ 
is
 uniformly bounded and {\color{black}equi-continuous}. Indeed,
by \eqref{eqk2}, {\color{black}\eqref{distanceest}} and $\Psi(2^j)\le 1$, one has
$$l(\gz^{(n)} |_{[0,\sz^{(n)}_j ]})\le \frac1{\sqrt {2\lz}}m_\lz(x,\gz^{(n)} (\sz^{(n)}_j ))\le 2^{j}+\Psi(2^{j})\le 2^{j+1}, \quad\forall j\ge  n_0 .$$
 By \eqref{e7-1},
$$
 \int_0^{\sz^{(n)}_j }|\dot \gz^{(n)} (s)|^2\,ds \le  8\lz \sz^{(n)}_j \le 4\sqrt{2\lz}2^{j+1}.$$
Thus one has
$$l(\gz^{(n)} |_{[0,\frac1{\sqrt{2\lz}}2^j]})\le l(\gz^{(n)} |_{[0,\sz^{(n)}_{j+1}]})\le 2^{j+2} \quad\mbox{ and hence}\quad \gz^{(n)} |_{[0,\frac1{\sqrt{2\lz}}2^j]}\subset B(x,2^{j+2}).$$
While  for any $0\le s<t\le \frac1{\sqrt{2\lz}} 2^{j } $, we have
$$|\gz^{(n)} (t)-\gz^{(n)} (s)|\le \int_s^t |\dot\gz^{(n)} (\dz)|\,d\dz \le |t-s|^{1/2}\left({\color{black}\int_0^ {\sz^{(n)}_{j+1}}} |\dot\gz^{(n)} (\dz)|^2\,d\dz \right)^{1/2}
\le 4\cdot 2^{j/2}(2\lz)^{1/4} |t-s|^{1/2}.$$

\medskip
\noindent {\it Step 3.} For $j=n_0$, by Arzela-Ascoli theorem,
 there  is  {\color{black}some infinite subset $\mathbb N_j\subset\mathbb N_{j-1}$} such  that {\color{black}the subsequence}
$$\{\gz^{(n)} |_{[0,\frac1{\sqrt{2\lz}}2^{n_0 } ]}\}_{n\in\nn_{n_0}}  \mbox{ converges uniformly to some
curve } \ \eta_0\in C^0([0,\frac1{\sqrt{2\lz}}2^{n_0 }];\rr^{dN})$$
For any $j>n_0$, we can find {\color{black}some infinite subset $\mathbb N_j\subset\mathbb N_{j-1}$}  such that {\color{black}the subsequence}
$$\{\gz^{(n)}  |_{[0,\frac1{\sqrt{2\lz}} 2^j]  }\}_{n \in \mathbb N_j} \ \mbox{ converges uniformly to some curve}\ \eta_j\in C^0([0,\frac1{\sqrt{2\lz}}2^{j }];\rr^{dN}).$$
Since  $\mathbb N_j\subset\mathbb N_{j-1}$ we know that  $$ \eta_j|_{[0, \frac1{\sqrt{2\lz}} 2^{j-1}]}=\eta_{j-1}.$$   This allows us to define a ray
$\gz:[0,\fz)\to\Omega$ via  $$\gz|_{[0,\frac1{\sqrt{2\lz}} 2^j]}:=\eta_j\quad\forall j\ge n_0.$$

We also observe that for any $t\ge \frac1{\sqrt{2\lz}}2^{n_0+1}$, if $j$ is large enough such that $t\le \frac1{\sqrt{2\lz}}2^{j-1}$, then
$$ (\gz (t))^\flat=(\eta_j (t))^\flat= (\lim_{\nn_j\ni n\to\fz}   \gz^{(n)}(t))^\flat =\lim_{\nn_j\ni n\to\fz}  ( \gz^{(n)}(t))^\flat  \ge (x^\ast)^\flat>0.$$
Thus
\begin{equation}\label{gzomega} \gz(t)\in\Omega \quad \mbox{for all}\quad    t\ge \frac1{\sqrt{2\lz}}2^{n_0+1}.
\end{equation}

\medskip

\noindent {\it Step 4.} We show that $\gz$ is an $m_\lz$-geodesic ray with canonical parameter.
To see this, by Lemma \ref{ext}, it suffices to prove that for all
 $j\ge n_0$, $\eta_j$ is an $m_\lz$-geodesic  with canonical parameter, that is,
$\eta_j\in \mathcal {AC}  \left(x,\gz(\frac1{\sqrt{2\lz}}2^j);[0, \frac1{\sqrt{2\lz}}2^j];\wz \Omega \right)$ and  $A_\lz(\eta_j)= m_\lz \left(x,\eta_j(\frac1{\sqrt {2\lz}}2^j) \right)$.

Indeed, since $\frac1{\sqrt{2\lz}}2^j\le \sz^{(n)}_{j+1}$, {\color{black}we can apply Tonelli's Theorem for convex Lagrangians to get
$$ \frac12 \int_0^{\frac1{\sqrt{2\lz}}2^j} |\dot \eta_j(s)|^2 \, ds  \le \frac12 \liminf_{n \to \fz} \int_0^{\frac1{\sqrt{2\lz}}2^j} |\dot \gz^{(n)} (s)|^2 \, ds $$
and Fatou's Lemma to obtain that
$$   \int_0^{\frac1{\sqrt{2\lz}}2^j} F(\eta_j(s)) \, ds  \le  \liminf_{n \to \fz} \int_0^{\frac1{\sqrt{2\lz}}2^j} F(\gz^{(n)}(s) ) \, ds. $$
Therefore, we conclude
\begin{equation}\label{Aeta}
A_\lz(\eta_j)\le \liminf_{n \in \nn_j,  n\to\fz} A_\lz \left(\gz^{(n)} |_{[0,\frac1{\sqrt{2\lz}}2^j]} \right) \le
  \liminf_{n \in \nn_j, n\to\fz} m_\lz \left(x, \gz^{(n)} ( \sz^{(n)}_{j+1}) \right)\le \sqrt{2\lz} 2^{j+2}.
\end{equation} }
Thus $\eta_j\in \mathcal {AC} (x,\gz(\frac1{\sqrt{2\lz}}2^j);[0, \frac1{\sqrt{2\lz}}2^j];\wz \Omega )$.
Obviously, $m_\lz(x,\eta_j(\frac1{\sqrt {2\lz}}2^j))\le A_\lz(\eta_j)$.

Below we show that $\displaystyle A_\lz(\eta_j)\le m_\lz\left(x,\eta_j (\frac1{\sqrt {2\lz}}2^j)\right)$.
Since $\gz^{(n)}$ is an $m_\lz$-geodesic with canonical parameter, one has
$$ A_\lz \left(\gz^{(n)} |_{[0,\frac1{\sqrt{2\lz}}2^j]} \right)= m_\lz \left(x, \gz^{(n)}(\frac1{\sqrt {2\lz}}2^j) \right)\le
m_\lz \left(x, \gz (\frac1{\sqrt {2\lz}}2^j) \right) + m_\lz \left(\gz (\frac1{\sqrt {2\lz}}2^j), \gz^{(n)} (\frac1{\sqrt {2\lz}}2^j) \right).$$
By Lemma \ref{klem}, write $z_{n,j}=\gz (\frac1{\sqrt {2\lz}}2^j)- \gz^{(n)} (\frac1{\sqrt {2\lz}}2^j)$ one has
$$m_\lz \left(\gz (\frac1{\sqrt {2\lz}}2^j), \gz^{(n)} (\frac1{\sqrt {2\lz}}2^j) \right)\le
\sqrt{2\lz}|z_{n,j}| + {\color{black}\frac{1}{\sqrt{2\lz}}}\int_{0}^{|z_{n,j}|}F \left(\gz (\frac1{\sqrt {2\lz}}2^j)+s\frac{ z_{n,j} }{|z_{n,j}|} \right)\,ds. $$
Since $F$ is bounded in any compact suhset of $\Omega$,    noting $|z_{n,j}|\to 0$ as $\nn_j\ni n\to\fz$ we know that
$$\lim_{n \in \nn_j, n\to\fz} m_\lz \left(\gz (\frac1{\sqrt {2\lz}}2^j), \gz^{(n)} (\frac1{\sqrt {2\lz}}2^j) \right)=0.$$
From this and \eqref{Aeta}, we conclude that
$$ A_\lz(\eta_j)\le \limsup_{n \in \nn_j, n\to\fz}A_\lz \left(\gz^{(n)} |_{[0,\frac1{\sqrt{2\lz}}2^j]} \right) \le
m_\lz \left(x, \gz (\frac1{\sqrt {2\lz}}2^j) \right) $$
as desired.

\medskip
\noindent {\it Step 5.}
For any $t>\frac1{\sqrt {2\lz}}2^{n_0}$,
letting $j>n_0$  such that  $  t<\frac1{\sqrt{2\lz}}2^{j}$,  by \eqref{e7-3}  one has
\begin{align}\label{lh}
\frac{|\gz(t)-x^\ast-\sqrt{2\lz}at|    }{  \sqrt{2\lz}t }&=\frac{|\eta_j(t)-x^\ast-\sqrt{2\lz}at|    }{  \sqrt{2\lz}t }\nonumber\\
&=\lim_{n \in \nn_j, n\to\fz}
\frac{|\gz^{(n)} (t)-x^\ast-\sqrt{2\lz}at|   }{  \sqrt{2\lz}t }\nonumber \le 2^5 \wz \Psi(\frac12\sqrt{2\lz}t) .  \end{align}
Hence
$$\lim_{t\to\fz}
\frac{|\gz(t)- \sqrt{2\lz}at|^2   }{ (\sqrt{2\lz}t)^2} = \lim_{t\to\fz}
\frac{|\gz(t)-x^\ast-\sqrt{2\lz}at|^2   }{ (\sqrt{2\lz}t)^2}=0,$$
That is,  Theorem \ref{mthm} (i) holds.

Theorem \ref{mthm} (iv) follows from Theorem \ref{mthm} (i) and Lemma A.4.
Moreover, Theorem \ref{mthm} (ii) follows from \eqref{gzomega} with $t_0=\frac1{\sqrt{2\lz}}2^{n_0+1}$.
Theorem \ref{mthm} (iii) follows from  Theorem \ref{mthm} (i), Theorem \ref{mthm} (ii)  and Lemma A.4.

We end the proof of Theorem \ref{mthm}.
\end{proof}

\renewcommand{\thesection}{Appendix A}
 \renewcommand{\thesubsection}{ A }
\newtheorem{lemapp}{Lemma \hspace{-0.15cm}}
\newtheorem{corapp}[lemapp] {Corollary \hspace{-0.15cm}}
\newtheorem{remapp}[lemapp]  {Remark  \hspace{-0.15cm}}
\newtheorem{defnapp}[lemapp]  {Definition  \hspace{-0.15cm}}
\newtheorem{egapp}[lemapp]  {Example  \hspace{-0.15cm}}
\renewcommand{\theequation}{A.\arabic{equation}}

\renewcommand{\thelemapp}{A.\arabic{lemapp}}

\section{Properties of Hamiltonians and Ma\~{n}\'{e}'s potentials}

In the appendix we always assume that $m_i=1$ for all $1\le i\le N$. {\color{black} For general masses, all of the following conclusions still hold, up to some obvious modifications.} We omit the details.

Let $F$ be as in \eqref{F}. Apriori, $F$ is only defined in the set $ \Omega$ since
 $F_{ij}$ in only defined in $\rr^{d}\times\rr^d\setminus\Delta$ for all $1\le i<j\le \fz$.
Now  we extend the definition of  $F$  to the whole $ \rr^{dN}$ by defining the value of
$F_{ij}$ for all $1\le i<j\le \fz$ as below:
$$F_{ij}(z,z)=\liminf_{(x_i,x_j)\to(z,z)}F_{ij}(x_i,x_j), \quad \forall (z,z)\in\Delta.$$
It may happen that $F_{ij}(z,z)=\fz$ for some $z\in\rr^d$, and hence $F(x)=\fz$ for some $x\in\Sigma$.
But one may directly check that  that  $F_{ij}$ is   lower semicontinuous  in $\rr^d\times\rr^d$, that is,
the set
$\{(x_i,x_j)\in\rr^d\times\rr^d: F_{ij}(x_i,x_j)>\dz\}$ is open for all $\dz>0$.
Thus $F$ is also lower semicontinuous  in whole $\rr^{dN}$.

For any $\lz>0$, Ma\~{n}\'{e}'s potential $m_\lz$ {\color{black} is defined in} \eqref{mane2}.
Observe that    \begin{equation}\label{ew.1}
 \mbox{$m_\lz(x,y)\ge \sqrt{2\lz}|x-y|$ for all $x,y\in\rr^{dN}$.}
\end{equation}Indeed,
for any $\gz\in \mathcal {AC}(x,y;[0,\sz],\rr^{dN})$,
by the Cauchy-Schwartz inequality,    its Euclidean length
$$ l(\gz) \le (2\sz  )^{1/2}\left(\int_0^\sz  \frac12|\dot\gz(s)|^2 \,ds\right)^{1/2}\le  (2\sz )^{1/2}\left(A_\lz-\lz \sz \right)^{1/2},$$
and hence by $|x-y|\le l(\gz)$, one has
$$A_\lz(\gz)\ge  \frac1{2\sigma }  l(\gz) ^2  +\lz \sigma \ge \frac1{2\sigma  }  |x-y| ^2  +\lz \sigma.$$
Since  $\frac1{2\sigma }  |x-y| ^2  +\lz \sigma$ {\color{black}reaches its minimal value} at $\sigma= |x-y|/\sqrt {2\lz}$, we have $A_\lz(\gz)\ge \sqrt{2\lz}|x-y|$  as desired.

 On the other hand,  we have
$$m_\lz(x,y) <\fz, \, \, \quad\forall x,y\in\Omega.$$
Indeed, since $\Omega$ is path connected, one can always find a smooth curve $\gz_{x,y}\in \mathcal{AC}(x,y;[0,\sz];\Omega)$ for some $\sz>0$.
By the semicontinuity of $F$  and  $F<\fz$ in $\Omega$, we know that
 $F$ is bounded in the compact set $\gz([0,\sz])$ and hence we know that  $A_\lz(\gz)<\fz$.
But  for  $x\in\Omega $ and $y\notin\Omega$, it is not clear whether  $ m_\lz(x,y) $ is finite or not.
This is indeed determined by the behaviour of $F$ around  $\Sigma$.

Set  $$\Omega_\lz:=\{y\in\rr
^{dN}: m_\lz (x,y)<\fz, \ \forall x\in\Omega\}.$$
Obviously, $\Omega \subset \Omega_\lz\subset\rr^{dN}$.
We observe that  $\Omega_\lz=\Omega_\mu$ for all $0<\lz<\mu<\fz$. To see this, it suffices to show that
$$\mbox{$m_\lz(x,y)<\fz$ if and only if  $m_\mu(x,y)<\fz$  for any $x\in\Omega,y\in\rr^{dN}$.}$$
Since $A_\lz(\gz)\le A_\mu(\gz)$ for all possible $\gz$, by definition, one always has  $ m_\lz(x,y)\le m_{\mu}(x,y)$.
On the other hand,  if $m_\lz(x,y)<\fz$, we have $A_\lz(\gz)<\fz$ for some $\gz\in \mathcal {AC}(x,y;[0,t];\rr^{dN})$. Thus   $$A_\mu(\gz)\le A_\lz(\gz)+ (\mu-\lz) t<\fz,$$   which implies $m_\mu(x,y)<\fz$ as desired.  Recalling \eqref{wzoz}, we write $\wz\Omega=\Omega_\lz$ for any $\lz>0$.

{\color{black} One may directly check that  $ m_\lz$ is a distance 
 in $\wz\Omega$, and hence   
  $(\wz \Omega,m_\lz)$ is a  metric space.
 Observe that  $(\wz \Omega,m_\lz)$ is always   complete.
Indeed, let $\{y_n\}_{n\in\nn}\subset \wz \Omega $ be a Cauchy sequence with respect to $m_\lz$. 
Recall that \eqref{ew.1} gives $|z-w|\le \frac1{\sqrt{2\lz}}m_\lz(z,w)$ for all $z,w\in\rr^{dN}$.
It follows that $\{y_n\}_{n\in\nn}$ is also a Cauchy sequence with respect to the Euclidean distance, and hence $|y_n-y|\to 0$ as $n\to\fz$ for some $y\in\rr^{dN}$.
It then suffices to show that  $y\in\wz\Omega$ and $m_\lz(y_n,y)\to 0$ as $n\to\fz$.
{\color{black}If there exists $m\in\nn$ such that  $y_n=y$ for all $n\ge m$, then $y_m\in \wz\Omega$ implies $y\in\wz \Omega$ and $m_\lz(y_n,y)=0$ for all $n\ge m$ as desired.}
Now we assume that there are infinitely many $k$ such that $y_k\ne
y$.  Thanks to this, 
it is  standard to  find a subsequence $\{y_{n_k}\}_{k\in\nn}$ of $\{y_n\}_{n\in\nn}$
such that $0<m_\lz(y_{n_k},y_{n_{k+1}})\le 2^{-k}$.  For each $k\in\nn$,
there is  $ \gz_k \subset \mathcal {AC}(y_{n_k},y_{n_{k+1}};[0,\sz_k];\rr^{dN})$  such that
$A_\lz(\gz_k)\le 2m_\lz(y_{n_k},y_{n_{k+1}}) \le 2^{-k+1}$.
Fix an arbitrary $x \in \Omega$.  There is
  $ \gz_0  \subset \mathcal {AC}(x,y_{n_1};[0,\sz_0];\rr^{dN})$ such that
  $A_\lz(\gz_0)\le 2m_\lz(x,y_1) <\fz$.
Noting that $\sz_k\le \frac1\lz A_\lz(\gz_k)$ for all $k\in\nn\cup\{0\}$, it implies that
\[  \sz:=\sum_{k=0}^\fz\sz_k \le \frac1\lz\sum_{k=0}^\fz A_\lz(\gz_k) \le \frac1\lz\sum_{k=0}^\fz  2^{-k+1}<\fz.\]  Then the concatenation  $\gz$ of these curves $\gz_k$  (that is,
 {\color{black}$\gz=\ast_{k}\gz_k$)} belongs to $\mathcal {AC}(x,y;[0,\sz];\rr^{dN})$ and satisfies
 $A_\lz(\gz)\le \sum_{k=0}^\fz A_\lz(\gz_k)<\fz$. Thus   $m_\lz(x,y)<\fz$, that is, $y\in\wz \Omega$, and moreover $$m_{\lz}(y_{n_k},y)\le \sum_{j=k}^\fz A_\lz(\gz_j)
 \le  \sum_{j=k}^\fz  2^{-j+2}=  2^{-k+3},$$
 which implies $m_{\lz}(y_{n_k},y)\to0$ as $k\to\fz$.
 Since  $\{y_n\}_{n\in\nn}$ is a Cauchy sequence with respect to $m_\lz$,
 by  a standard argument, one further gets $m_\lz(y_n,y)\to 0$ as $n\to\fz$ as desired.
 }


{\color{black}    Recall that $\Omega\subset \wz\Omega$, and $\Omega$ is an open subset of $\wz\Omega$ with respect to $m_\lz$.
Thus   $(\Omega, m_\lz)$   is  also a metric space as a subspace of    $(\wz \Omega,m_\lz)$.
Since there is no any other assumption made on the behaviour  of $F$ around $\Sigma$ (except the lower semicontinuity), $\Omega$  is not necessarily closed
or complete in general. It is interesting to determine the relation between $(\Omega,m_\lz)$,  or its completion/closure,  and $(\wz \Omega, m_\lz)$.
 In the following remark, we have some discussion about this issue. 

\begin{remapp} \label{eg1} \rm
(i) It is easy to see that  if
\begin{align*}
  \mbox{$  F_{ij}(x_i,x_j) \ge  C |x_i-x_j|^{-2}$  when $|x_i-x_j|\le \frac12$ for all $1\le i<j\le N$,}
\end{align*}
then $\wz\Omega=\Omega$, and hence,  $(\Omega,m_\lz)$ is complete.  One may also check that if
\begin{align*}
   \mbox{ $| F_{ij}(x_i,x_j)|\le C|x_i-x_j|^{-\alpha}$  when $|x_i-x_j|\le \frac12$ for all $1\le i<j\le N$ and $\alpha \in (0, 2)$,}
\end{align*}
then $\Omega \subsetneqq \wz\Omega=\rn$.
But in general, it is difficult to determine the set $\wz\Omega$. For example, see \cite{ch02, cm00, m02, ft04, ch08, bft08, btv13, mo18} and references therein.

(ii) If  $F=+\fz$ in $\Sigma= \rr^{dN}\setminus \Omega$, 
then $\Omega$ is dense in $(\wz\Omega,m_\lz)$, equivalently,
the completion of $( \Omega,m_\lz)$ is  $(\wz \Omega, m_\lz)$.
 Indeed, if $\Omega \subsetneqq \wz \Omega$, given any  $y \in \wz \Omega \setminus \Omega$ and
 $x \in \Omega$, there is $ \gz   \in \mathcal {AC}(x,y ;[0,\sz];\rr^{dN})$ with $A_\lz(\gz)<\fz$. Since $F=\fz$ in $\Sigma$, we know that
 $\gz(t)\subset\Omega$  for almost all $t\in[0,\sz]$. Therefore there exists a sequence $\{t_n\}\subset [0,\sz]$ so that $t_n\to \sz$ with $\gz(t_n)\in\Omega$, and  $A_\lz(\gz|_{[t_n,\sz]})\to 0$, which implies  $m_\lz(\gz(t_n), y) \to 0$ as $n \to \fz$, that is, $y$ is the limit of the Cauchy sequence $\{\gz(t_n)\} \subset \Omega$ with respect to $m_\lz$.
 This shows the closure $\overline \Omega^{m_\lz}$ of $\Omega$ with respect to $m_\lz$ contains $\wz \Omega$. On the other hand, since $\Omega
 \subset\wz\Omega$ and $\wz \Omega$  is complete with respect to $m_\lz$, it follows that $\overline \Omega^{m_\lz} \subset \wz \Omega$. We conclude that
  $\overline \Omega^{m_\lz}=\wz \Omega$.

(iii)    Without  the assumption $F=+\fz$ in $\Sigma$, in general
 one can not expect that   $\Omega$ is dense in $(\wz\Omega,m_\lz)$, equivalently, the completion of $( \Omega,m_\lz)$ is  $(\wz \Omega, m_\lz)$.
Indeed, we construct a   potential $F:\rr^{dN}\to (0,\fz)$
so that $\Omega$ is not dense in $(\wz\Omega,m_\lz)$; see the following  3 steps.

\medskip
{\bf Step 1.}  Constructions of   $h_\pm:[0,\fz)\to(0,\fz)$ by modifying $r^{-2}$.\\

Let   $h_\pm:[0,\fz)\to(0,\fz)$ be  two   functions   defined by
\begin{equation*}
h_+(r)=\left\{
\begin{split}
  1\ \quad\quad & \mbox{when}\ 0\le r\le 1;\\
 \frac1{r^2}\quad\quad& \mbox{when}\ 1<r<\fz
\end{split}\right.
\end{equation*}
and
\begin{equation*}
h_-(r)=\left\{
\begin{split}
&1 \quad& \mbox{when}  \ r\in \Lambda:=\{0\}\cup(\cup_{k\ge3}  [2^{-k^2-1}, 2^{-k^2+1}]) ; \\
&\frac1{r^2}\quad& \mbox{ when }\ r\in(0,\fz)\setminus \Lambda.
\end{split}\right.
\end{equation*}
 Obviously, $h_+\in C^0([0,\fz))$, and
    $ h_-$ is lower semicontinuous in $[0,\fz) $ with
   $$\displaystyle h_-(r)=\liminf_{t\to r}h_-(t)\quad\forall \ 0\le r<\fz.$$
moreover, $h_+(r)\le h_-(r)\le r^{-2}$ for all $0<r<\fz$, and
 $h_+=h_-$ in $\Lambda \cup [1,+\fz)$.

\medskip
  {\bf Step 2.} 
Construction of a potential  $F:\rr^{dN}\to(0,\fz)$  via  $h_\pm$.\\

 For any nonempty subset $E \subset \rr^{2d}$, recall the standard characteristic function $\chi_{E}: \rr^{2d} \to \{0, 1\}$, which maps the elements of $E$ to $1$, and all other elements to $0$. Define
 \begin{equation*}
F(x):= F_{12}(x_1,x_2) \mbox{ for all  $x=(x_1, \cdots, x_N) \in \rr^{dN}$}
\end{equation*}
with
 $$F_{12}(x_1,x_2):= h_-(|x_1-x_2|)\chi_{ \Gamma \times \Gamma}+
  h_+(|x_1-x_2|)\chi_{\rr^{d}\times \rr^d \setminus [\Gamma \times \Gamma]},$$
where
$$ \mbox{$x_i=(x_i^1, \cdots, x_i^d)  \in \rr^{d }$ and } \Gamma:= \{x_i \in \rr^{d } :  x_i^j < 0 , \ \forall 1 \le j\le d \}.$$

Obviously,   $F_{12}$ is locally bounded   in $\rr^{d}\times\rr^d\setminus\{(x_1,x_2)\in\rr^{d}\times\rr^d : x_1=x_2, \, x_1\in\overline{\Gamma}\},$
and $F_{12}(x_1,x_2)= 1$ whenever $  |x_1-x_2|\in   \Lambda$. Moreover, $F_{12}$ is lower semicontinuous in $\rr^d\times\rr^d$ with
$$F_{12}(x_1,x_2)=\liminf_{(z_1,z_2)\to(x_1,x_2)}F_{12}(z_1,z_2) \mbox{ for all
$ (x_1,x_2)\in\rr^{2d}$}.$$
In particular,  $F$ satisfies the condition \eqref{F}.

 Note that
$$\limsup_{(z_1,z_2)\to (x_1,x_2)} F_{12}(z_1,z_2)=+\fz\quad
\mbox{whenever $x_1=x_2$ and $x_1\in\Gamma $.}$$
Below we write
$$\Sigma^-_{12}:=\{x=(x_1,\cdots,x_N)\in\rr^{dN}: x_1=x_2\in\Gamma\} \subset \Gamma\times\Gamma\times\rr^{d(N-2)} $$
and
$$\Sigma_{12}:=\{x=(x_1,\cdots,x_N)\in\rr^{dN}: x_1=x_2\}. $$
{\color{black} Notice that the local boundedness of $F_{12}$ in $\rr^{d}\times\rr^d\setminus\{(x_1,x_2)\in\rr^{d}\times\rr^d : x_1=x_2, \, x_1\in\overline{\Gamma}\}$ implies the local boundedness of $F$ in $\rr^{dN}\setminus \overline{\Sigma_{12}^-}$. In particular, $F$ is locally bounded in $\Sigma \setminus \Sigma_{12}$. }

\medskip
{\bf Step 3.}  We show that
$\wz\Omega =\rr^{dN}$ and $ \Sigma_{12}^{-}\subset  \wz\Omega\setminus \overline \Omega^{m_\lz}$.\\

We  first show that  $\wz\Omega =\rr^{dN}$. It suffices to show that $\Sigma \subset\wz\Omega$.
We consider three cases: $x \in \Sigma \setminus \Sigma_{12}$, $x \in \Sigma_{12} \setminus \overline{\Sigma_{12}^-}$
and $x \in  \overline{\Sigma_{12}^-}$.

\medskip
{\color{black}{\it   Case   $x \in \Sigma \setminus \Sigma_{12}$.} By Step 2, we know that $F$ is locally bounded at $x$. Since $\rr^{dN}\setminus \Sigma_{12}$ is a connected open subset, there exists an Euclidean ball $B(x,r_x) \subset \rr^{dN}\setminus  \Sigma_{12}  $ for some $r_x>0$ such that $F|_{B(x,r_x)} \le M_x$ for some $M_x>0$.}
  Noting that   $\Sigma=\rr^{dN}\setminus\Omega$ is closed and does not have interior points, we can find {\color{black} a curve $\gz_x \in C^1([0,t] , B(x,r_x)) $} with end point $x$ such that $\gz_x \setminus \{x\} \subset \Omega$ and {\color{black}$F|_{\gz_x}  \le M_x$.} Therefore, we  see that
$m_\lz(\gz_x(0),x)\le A_\lz(\gz_x)<\fz$, that is, $x \in \wz \Omega$.

\medskip
{\color{black}{\it   Case   $x \in \Sigma_{12} \setminus \overline{\Sigma_{12}^-}$.} By the definition of $F$, we know $F(x)=1$. Note that $\rr^{dN}\setminus \overline{\Sigma_{12}^-}$ is a connected open subset of $\rr^{dN}$. Similar to the first case, there is an Euclidean ball $B(x,r_x) \subset \rr^{dN}\setminus \overline{\Sigma_{12}^-}$ for some $r_x>0$ such that $F|_{B(x,r_x)} \le 2$, and moreover,
 we can find a curve $\gz_x \in C^1([0,t] , B(x,r_x)) $ with end point $x$ such that $\gz_x \setminus \{x\} \subset \Omega$ and $F|_{\gz_x}  \le 2$. Therefore, we
 see  that
$m_\lz(\gz_x(0),x)\le A_\lz(\gz_x)<\fz$, that is, $x \in \wz \Omega$.}

\medskip
{\it Case  $x \in   \overline{\Sigma_{12}^-}$.} Letting $y = (y_i)_{1}^N \in   \Sigma_{12}\setminus \overline{\Sigma_{12}^-}$ such that $y_1=y_2=(1,\cdots,1) \in \rr^d$ and $y_i=x_i \in \rr^d$ for $i=3,\cdots,N$, taking $\gz$ as the line-segment joining $y$ and $x$, then   $\gz\subset \Sigma_{12}$ and hence
$  F|_{\gz}\equiv 1$.
We have $m_\lz(x,y)<A_\lz(\gz)<\fz$. Since $y \in \Sigma_{12}\setminus \overline{\Sigma_{12}^-} \subset \wz\Omega$, there exists some $x_0 \in \Omega$, such that $m_\lz(x,x_0) \le m_\lz(x,y) + m_\lz(y, x_0)<\fz$. Hence, $x \in \wz\Omega.$

Finally, we show that
$\Sigma_{12}^{-}\subset \wz \Omega\setminus \overline \Omega^{m_\lz}$.
Let  $x\in \Sigma_{12}^{-}$ be an arbitrary point.
It then suffices to show that
\begin{equation}\label{ea.x1}
 \inf_{z\in\Omega} m_\lz(z,x)   >0.
\end{equation}
 To prove this, we need   some properties of $m_\lz$-geodesics joining $z,x$ which will be introduced/proved below.
So we postpone the  proof of  \eqref{ea.x1} to the end of this appendix.
\end{remapp}}

Moreover,   we know that $(\wz\Omega, m_\lz)$ {\color{black} is a geodesic space}, that is,  for any
$x , y \in \wz \Omega $ with $x\ne y$,  there exists a
$\eta\in \mathcal {AC}(x, y; [0,m_\lz(x,y)];\rr^{dN})$ such that
$$
\mbox{$\eta([0,m_\lz(x,y)])\subset\wz\Omega$,
and
$m_\lz(\eta(s),\eta(t))=t-s$  for all $0\le s<t\le m_\lz(x,y)$.}$$
Indeed, thanks to Lemma \ref{ext} below, the desired geodesic  $\gz$   comes from the arc length (with respect to $m_\lz$) parametrisation of  the following minimizer of
$A_\lz$ in the class of $\cup_{\sz>0}\mathcal {AC}(x, y; [0,\sz];\wz\Omega)$.
 \begin{lemapp}  \label{ext}
Given $\lz>0$ and $x , y \in \wz \Omega $ with $x\ne y$,
there is a curve $\gz  \in \mathcal {AC}(x, y; [0,\sz];\wz\Omega)$ such that $A_{\lz}(\gz ) = m_\lz(x, y)$.
Moreover,  for any $\gz  \in \mathcal {AC}(x, y; [0,\sz];\wz\Omega)$ satisfying $A_{\lz}(\gz ) = m_\lz(x, y)$, we have
 $$m_\lz(\gz(s ),\gz(t))=A_\lz(\gz|_{[s ,t ]}), \quad\forall 0\le s <t\le \sz.$$

\end{lemapp}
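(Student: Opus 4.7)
My plan is to prove existence of a minimizer of $A_\lz$ via the direct method of the calculus of variations, and then to derive the dynamic-programming identity by a standard concatenation argument. Since the length of the parametrization interval is not fixed, I reparametrize every admissible curve on the common interval $[0,1]$ and treat the length as an additional scalar variable.

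\textbf{Step 1: Uniform estimates on a minimizing sequence.}
Let $\gz_n\in \mathcal {AC}(x,y;[0,\sz_n];\wz\Omega)$ satisfy $A_\lz(\gz_n)\to m_\lz(x,y)$. Repeating the Cauchy-Schwarz argument used to prove \eqref{ew.1} yields
$$ A_\lz(\gz_n)\ge \frac{|x-y|^2}{2\sz_n}+\lz\sz_n, $$
which together with $A_\lz(\gz_n)\ge \lz\sz_n$ forces $\sz_n$ to lie in a compact subinterval of $(0,\fz)$. Passing to a subsequence I may assume $\sz_n\to\sz_\fz\in(0,\fz)$. Setting $\wz\gz_n(s):=\gz_n(\sz_n s)$ for $s\in[0,1]$, one computes
$$A_\lz(\gz_n)=\int_0^1 \left[\frac{1}{2\sz_n}|\dot{\wz\gz}_n|^2+\sz_n(F\circ\wz\gz_n+\lz)\right] ds,$$
so $\int_0^1|\dot{\wz\gz}_n|^2\,ds$ is uniformly bounded. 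By Cauchy-Schwarz once more, $\{\wz\gz_n\}$ is uniformly $1/2$-H\"older, and Arzel\`a-Ascoli yields a uniformly convergent subsequence $\wz\gz_n\to\wz\gz_\fz\in C^0([0,1];\rr^{dN})$ with $\wz\gz_\fz(0)=x$ and $\wz\gz_\fz(1)=y$.

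\textbf{Step 2: Lower semicontinuity and the limit curve.}
Tonelli's weak lower semicontinuity of the convex kinetic integrand, combined with $\sz_n\to\sz_\fz$, gives
$$\int_0^1\frac{1}{2\sz_\fz}|\dot{\wz\gz}_\fz|^2\,ds\le \liminf_n\int_0^1\frac{1}{2\sz_n}|\dot{\wz\gz}_n|^2\,ds,$$
while Fatou's lemma applied to the nonnegative lower semicontinuous $F$ (via the uniform convergence $\wz\gz_n\to\wz\gz_\fz$) delivers the analogous inequality for the potential term. Setting $\gz(t):=\wz\gz_\fz(t/\sz_\fz)$ on $[0,\sz_\fz]$, it follows that $A_\lz(\gz)\le m_\lz(x,y)$. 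In particular $A_\lz(\gz|_{[0,t]})<\fz$ for every $t\in[0,\sz_\fz]$, so $m_\lz(x,\gz(t))<\fz$ and hence $\gz([0,\sz_\fz])\subset \wz\Omega$. Since the reverse inequality $A_\lz(\gz)\ge m_\lz(x,y)$ is immediate from the definition of $m_\lz$, the curve $\gz$ (with $\sz:=\sz_\fz$) is the desired minimizer.

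\textbf{Step 3: Dynamic programming identity.}
For $0\le s<t\le\sz$, the definition of $m_\lz$ gives $m_\lz(\gz(s),\gz(t))\le A_\lz(\gz|_{[s,t]})$. Conversely, if this inequality were strict, I could pick $\eta\in \mathcal {AC}(\gz(s),\gz(t);[0,\tau];\wz\Omega)$ with $A_\lz(\eta)<A_\lz(\gz|_{[s,t]})$ and consider the concatenation of $\gz|_{[0,s]}$, $\eta$ and $\gz|_{[t,\sz]}$; this curve lies in $\mathcal {AC}(x,y;[0,s+\tau+\sz-t];\wz\Omega)$ and has action strictly less than $A_\lz(\gz)=m_\lz(x,y)$, a contradiction.

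\textbf{Main obstacle.}
The most delicate point is keeping the limit curve inside $\wz\Omega$: since $F$ is only lower semicontinuous and may equal $+\fz$ on a portion of $\Sigma$ (as illustrated by the example in Remark A.1(iii)), the limit could a priori hit the collision set badly. The finite-action argument above resolves this because every point on $\gz$ sits at finite $m_\lz$-distance from $x$. A secondary technical point is that the parameter length $\sz$ varies along the minimizing sequence; I handle this by the reparametrization to $[0,1]$ rather than by an arc-length change of variables, which would require an a priori lower bound on the speed that is not available in our setting.
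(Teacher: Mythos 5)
Your proposal is correct and follows essentially the same route as the paper: pick a minimizing sequence, show $\sz_n$ is bounded away from $0$ and $\fz$ via the inequality $A_\lz\ge\frac{|x-y|^2}{2\sz}+\lz\sz$, reparametrize to a fixed interval, extract a uniformly convergent subsequence by Arzel\`a--Ascoli, pass to the limit by Tonelli (kinetic term) plus Fatou and lower semicontinuity of $F$ (potential term), conclude $\gz\subset\wz\Omega$ from finiteness of the action, and get the dynamic-programming identity by contradiction via concatenation. The only superficial difference is that you rescale all curves to $[0,1]$ with an explicit $1/\sz_n$ weight, whereas the paper rescales to $[0,\sz_0]$; both are equivalent.
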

The proof of Lemma A.2  is standard, for  readers' convenience we give it  later.

Considering the geodesic nature,  as in the introduction we call a minimizer $\gz:[0,\sz]\to\wz\Omega$ of $A_\lz$ with given endpoints $x,y$  as an {\it $m_\lz$-geodesic with the canonical parameter} joining $x,y$.
 We also call a  ray  $\gz:[0,\fz)\to\wz\Omega$ as an {\it $m_\lz$-geodesic ray  with the canonical parameter} if
$\gz|_{[0,\sz]}$ is an $m_\lz$-geodesic with the canonical parameter  for any $\sz>0$.

Next we prove the following result.
\begin{lemapp}\label{key00}
Given any $x,y\in\wz\Omega$ and $\lz>0$, let
  $\gz\in \mathcal {AC}(x,y;[0,\sz], \rr^{dN})$ be any
 $m_\lz$-geodesic    with canonical parameter. Then $\gz$   has energy constant $\lz $, that is,
 $$ |\dot \gz(s)|=\sqrt{2(F(\gz(s))+\lz)} \mbox{ almost all $s\in[0,\sz]$.  }$$
Moreover,  $$
m_\lz(x,y)= \wz m_\lz(x,y)=\wz A_\lz(\gz), 
$$
where  \begin{equation*} 
\wz m_\lz(x,y):=\inf\{ \wz A_\lz (\eta):\eta \in \cup_{\tau>0}\mathcal {AC}(x,y;[0,\tau], \rr^{dN})\}\quad\forall x,y\in\rr^{dN},
\end{equation*}
and   \begin{equation*} 
\wz A_\lz(\eta):=\int_0^{\tau} |\dot \eta(s)|\sqrt{2 F\circ \eta+2 \lz }\,ds.\end{equation*}
{\color{black}Here in the above inequality, we use the convention that $|\dot \eta(s)|\sqrt{2 F\circ \eta+2 \lz} =0$ when $|\dot \eta(s)|=0$ and $F\circ \eta(s) =\fz$.}
\end{lemapp}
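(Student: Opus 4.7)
The plan is to establish three linked facts: a pointwise AM--GM inequality relating the two Lagrangians, the reparametrization invariance of $\wz A_\lz$, and the identification $m_\lz(x,y)=\wz m_\lz(x,y)$; the energy-constant identity for any minimizer $\gz$ will then fall out immediately from tracking the equality case of AM--GM. First I would record the elementary inequality
\[
\tfrac{1}{2}|v|^2 + F(x) + \lz \;\ge\; |v|\sqrt{2(F(x)+\lz)}, \qquad \text{equality iff } |v|^2 = 2(F(x)+\lz),
\]
valid in $[0,+\fz]$ with the stated convention $0\cdot\fz=0$. Applied pointwise a.e.\ to any $\eta \in \mathcal{AC}(x,y;[0,\tau];\rr^{dN})$ and integrated, this yields $A_\lz(\eta)\ge \wz A_\lz(\eta)\ge \wz m_\lz(x,y)$, and taking the infimum gives $m_\lz(x,y)\ge \wz m_\lz(x,y)$. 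In particular, for the minimizer $\gz$ we get $A_\lz(\gz)=m_\lz(x,y)\ge \wz A_\lz(\gz)\ge \wz m_\lz(x,y)$.

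Next I would prove the reverse inequality $\wz m_\lz(x,y)\ge m_\lz(x,y)$ by a reparametrization-to-arc-length argument for the Maupertuis-type functional $\wz A_\lz$. Given $\eta$ with $\wz A_\lz(\eta)<\fz$, define
\[
t(s) := \int_0^s \frac{|\dot\eta(r)|}{\sqrt{2(F\circ\eta(r)+\lz)}}\,dr, \qquad s\in[0,\tau],
\]
using the convention that the integrand is $0$ wherever $|\dot\eta|=0$; note $t$ is non-decreasing, absolutely continuous, and constant on each maximal interval where $|\dot\eta|=0$ (on which $\eta$ is constant as well). Let $s(\cdot)$ be any measurable right-inverse and set $\eta^*(t):=\eta(s(t))$ on $[0,t(\tau)]$. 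A direct change-of-variables computation shows $|\dot\eta^*(t)|^2 = 2(F\circ\eta^*(t)+\lz)$ a.e., so $\eta^*$ realizes equality in AM--GM pointwise; hence
\[
A_\lz(\eta^*) \;=\; \wz A_\lz(\eta^*) \;=\; \int_0^\tau |\dot\eta(r)|\sqrt{2(F\circ\eta(r)+\lz)}\,dr \;=\; \wz A_\lz(\eta).
\]
In particular $\int_0^{t(\tau)}|\dot\eta^*|^2\,dt = 2\wz A_\lz(\eta)<\fz$, so $\eta^* \in H^1 \subset \mathcal{AC}(x,y;[0,t(\tau)];\rr^{dN})$ is an admissible competitor for $m_\lz$, and $m_\lz(x,y)\le A_\lz(\eta^*)=\wz A_\lz(\eta)$; infimizing gives $m_\lz(x,y)\le \wz m_\lz(x,y)$.

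Combining the two inequalities we obtain the chain $m_\lz(x,y)=\wz m_\lz(x,y)=\wz A_\lz(\gz)=A_\lz(\gz)$. The equality $A_\lz(\gz)=\wz A_\lz(\gz)$ forces the AM--GM inequality to be saturated along $\gz$ for a.e.\ $s\in[0,\sz]$, which is exactly $|\dot\gz(s)|^2=2(F\circ\gz(s)+\lz)$ a.e., the desired energy-constant identity.

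The main obstacle is making the reparametrization rigorous when $F\circ\eta=+\fz$ on a set of positive measure, or $|\dot\eta|$ vanishes on another, or both occur simultaneously; care is needed because $F$ is merely lower semicontinuous and can blow up on $\Sigma$. The convention $0\cdot\fz=0$ handles $\{|\dot\eta|=0,\,F\circ\eta=+\fz\}$; on $\{|\dot\eta|>0,\,F\circ\eta=+\fz\}$ we have $\wz A_\lz(\eta)=+\fz$, so there is nothing to prove; and on $\{|\dot\eta|=0\}$ both $t(s)$ and $\eta(s)$ are locally constant, so those intervals can be collapsed without moving the endpoints. Measurability of $s(\cdot)$ follows from monotonicity, and the $L^2$-bound on $\dot\eta^*$ derived above guarantees $\eta^*$ lies in the correct admissibility class, closing the argument.
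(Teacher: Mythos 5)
Your overall strategy coincides with the paper's: first the pointwise AM--GM (Cauchy--Schwarz) inequality $\frac12|v|^2+F+\lz\ge|v|\sqrt{2(F+\lz)}$ gives $A_\lz\ge\wz A_\lz$ and hence $m_\lz\ge\wz m_\lz$; then a Maupertuis-type reparametrization supplies the reverse inequality; and the energy identity drops out of the saturated AM--GM along a minimizer. Your presentation via a single chain of (in)equalities $m_\lz(x,y)=A_\lz(\gz)\ge\wz A_\lz(\gz)\ge\wz m_\lz(x,y)\ge m_\lz(x,y)$ is slightly tidier than the paper's two separate contradiction arguments, but it is the same mechanism.

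The genuine gap is in the reparametrization step. Your map $t(s)=\int_0^s|\dot\eta|/\sqrt{2(F\circ\eta+\lz)}\,dr$ is absolutely continuous and nondecreasing, but it need not be strictly increasing, and even when it is (e.g.\ if $|\dot\eta|=0$ only on a fat Cantor set) one has $t'=0$ on a set of positive measure, so the right-inverse $s(\cdot)$ fails the Luzin~N property and is not absolutely continuous. Consequently ``a direct change-of-variables computation'' that produces $|\dot\eta^*|^2=2(F\circ\eta^*+\lz)$ is not legitimate as stated, and your closing remark that ``the $L^2$-bound on $\dot\eta^*$ derived above guarantees $\eta^*$ lies in the correct admissibility class'' is circular, since that bound was itself obtained from the unjustified change of variables. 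The paper circumvents this exactly by splitting the reparametrization: first collapsing the zero-velocity set $E=\{|\dot\gz|=0\}$ (Lemma~A.5 for minimizers, the unnumbered Lemma~A.7 in general), which makes the subsequent Jacobi arc-length map $\psi$ satisfy $\psi'>0$ a.e.\ and therefore have an absolutely continuous inverse (Lemma~A.8), with each change of variable handled by the area-formula tool recorded in Lemma~A.6. Your one-step version can in fact be repaired---for instance, $\sum_i|\eta^*(d_i)-\eta^*(c_i)|\le\left(\sum_i(d_i-c_i)\right)^{1/2}\wz A_\lz(\eta)^{1/2}$ follows from Cauchy--Schwarz, giving absolute continuity of $\eta^*$ directly---but you would still need a careful change-of-variables argument (not just the formal chain rule) to obtain the pointwise energy identity for $\eta^*$, and none of that is supplied.
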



Consequently, we have the following.
\begin{lemapp}\label{A6}
Under additional assumption $F\in C^{2}(\Omega)$,  for any $\lz>0$,
if
  $\gz\in \mathcal {AC}(x,y;[0,\sz], \rr^{dN})$ is   an
 $m_\lz$-geodesic    with canonical parameter joining $x,y$  and also is   interiorly collision-free (that is,
 $\gz|_{(0,\sz)}\subset\Omega$),  then  $\gz$ is a
 solution to $\ddot x=\nabla F$ in $(0,\sz)$  starting from $x$ and ending at $y$.

Consequently,  if
$\gz:[0,\fz)\to\rr^{dN}$ is an $m_\lz$-geodesic ray   with canonical parameter starting from $x $  and also is interiorly collision-free (that is,
 $\gz|_{(0,\fz)}\subset\Omega$), then    $\gz$ is a
 solution to $\ddot x=\nabla F$ in $(0,\fz)$  starting from $x$.

%

\end{lemapp}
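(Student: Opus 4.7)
The plan is to localize the problem to a compact subinterval of $(0,\sz)$ on which $\gz$ stays inside the open set $\Omega$ where $F$ is $C^2$, and then appeal to the classical Tonelli regularity theorem for smooth convex Lagrangians. First, I would fix an arbitrary closed subinterval $[a,b]\subset(0,\sz)$. By the hypothesis $\gz|_{(0,\sz)}\subset\Omega$ and the continuity of $\gz$, the image $K:=\gz([a,b])$ is a compact subset of the open set $\Omega$. Choose an open neighborhood $V$ with $K\subset V\Subset\Omega$ on which $F$ and $\nabla F$ are $C^2$ and bounded.

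Next, I would argue that $\gz|_{[a,b]}$ is itself an $m_\lz$-geodesic with canonical parameter joining $\gz(a)$ and $\gz(b)$. Indeed, given any competitor $\eta\in\bigcup_{\tau>0}\mathcal{AC}(\gz(a),\gz(b);[0,\tau];\wz\Omega)$, the concatenation $\gz|_{[0,a]}\ast\eta\ast\gz|_{[b,\sz]}$ lies in $\mathcal{AC}(x,y;[0,\sz-(b-a)+\tau];\wz\Omega)$ and has action
\begin{equation*}
A_\lz(\gz|_{[0,a]})+A_\lz(\eta)+A_\lz(\gz|_{[b,\sz]}).
\end{equation*}
Since $\gz$ is a minimizer for the endpoints $x,y$, comparing with this concatenation forces $A_\lz(\gz|_{[a,b]})\le A_\lz(\eta)$, so $\gz|_{[a,b]}$ indeed minimizes $A_\lz$ in the class of absolutely continuous curves joining $\gz(a)$ to $\gz(b)$ (and in particular, among curves staying in $V$).

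Now I would invoke the classical theory of the calculus of variations for the Lagrangian $L(x,v):=\frac12|v|^2+F(x)+\lz$, which is $C^2$ and strictly convex in $v$ on $V\times\rr^{dN}$, together with the uniform superlinearity in $v$. By Tonelli's regularity theorem (Mather's version, see e.g.\ Fathi's weak KAM book), any absolutely continuous minimizer of such a Lagrangian whose image lies in a chart where $L$ is $C^2$ is automatically $C^2$ and satisfies the Euler--Lagrange equation
\begin{equation*}
\frac{d}{ds}\partial_v L(\gz,\dot\gz)=\partial_x L(\gz,\dot\gz),
\end{equation*}
which for our $L$ reduces to $\ddot\gz=\nabla F(\gz)$ on $(a,b)$. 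Since $[a,b]\subset(0,\sz)$ was arbitrary, this gives $\gz\in C^2((0,\sz);\Omega)$ with $\ddot\gz=\nabla F(\gz)$ on all of $(0,\sz)$, as required. For the ray statement, one applies exactly the same argument to each restriction $\gz|_{[0,\tau]}$ on intervals $[a,b]\subset(0,\tau)$ with $\tau>0$ arbitrary, noting that by the definition of an $m_\lz$-geodesic ray the restriction is an $m_\lz$-geodesic with canonical parameter; this gives a solution on all of $(0,\fz)$. Energy constancy $|\dot\gz|^2=2(F\circ\gz+\lz)$, already recorded in Lemma~A.3, is then automatic from the conservation of the Hamiltonian along any $C^2$ solution of $\ddot\gz=\nabla F(\gz)$.

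The only potential obstacle is the appeal to Tonelli regularity: one must verify that the local minimality together with $L\in C^2$, strict convexity in $v$, and superlinearity in $v$ are enough to conclude $C^2$ regularity of $\gz$ and the validity of the pointwise Euler--Lagrange equation. This is standard but not completely trivial; the core idea is that the du Bois-Reymond lemma together with the implicit function theorem applied to $v\mapsto\partial_v L(x,v)$ promotes the weak E--L identity (which follows from variations compactly supported in $(a,b)$ and lying in $V$, hence admissible by the concatenation argument above) to the classical one. No serious obstruction is expected beyond carefully citing the correct regularity statement.
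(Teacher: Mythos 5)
Your proposal is correct and follows essentially the same route as the paper. The paper's proof also (i) reduces to a subinterval so that the image is compactly contained in $\Omega$, (ii) uses the minimality of $\gz$ against $C^1$ variations compactly supported in the open interval, (iii) cites Fathi, Chapter 3, for the $C^2$-regularity of minimizers of a Tonelli Lagrangian, and (iv) concludes the Euler--Lagrange equation $\ddot\gz=\nabla F(\gz)$. The only cosmetic difference is that the paper writes out the vanishing of the first variation $\frac{d}{d\ez}\big|_{\ez=0}A_\lz(\gz+\ez\xi)=0$ explicitly before invoking regularity, whereas you invoke the packaged Tonelli/Mather regularity theorem directly; your concatenation argument for the local minimality of $\gz|_{[a,b]}$ is exactly what Lemma~A.2 of the paper provides. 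Your flagged concern about carefully citing the regularity statement is the same step the paper handles by referencing Fathi.
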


Below we prove Lemma A.2-Lemma A.4.
\begin{proof}[Proof of Lemma \ref{ext}]
Let $x, y \in \wz \Omega $ be two given configurations, with
$x \ne y$. Since  $0<m_\lz(x,y)<\fz$, there exist  a sequence of curves $\{ \dz_n \in\mathcal {AC}(x,y;[0,\sigma_n];\rr^{dN})\}_{n\in\nn}$ such that
$$   \lim_{n \to \fz} A_\lz(\dz_n) = m_\lz(x,y).$$
Thus for $n$ large enough we have $A_\lz(\dz_n) \le m_\lz(x,y)+1$.
Without loss of generality, we may assume that for all $n$, this holds.
This also implies that $\dz_n \in\mathcal {AC}(x,y;[0,\sigma_n];\wz \Omega)$; otherwise $\dz_n(t)\notin \wz\Omega$ for some $0<t<\sigma_n$ and hence
$$m_\lz(x,\dz_n(t))<A_\lz(\dz_n|_{[0,t]})<A_\lz(\dz_n )<\fz,$$
which is a contradiction with the definition of $\wz\Omega$.

 Next, for every $n$, since $$\frac1{2\sigma_n}  |x-y| ^2  +\lz \sigma_n\le  A_\lz(\dz_n) \le 1+m_\lz(x,y), $$   we know
$$0< \liminf_{n\to\fz} \sz_n \le \limsup_{n \to \fz}\sz_n < +\fz. $$
and therefore up to considering subsequence, we
 may assume that $\sz_n \to \sz_0$ as $n \to \fz$.

Now,  for each $n > 0$, we  parameterize $\dz_n$ linearly as below.
Define  $\gz^{(n)} (t) = \dz_n(\sz_n\sz_0^{-1} t)$ for $t\in[0,\sigma_0]$.
A direct calculation leads to
$$  \lim_{n \to\fz} A_\lz(\gz^{(n)} ) = \lim_{n \to\fz} A_\lz(\dz_n) = m_\lz(x,y).$$
We may also assume that
$$A_\lz(\gz^{(n)} )\le m_\lz(x,y)+1, \quad\forall n\ge1.$$

It is easy to see that $\{\gz^{(n)} \}$ is uniform bounded and equicontinuous. Indeed,
 for any $s,t\in[0,\sz_0]$ with $s<t$, one has
$$|\gz^{(n)} (s)-\gz^{(n)} (t)|\le \int_s^t|\dot\gz^{(n)} (\tau)|\,d\tau\le (\int_0^{\sz_0}|\dot\gz^{(n)} (\tau)|^2\,d\tau)^{1/2}|s-t|^{1/2}\le [2(m_\lz(x,y)+1)]^{1/2}|s-t|^{1/2}.$$
 In particular,
$$|\gz^{(n)} (0)-\gz^{(n)} (t)|\le   [2(m_\lz(x,y)+1)]^{1/2}| t|^{1/2}\le [2(m_\lz(x,y)+1)]^{1/2} |\sz_0|^{1/2}.$$

By Arzela-Ascoli Theorem, up to some subsequence, we may assume
that the sequence $\{\gz^{(n)} \}$ converges uniformly to a curve $\gz \in  C^0( [0,\sz_0];\rr^{dN})$ with $\gz(0)=x,\gz(\sz_0)=y$.
{\color{black}  Note that $\gz$ is absolutely continuous.  Indeed,  for any $\ez>0$ and for any family $\{[s_i,t_i]\}_{1\le i\le  k}$ of mutually disjoint intervals such that $\sum_{i=1}^k|s_i-t_i| <\ez$, applying
the Cauchy-Schwarz    inequality,
 we have
 \begin{align*}
   \sum_{i=1}^k|\gz^{(n)} (s_i)-\gz^{(n)} (t_i)| & \le \int_{\cup_{1\le i\le  k} [s_i,t_i]}|\dot\gz^{(n)} (\tau)|\,d\tau\\
    & \le \left[\int_{\cup_{1\le i\le  k} [s_i,t_i]} |\dot\gz^{(n)} (\tau)|^2\,d\tau\right]^{1/2} \left[\sum_{i=1}^k|s_i-t_i|\right]^{1/2} \\
    & \le 2^{1/2} [m_\lz(x,y)+1]^{1/2}\left[\sum_{i=1}^k|s_i-t_i|\right]^{1/2}  \\
    & <2^{1/2} [m_\lz(x,y)+1]^{1/2}\ez^{1/2}.
 \end{align*}}
Thus $\gz \in \mathcal{AC} (x, y;[0,\sz_0];\rr^{dN})$.
Apply Tonelli's Theorem for convex Lagrangians to get
$$ \frac12 \int_0^{\sz_0} |\dot \gz(s)|^2 \, ds  \le \frac12 \liminf_{n \to \fz} \int_0^{\sz_0} |\dot \gz^{(n)} (s)|^2 \, ds $$
and Fatou's Lemma to obtain that
$$   \int_0^{\sz_0} F(\gz) \, ds  \le  \liminf_{n \to \fz} \int_0^{\sz_0} F(\gz^{(n)} ) \, ds. $$
Therefore $A_\lz(\gz) \le m_\lz(x, y)$, which is only possible if the equality holds.
Note that  $A_\lz(\gz)<\fz$ also implies that $\gz \in \mathcal{AC} (x, y;[0,\sz_0];\wz\Omega)$.

Next for any $\gz  \in \mathcal {AC}(x, y; [0,\sz];\wz\Omega)$ such that $A_{\lz}(\gz ) = m_\lz(x, y)$ and for any $0\le s <t\le \sz$, we claim that
$$m_\lz(\gz(s),\gz(t))\le A_\lz(\gz|_{[s,t]}).$$
We show it by contradiction. If $$m_\lz(\gz(s),\gz(t))< A_\lz(\gz|_ {[s,t]}) ,$$
we find $\gz_0  \in \mathcal {AC}(\gz(s), \gz(t); [0,\sz_0];\wz\Omega)$ such that $$A_{\lz}(\gz_0 ) = m_\lz(\gz(s), \gz(t))=m_\lz(\gz(s),\gz(t))< A_\lz(\gz|_ {[s,t]}).$$

The {\color{black}concatenation of $\gz|_{[0,s]}$,$\gz_0$ and $\gz|_{[t,\sz]}$} gives a curve $\eta\in \mathcal {AC}(x, y; [0,\sz_0+\sz-
(t-s) ];\wz\Omega)$ such that
\begin{align*}m_\lz(x,y)\le A_\lz(\eta)&= A_\lz(\gz|_{[0,s]})+A_\lz(\gz_0)+A_\lz(\gz|_{[t,\sz]})\\
&< A_\lz(\gz|_{[0,s]})+A_\lz(\gz|_{[s,t]})+A_\lz(\gz|_{[t,\sz]})\\
&=A_\lz(\gz)=m_\lz(x,y),
\end{align*}
which is a contradiction.
\end{proof}
%
%
%

To prove lemma \ref{key00}, we need the following
 auxiliary Lemma \ref{A5} - Lemma \ref{A8}.
\begin{lemapp} \label{A5}
Given any $\lz>0$ and $x,y\in\Omega$, let $  \gz\in \mathcal {AC}(x,y;[0,\sz];\wz\Omega)$ {\color{black} satisfying} $m_\lz(x,y)=A_\lz(\gz )$.  Then
 $F\circ \gz <\fz$  and $|\dot \gz|>0$ almost everywhere.
\end{lemapp}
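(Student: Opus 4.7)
The plan is to treat the two conclusions separately. For $F\circ\gz<\fz$ almost everywhere, I first note that $m_\lz(x,y)<\fz$: since $x,y\in\Omega$, $\Omega$ is path-connected, and $F$ is locally bounded on $\Omega$ by \eqref{F}, any smooth curve joining $x$ to $y$ inside $\Omega$ has finite action. Consequently,
\[\int_0^\sz F\circ\gz(s)\,ds\le A_\lz(\gz)=m_\lz(x,y)<\fz,\]
which forces $F\circ\gz(s)<\fz$ for almost every $s\in[0,\sz]$.

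For the positivity $|\dot\gz|>0$ almost everywhere, I plan to argue by contradiction. Suppose $E:=\{s\in[0,\sz]:\dot\gz(s)=0\}$ has positive Lebesgue measure. The strategy is to remove the ``resting'' part $E$ from the parameter interval via the reparametrization $\phi(s):=|[0,s]\setminus E|$, which is absolutely continuous, non-decreasing, and maps $[0,\sz]$ onto $[0,\sz-|E|]$ with $\phi'=\mathbf{1}_{[0,\sz]\setminus E}$ almost everywhere. Whenever $\phi(s_1)=\phi(s_2)$ with $s_1<s_2$, the set $[s_1,s_2]\setminus E$ is null, so $\dot\gz=0$ almost everywhere on $[s_1,s_2]$ and $\gz$ is constant there. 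This ensures that $\tilde\gz(\phi(s)):=\gz(s)$ unambiguously defines a curve $\tilde\gz:[0,\sz-|E|]\to\wz\Omega$ with the same endpoints $x,y$ as $\gz$. Using the change-of-variable formula for $\phi$ together with $\dot\gz=0$ on $E$ and $F\ge 0$, one then aims to derive
\[A_\lz(\tilde\gz)=\int_{[0,\sz]\setminus E}\left[\tfrac12|\dot\gz|^2+F\circ\gz+\lz\right]ds\le A_\lz(\gz)-\lz|E|<A_\lz(\gz)=m_\lz(x,y),\]
contradicting the definition of $m_\lz(x,y)$.

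The main obstacle will be to verify cleanly that $\tilde\gz$ belongs to $\mathcal{AC}(x,y;[0,\sz-|E|];\wz\Omega)$ and that the change of variables above is valid. I expect both to be routine consequences of standard measure-theoretic properties of AC reparametrizations: introducing the right-inverse $\psi(t):=\inf\{s:\phi(s)=t\}$, one has $\tilde\gz=\gz\circ\psi$, from which $\dot{\tilde\gz}(t)=\dot\gz(\psi(t))$ on a set of full measure in $[0,\sz-|E|]$; absolute continuity of $\tilde\gz$ then follows because $\psi$ is Lipschitz with constant $1$ (being the right-inverse of a $1$-Lipschitz monotone function). Once these technicalities are in place, the action identity and the resulting strict inequality follow directly from $\dot\gz\equiv 0$ on $E$ and $\lz|E|>0$.
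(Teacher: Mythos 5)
The overall strategy is the same as the paper's: both delete the ``resting'' set $E=\{|\dot\gz|=0\}$ via $\phi(s)=|[0,s]\setminus E|$, pass to the reparametrized curve, and derive the strict drop $A_\lz(\tilde\gz)\le A_\lz(\gz)-\lz|E|<m_\lz(x,y)$. The first half (finiteness of $F\circ\gz$ a.e.\ from integrability of $F\circ\gz$) matches the paper exactly. However, there is a genuine error in your justification of absolute continuity of the reparametrized curve. You claim that the right-inverse $\psi(t)=\inf\{s:\phi(s)=t\}$ is ``Lipschitz with constant $1$ (being the right-inverse of a $1$-Lipschitz monotone function).'' This is false, and in the wrong direction: since $\phi$ is $1$-Lipschitz, $t_2-t_1=\phi(\psi(t_2))-\phi(\psi(t_1))\le \psi(t_2)-\psi(t_1)$, so $\psi$ \emph{expands} distances. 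Moreover, because $\phi'=\chi_{[0,\sz]\setminus E}$ vanishes on a set of positive measure, $\psi$ will typically jump across any constancy interval of $\phi$, so it need not even be continuous. So ``$\psi$ Lipschitz $\Rightarrow$ $\tilde\gz=\gz\circ\psi$ is AC'' does not hold as stated, and the chain-rule identity $\dot{\tilde\gz}(t)=\dot\gz(\psi(t))$ a.e.\ does not follow from your stated reason.

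The paper closes this gap differently. It first shows $E$ cannot contain an interval of positive length (otherwise one cuts out that interval, on which $\gz$ is constant, and strictly lowers the action already), so $\phi$ is strictly increasing and $\phi^{-1}$ is a genuine function. It then \emph{does not} use any Lipschitz property of $\phi^{-1}$; rather, it writes $\eta(t)-\eta(0)=\int_0^{\phi^{-1}(t)}\dot\gz\,ds=\int_0^\sz (\dot\gz\,\chi_{[0,\phi^{-1}(t)]}\chi_{[0,\sz]\setminus E})\,ds$, and applies the area/change-of-variable formula for the \emph{Lipschitz} map $\phi$ (Lemma~A.6(i)) to obtain $\eta(t)-\eta(0)=\int_0^t \dot\gz\circ\phi^{-1}\,ds$ with $\dot\gz\circ\phi^{-1}\in L^1$, from which absolute continuity of $\eta$ is read off directly. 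If you want to keep your formulation with the right-inverse $\psi$, the fix is the same in spirit: show $\tilde\gz(t)-\tilde\gz(0)=\int_0^t\dot\gz\circ\psi\,du$ by substituting $u=\phi(s)$ (change of variable for the $1$-Lipschitz, monotone map $\phi$) and using $\dot\gz\equiv 0$ on $E$; one also needs to observe that $\psi\circ\phi=\mathrm{id}$ a.e.\ off $E$, which requires a short separate argument about the intervals of constancy of $\phi$. These steps are precisely the technical content of the paper's proof; they cannot be replaced by a Lipschitz bound on $\psi$, which is simply not available.
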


{\color{black}
To prove Lemma \ref{A5} and for later use,
we recall the following two change of variable formulas.
We refer to for example \cite[Section 3.3.3, Theorem 2]{eg92}
 for the first one, which  comes from the area formula and works for Lipschitz maps;
and refer to \cite[Proposition 2.2.18]{b97} for  the second one, which  works for absolute continuous maps.

\begin{lemapp}\label{changeofvariable}
(i) Let $f:\rr\to\rr$ be Lipschitz. For any $g\in L^1(\rr)$, one has
$$\left|\sum_{s\in f^{-1}(t)} g (s)\right|\le \sum_{s\in f^{-1}(t)}|g|(s)\in L^1(\rr),$$
where $f^{-1}(t)$ is at most countable  for almost all $t\in\rr$, and
$$\int_\rr g(s)|f'(s)|\,ds=\int_\rr \left[\sum_{s\in f^{-1}(t)}g(s)\right]\,dt. $$
If $f$ is injective in addition, then     $g\circ f^{-1}\in L^1(\rr)$ and
$$\int_\rr g(s)|f'(s)|\,ds=\int_\rr  g\circ f^{-1}(t) \,dt. $$

(ii) Let  $ f:[\alpha,\bz]\to[a,b]$ be  absolutely continuous and increasing,  and satisfy
$a=f(\alpha)$ and $b=f(\bz)$. Then for any
 $g \in L^1([a,b])$,
one has $ (g\circ f)f'\in L^1([\alpha,\bz])$ and
$$\int_a^b g (t)\,dt=\int_{\az}^{\bz}g (f(s))f'(s) \,ds.$$
\end{lemapp}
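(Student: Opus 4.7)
The plan is to treat (i) and (ii) as the one-dimensional area formula in the Lipschitz and absolutely continuous settings, respectively; both are classical results in geometric measure theory, so the sketch will be brief.

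For (i), the first step I would take is to establish the Banach indicatrix identity $\int_A |f'(s)|\,ds=\int_\rr \#(A\cap f^{-1}(t))\,dt$ for every measurable $A\subset\rr$. The three ingredients I would invoke are: Rademacher's theorem (so $f'$ exists almost everywhere), the Luzin N-property of Lipschitz maps (the image of any null set is null), and the one-dimensional Sard lemma $|f(\{f'=0\})|=0$, which follows from a direct Vitali covering argument. Combining the N-property and Sard, for almost every $t$ the fiber $f^{-1}(t)$ lies in $\{f'\neq 0\}$; on that set $f$ is locally strictly monotone, so the fiber is locally finite and hence at most countable, which justifies forming the sum. The full identity is then obtained by first verifying it for $g=\chi_E$ (which is exactly the Banach indicatrix identity applied to $E$), extending by linearity to simple functions, by monotone convergence to nonnegative measurable $g$, and by the decomposition $g=g^+-g^-$ to general $g\in L^1$. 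Injectivity of $f$ collapses the inner sum to $g\circ f^{-1}(t)$ (where the preimage exists), yielding the last assertion.

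For (ii), the central input is that an absolutely continuous increasing $f:[\alpha,\beta]\to[a,b]$ is continuous and surjective and satisfies the refined Luzin identity $|f(A)|=\int_A f'\,ds$ for every measurable $A\subset[\alpha,\beta]$. For $g=\chi_E$ with $E\subset[a,b]$ measurable, this produces
\[
\int_\alpha^\beta \chi_E(f(s))f'(s)\,ds=\int_{f^{-1}(E)}f'\,ds=|f(f^{-1}(E))|=|E|=\int_a^b\chi_E(t)\,dt.
\]
Linearity, monotone convergence, and the $g=g^+-g^-$ split then upgrade this to arbitrary $g\in L^1([a,b])$; running the same chain with $|g|$ in place of $g$ gives $(g\circ f)f'\in L^1([\alpha,\beta])$ as a byproduct.

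The most delicate ingredient, in my view, is the fiber-countability assertion in (i), which hinges on the one-dimensional Sard lemma $|f(\{f'=0\})|=0$. Although the proof of this Sard lemma is short, it is the only step that is not pure measure-theoretic bookkeeping; everything else reduces to approximating $g$ by simple functions and standard manipulations of the Lebesgue integral.
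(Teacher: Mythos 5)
The paper does not prove this lemma: it states it as a recall and cites Evans--Gariepy \cite[Section 3.3.3, Theorem 2]{eg92} for part (i) (as a special case of the area formula for Lipschitz maps) and Bridges \cite[Proposition 2.2.18]{b97} for part (ii). Your sketch fills in a proof the paper chose to outsource, and the overall architecture --- Banach indicatrix identity first, then simple functions, monotone convergence, and the $g=g^+-g^-$ split; Luzin-type identity $|f(A)|=\int_A f'$ for the absolutely continuous increasing case --- is the standard and correct route.

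There is, however, one incorrect step in your justification of fiber countability in (i). You assert that on $\{f'\neq 0\}$ the map $f$ is ``locally strictly monotone, so the fiber is locally finite.'' This is false: a nonzero derivative at a single point does not force local monotonicity for a Lipschitz function (e.g.\ $f(x)=x+cx^2\sin(1/x)$, $f(0)=0$, has $f'(0)=1$ yet is not monotone on any neighborhood of $0$). The correct elementary argument is different: if $s_0$ is an accumulation point of $f^{-1}(t)$ at which $f'(s_0)$ exists, then the difference quotients along a sequence $s_n\to s_0$ with $f(s_n)=t$ all vanish, forcing $f'(s_0)=0$. Hence once you know (via Rademacher, Luzin N, and the 1D Sard lemma) that for a.e.\ $t$ the fiber $f^{-1}(t)$ is contained in $\{f'\text{ exists and }f'\neq 0\}$, such a fiber has no accumulation points in itself, i.e.\ it is discrete in $\rr$ and therefore countable. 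Alternatively, countability is an automatic byproduct of the Banach indicatrix identity itself on compact intervals (the integral on the left is finite, so the indicatrix is finite a.e.), which you are establishing anyway; you do not need to prove countability as a separate preliminary step. With that correction, your sketch is sound and consistent with what the cited references provide.
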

}

\begin{proof}[Proof of Lemma \ref{A5}]\rm
Note that $A_\lz(\gz)<\fz$ implies the integrability of $F\circ \gz$ in $[0,\sz]$,
and hence    $F\circ \gz(s)<\fz$ for almost all $s\in[0,\sz]$.
Moreover, denote by $E$ {\color{black} the set of $t\in [0,\sz]$ such that $|\dot \gz(t)|=0$}.  We prove $|E|=0$ by contradiction.  Assuming that $|E|>0$ below.

First, we show that {\color{black} there is no interval $[s_0, s_1] \subset  [0, \sz]$ such that
almost all points in $[s_0, s_1]$ are in $E$.}
Otherwise, assume that  $ |\dot\gz |=0  $ in the interval $[s_0,s_1]\subset[0,\sz]$
with $s_0<s_1$.
Then $ \gz(s)= \gz(s_0)$ for $s\in [s_0,s_1]$.
Let $$\mbox{$\eta(s)=\gz(s)$ if $0\le s\le s_0$,
and $\eta(s)=\gz(s+(s_1-s_0))$ if $\sz-(s_1-s_0) \ge s\ge s_0$.}$$
Obviously,   $\eta\in \mathcal {AC}(x,y;[0,{\color{black} \sz}-(s_1-s_0)];\wz\Omega)$ and
$A(\eta)<A(\gz)$, {\color{black} which is a contradiction}.

 Next define
$$\phi(s):=\int^s_0\chi_{[0,\sz]\setminus E}(\dz)\,d\dz.$$
{\color{black} It is obvious that $\phi$ is Lipschitz, and hence absolutely continuous,
  $\phi'(s)=\chi_{[0,\sz]\setminus E}(s)$  for almost all $s\in[0,{\color{black} \sz}]$. Moreover, $|\phi(E)|=0$  and $\phi ([0,\sz])=[0,\sz-|E|]$.
 Since $E$ does not contain any interval, we
 know that $\phi$ is strictly increasing in  $[0,\sz]$, and hence injective.
 Thus the inverse $\phi ^{-1}:[0,\sz-|E|]\to [0,\sz]$ is well-defined.

{\color{black} 
For any $g\in L^1([0,\sz])$,  we claim that
  \begin{align}\label{e.xx1}  \mbox{  $ g \circ \phi^{-1} \in  L^1 ([0,\sz-|E|]) $ and  }
  \int_0^\sz  g\chi_{[0,\sz]\setminus E} \,ds =
  \int_0^{\sz-|E|}g \circ \phi^{-1}\,ds
\end{align}
Indeed,  let $\wz\phi(t)=\int_0^t\chi_{E^\complement} $ for $t\in\rr$.
Then $\wz \phi$ is {\color{black}Lipschitz}  and  strictly increasing in whole $\rr$,
 $\wz \phi|_{[0,\sz]}=\phi$ and $(\wz \phi)^{-1}|_{[0,\sz-|E|]}=\phi^{-1}$.
Applying
 the   change of variable formula in Lemma \ref{changeofvariable}(i) to $g\chi_{[0,\sz]}$ and   $\wz\phi$,  noting  that $\phi[0,\sz]=[0,\sz-|E|]$ and
 $\phi'=\chi_{[0,\sz]\setminus E}$ almost everywhere,
   one has
  \begin{align*}
  \int_0^\sz  g\chi_{[0,\sz]\setminus E} \,ds=
    \int_\rr  (g\chi_{[0,\sz] }) |(\wz \phi)'|\,ds=
      \int_\rr (g\chi_{[0,\sz] })\circ (\wz \phi)^{-1}\,ds=
  \int_0^{\sz-|E|}g \circ \phi^{-1}\,ds
\end{align*}
as desired.

 Write  $\eta(t)=\gamma(\phi^{-1}(t))$ for $t\in[0,\sz-|E|]$.   Since $ \gz$ is absolutely continuous (hence  $\dot\gz\in L^1([0,\sz])$) and $\dot\gz= 0$ in $E$,  for any $t\in[0,\sz-|E|]$ we have
 \begin{align*}
\eta(t )-\eta(0)&=  \gamma(\phi^{-1}(t))-\gamma(0)=
\int_{0}^{\phi^{-1}(t)}\dot\gz(s)\,ds  
= \int_{0}^{\sz } (\dot\gz \chi_{[0,\phi^{-1}(t)]}\chi_{[0,\sz]\setminus E})(s)   \,ds.
 \end{align*}
  Applying \eqref{e.xx1} to $\dot\gz \chi_{[0,\phi^{-1}(t)]}$ for all $t\in[0,\sz-|E|]$,
  one  has $\dot \gz  \circ \phi^{-1} \in  L^1 ([0,\sz-|E|]) $ and
\begin{align*}
\eta(t )-\eta(0)
= \int_{0}^{\sz -|E|} (\dot\gz \chi_{[0,\phi^{-1}(t)]})\circ\phi^{-1}(s)    \,ds.
 \end{align*}
Thanks to $\chi_{[0,\phi^{-1}(t)]}\circ \phi^{-1}=\chi_{[0, t]} $, we obtain
\begin{align*} \eta(t )-\eta(0) =\int_{0}^{\sz-|E|}\dot\gz   \circ\phi^{-1}(\dz) \chi_{[0,t]}  (\dz)  \,d\dz=\int_0^t  \dot\gz \circ\phi^{-1}(\dz)    \,d\dz.
 \end{align*}
 }}
Thus
$\eta \in \mathcal {AC}(x,y;[0,\sz-|E|],\wz\Omega)$
 and hence differentiable almost everywhere in  $[0,\sz-|E|]$ with
\begin{align}\label{e.xx21} \dot \eta =  \dot\gz  \circ\phi^{-1}   \in L^1([0,\sz-|E|]).
\end{align}

 {\color{black}
By  \eqref{e.xx21}
and $ |\dot \gz|^2\in L^1([0,\sz])$,  applying \eqref{e.xx1} to $|\dot \gz|^2$ and $F\circ \gz \in L^1([0,\sz])$ one has
\begin{align*}
  A_\lz(\eta) & =
\int_0^{\sz-|E|}[ \frac12| \dot\eta(t)|^2+F\circ\eta(t) +\lz]\,dt \\
& =
\int_0^{\sz-|E|}[ \frac12|\dot\gz\circ\phi^{-1}  (t)|^2+(F\circ\gamma)\circ \phi^{-1}   (t) +\lz]\,dt \\
& =
\int_0^{\sz }[ \frac12| \dot\gz  (t)|^2 \chi_{[0,\sz]\setminus E} (t)+(F\circ\gamma)   (t)  \chi_{[0,\sz]\setminus E} (t) ]\,dt +
\lz(\sz-|E|)
\end{align*}
Since $|[0,\sz]\setminus E|=\sz-|E|$,  one has
  \begin{align*}
  A_\lz(\eta) & =
\int_0^{\sz }[ \frac12|\dot \gz (s)|^2+F\circ \gz(s) +\lz]\chi_{[0,\sz]\setminus E}(s)\,ds
<A_\lz(\gz).
\end{align*}
}
  Hence $A_\lz(\eta) < m_\lz(x,y)$, which contradicts to the definition of $m_\lz(x,y)$.
\end{proof}

\begin{lemapp}
Let $\gz\in \mathcal {AC}(x,y;[0,\sz];\rr^{dN})$ with  $\wz A_\lz(\gz)<\fz$.
We can reparameterize $\gz$ to get a new curve
$\xi\in  \mathcal {AC}(x,y;[0,\tau];\rr^{dN})$  so that  $|\dot \xi|>0$ almost everywhere and  $\wz A_\lz (\xi)=\wz A_\lz (\gz) $.
\end{lemapp}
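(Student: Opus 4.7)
The plan is to reparameterize $\gz$ by its Euclidean arc-length. I first set $L(s) := \int_0^s |\dot\gz(u)|\,du$ for $s \in [0,\sz]$, which is non-decreasing and absolutely continuous with $L'(s) = |\dot\gz(s)|$ for a.e.\ $s$. Let $\tau := L(\sz)$; without loss of generality $\tau > 0$ (otherwise $\gz$ is constant, $x = y$ and $\wz A_\lz(\gz) = 0$, so the statement is vacuous). Define the right-inverse $\sigma : [0,\tau] \to [0,\sz]$ by $\sigma(t) := \inf\{s \in [0,\sz] : L(s) \ge t\}$ and set $\xi(t) := \gz(\sigma(t))$ for $t \in [0,\tau]$.

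For each $t \in [0,\tau]$ the level set $L^{-1}(t)$ is a closed interval of $[0,\sz]$ on which $L$ is constant, so $|\dot\gz| = 0$ a.e.\ there; by absolute continuity of $\gz$, the map $\gz$ is in fact constant on $L^{-1}(t)$, so $\xi(t)$ depends only on $t$. For $0 \le t_1 < t_2 \le \tau$,
\[
|\xi(t_2) - \xi(t_1)| \le \int_{\sigma(t_1)}^{\sigma(t_2)} |\dot\gz(u)|\,du = L(\sigma(t_2)) - L(\sigma(t_1)) = t_2 - t_1,
\]
so $\xi$ is $1$-Lipschitz and hence absolutely continuous with $|\dot\xi| \le 1$ a.e. A matching partition-by-partition argument (any partition of $[t_1,t_2]$ pushes through $\sigma$ to give a partition of $[\sigma(t_1),\sigma(t_2)]$, and vice versa, with corresponding sums equal) shows the Euclidean length of $\xi|_{[t_1,t_2]}$ equals that of $\gz|_{[\sigma(t_1),\sigma(t_2)]}$, namely $t_2-t_1$; applied to arbitrary sub-intervals this forces $|\dot\xi(t)| = 1$ for a.e.\ $t$. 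In particular $\xi \in \mathcal{AC}(x,y;[0,\tau];\rr^{dN})$ and $|\dot\xi| > 0$ a.e.

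It remains to verify $\wz A_\lz(\xi) = \wz A_\lz(\gz)$. Since $|\dot\xi| = 1$ a.e., $\wz A_\lz(\xi) = \int_0^\tau \sqrt{2F\circ\xi(t) + 2\lz}\,dt$. The plateaus of $L$ form a pairwise disjoint family of non-degenerate intervals in $[0,\sz]$, hence a countable collection, so $L^{-1}(t) = \{\sigma(t)\}$ for a.e.\ $t \in [0,\tau]$. Set $g(s) := \sqrt{2F\circ\gz(s) + 2\lz}$ and $g_n(s) := \min\{g(s),n\}\chi_{[0,\sz]}(s) \in L^\infty(\rr) \cap L^1(\rr)$. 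Applying the area formula of Lemma~\ref{changeofvariable}(i) with $f$ a Lipschitz extension of $L$ to $\rr$ yields
\[
\int_0^\sz g_n(s)\,|\dot\gz(s)|\,ds = \int_0^\tau g_n(\sigma(t))\,dt,
\]
and letting $n \to \fz$ via monotone convergence on both sides gives $\wz A_\lz(\gz) = \int_0^\tau \sqrt{2F\circ\xi(t)+2\lz}\,dt = \wz A_\lz(\xi)$. The main technical obstacle is exactly this change-of-variables step: Lemma~\ref{changeofvariable}(i) requires $g \in L^1(\rr)$, whereas a priori we know only that $g\,|\dot\gz| \in L^1([0,\sz])$ (this being the hypothesis $\wz A_\lz(\gz) < \fz$), and $g$ itself may blow up on the zero-speed set $\{|\dot\gz|=0\}$; the truncation-plus-monotone-convergence scheme above circumvents this cleanly, completing the proof.
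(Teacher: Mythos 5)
Your overall strategy---reparametrize to eliminate the zero-speed set---is sound, and your arc-length normalization (forcing $|\dot\xi|\equiv 1$) is a perfectly reasonable variant of what the paper does. The paper's proof is ``much similar to that of Lemma~\ref{A5},'' whose construction uses $\phi(s)=\int_0^s\chi_{E^\complement}$ with $E=\{|\dot\gz|=0\}$; that map satisfies $0\le\phi'\le 1$ and is therefore \emph{Lipschitz}, which is precisely what lets the authors invoke Lemma~\ref{changeofvariable}(i). The resulting $\xi$ keeps the original speed on $E^\complement$ rather than normalizing it, but either choice yields $|\dot\xi|>0$ a.e.

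The genuine gap in your argument is the claim ``with $f$ a Lipschitz extension of $L$ to $\rr$.'' Your $L(s)=\int_0^s|\dot\gz|$ has $L'=|\dot\gz|$ only in $L^1([0,\sz])$, since $\gz$ is merely absolutely continuous, not Lipschitz. So $L$ itself need not be Lipschitz, and a Lipschitz extension to $\rr$ does not exist; Lemma~\ref{changeofvariable}(i) is therefore not applicable as stated, independently of how you truncate $g$. (Truncation fixes the integrability of $g$, not the regularity of $L$.) This is the exact pitfall the paper's $\phi$ is designed to sidestep.

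The remedy is cheap: replace Lemma~\ref{changeofvariable}(i) with Lemma~\ref{changeofvariable}(ii), which is stated for absolutely continuous increasing maps and applies directly to $f=L:[0,\sz]\to[0,\tau]$. For the truncation $g_n=\min\{\sqrt{2F\circ\xi+2\lz},\,n\}\in L^1([0,\tau])$, Lemma~\ref{changeofvariable}(ii) gives
\[
\int_0^\tau g_n(t)\,dt \;=\; \int_0^\sz g_n(L(s))\,L'(s)\,ds \;=\; \int_0^\sz \min\bigl\{\sqrt{2F\circ\gz(s)+2\lz},\,n\bigr\}\,|\dot\gz(s)|\,ds,
\]
using that $\gz\circ\sigma\circ L=\gz$ (since $\gz$ is constant on each plateau $L^{-1}(L(s))$ by the argument you already gave). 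Monotone convergence on both sides then yields $\wz A_\lz(\xi)=\wz A_\lz(\gz)$, and the rest of your proof stands. With this one substitution your proposal becomes a correct, and arguably cleaner, proof that is genuinely different from the paper's route via $\phi(s)=\int_0^s\chi_{E^\complement}$.
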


\begin{proof}
The proof is much similar to that of {\color{black}Lemma \ref{A5}}.
\end{proof}

\begin{lemapp} \label{A8}
Let $\gz\in \mathcal {AC}(x,y;[0,\sz]; \rr^{dN})$ with $\wz A_\lz(\gz)<\fz$, and
 $|\dot \gz|>0$ almost everywhere. Then $\gz\in \mathcal {AC}(x,y;[0,\sz]; \wz\Omega)$, and
we can reparameterize $\gz$ to get a new curve
$\eta\in  \mathcal {AC}(x,y;[0,\tau];\wz\Omega)$    such that
$$|\dot \eta(t)|=\sqrt{2(F\circ \eta(t) +\lz)}\quad \mbox{almost everywhere.}$$
 Moreover,
$$A_\lz (\eta)=\wz A_\lz (\eta)=\wz A_\lz (\gz)\le A_\lz(\gz).
$$

\end{lemapp}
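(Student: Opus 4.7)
The plan is to exploit the elementary identity
\begin{equation*}
\frac12\bigl(|\dot\gz(s)|-\sqrt{2(F\circ\gz(s)+\lz)}\bigr)^2 = \frac12|\dot\gz(s)|^2 + F\circ\gz(s) + \lz - |\dot\gz(s)|\sqrt{2(F\circ\gz(s)+\lz)}\ge 0,
\end{equation*}
which, upon integrating over $[0,\sz]$, immediately yields $\wz A_\lz(\gz)\le A_\lz(\gz)$, with pointwise equality if and only if $|\dot\gz(s)|=\sqrt{2(F\circ\gz(s)+\lz)}$ a.e. The lemma will then follow once I construct a reparametrization $\eta$ of $\gz$ that realizes this pointwise equality; the remaining statements (equalities of the three actions and $\gz\subset\wz\Omega$) will drop out as corollaries.

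To build $\eta$, I would set
\begin{equation*}
\phi(s):=\int_0^s \frac{|\dot\gz(u)|}{\sqrt{2(F\circ\gz(u)+\lz)}}\, du,\qquad s\in[0,\sz].
\end{equation*}
The hypothesis $\wz A_\lz(\gz)<\fz$ together with $|\dot\gz|>0$ a.e.\ forces $F\circ\gz<\fz$ a.e., so the integrand is finite a.e.; the crude bound $\phi'(s)\le |\dot\gz(s)|/\sqrt{2\lz}$ gives $\tau:=\phi(\sz)\le l(\gz)/\sqrt{2\lz}<\fz$, so $\phi$ is absolutely continuous. Because $\phi'(s)>0$ a.e., $\phi$ is strictly increasing, hence $\phi:[0,\sz]\to[0,\tau]$ is a continuous bijection with continuous inverse $\psi:=\phi^{-1}$.

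Next I would define $\eta(t):=\gz(\psi(t))$ for $t\in[0,\tau]$ and verify, via the change-of-variable formula in Lemma~A.6, that $\eta$ is absolutely continuous with $|\dot\eta(t)|=|\dot\gz(\psi(t))|/\phi'(\psi(t))=\sqrt{2(F\circ\eta(t)+\lz)}$ for a.e.\ $t$. Applying Lemma~A.6(ii) with the substitution $t=\phi(s)$ yields
\begin{equation*}
A_\lz(\eta)=\int_0^\tau 2\bigl(F\circ\eta(t)+\lz\bigr)\, dt = \int_0^\sz|\dot\gz(s)|\sqrt{2(F\circ\gz(s)+\lz)}\, ds = \wz A_\lz(\gz),
\end{equation*}
where the first equality uses $|\dot\eta|^2=2(F\circ\eta+\lz)$, while $\wz A_\lz(\eta)=\wz A_\lz(\gz)$ follows independently by the reparametrization invariance of the line integral. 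Combining these with the AM-GM inequality gives $A_\lz(\eta)=\wz A_\lz(\eta)=\wz A_\lz(\gz)\le A_\lz(\gz)$ as required.

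Finally, to conclude that both $\gz$ and $\eta$ take values in $\wz\Omega$, I would argue that $A_\lz(\eta|_{[0,t]})<\fz$ for every $t\in[0,\tau]$ and hence $m_\lz(x,\eta(t))<\fz$; combined with the (contextually implicit) assumption $x\in\wz\Omega$, this forces $\eta(t)\in\wz\Omega$, and since $\gz(s)=\eta(\phi(s))$ and $\phi$ is onto $[0,\tau]$, also $\gz(s)\in\wz\Omega$ for every $s$. The main obstacle I foresee is the rigorous verification of the change-of-variable for $\phi$ and its inverse on the sets where $|\dot\gz|>0$ and $F\circ\gz<\fz$ simultaneously hold (each of full measure in $[0,\sz]$); this requires a careful invocation of Lemma~A.6 so as not to lose mass on the exceptional null sets, but once done it is purely mechanical.
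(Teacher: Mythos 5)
Your proposal follows the same route as the paper: both define the reparametrizing function $\phi(s)=\int_0^s |\dot\gz|/\sqrt{2(F\circ\gz+\lz)}\,du$ (called $\psi$ in the paper), observe that $\wz A_\lz(\gz)<\fz$ and $|\dot\gz|>0$ a.e.\ force $F\circ\gz<\fz$ a.e.\ so that $\phi'>0$ a.e., set $\eta=\gz\circ\phi^{-1}$, and compute $|\dot\eta|=\sqrt{2(F\circ\eta+\lz)}$ a.e.\ via the change-of-variable formula, then derive the chain of action identities from the AM--GM identity you wrote out. So the skeleton is identical.

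The gap is that you do not establish the absolute continuity of $\phi^{-1}$, and you should not wave this away as ``purely mechanical.'' A strictly increasing continuous bijection of an interval certainly has a continuous inverse, but that inverse need not be absolutely continuous --- it is absolutely continuous precisely when $\phi$ satisfies Luzin's condition (N) in the inverse direction, i.e.\ $\phi^{-1}$ maps null sets to null sets. Without this, you cannot invoke Lemma~A.6(ii) (which explicitly requires the substitution to be absolutely continuous), nor conclude that $\eta=\gz\circ\phi^{-1}$ is absolutely continuous, nor differentiate the identity $t=\phi(\phi^{-1}(t))$ to get the chain-rule expression for $\dot\eta$. The paper proves this by noting that a null $E\subset[0,\phi(\sz)]$ is contained in a null $G_\dz$ set $H$, that $\phi^{-1}(H)$ is again $G_\dz$ and measurable, and that $0=|H|=\int_{\phi^{-1}(H)}\phi'(s)\,ds$ together with $\phi'>0$ a.e.\ forces $|\phi^{-1}(H)|=0$. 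This step is exactly where the hypothesis $|\dot\gz|>0$ a.e.\ (hence $\phi'>0$ a.e.) enters in an essential, non-mechanical way, and it is the technical heart of the lemma; your write-up needs to supply it.

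One further small point: your concluding paragraph deduces $\gz\subset\wz\Omega$ from an ``implicit'' assumption $x\in\wz\Omega$ plus $A_\lz(\eta|_{[0,t]})<\fz$. This is logically fine once the action equality is in hand (so the circularity you might worry about is resolved by proving the equalities before extracting the containment), but it is worth stating explicitly which prior lemma or convention supplies $x\in\wz\Omega$, since Lemma~A.4's statement only posits $\gz\in\mathcal{AC}(x,y;[0,\sz];\rr^{dN})$.
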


\begin{proof}
Write
$$\psi(s)=\int_0^s\frac{|\dot\gz(\dz)|}{\sqrt{2 F\circ\gz(\dz)+2\lz}}\,d\dz\quad\forall   s\in[0,\sz].$$
Note that $\psi(0)=0$ and $\psi(\sz)<\fz$.
Obviously, $\psi$ is absolutely continuous.
\begin{equation}\label{e.xx3}\psi'(s)= \frac{|\dot\gz(s)|}{\sqrt{2 F\circ\gz(s)+2\lz}}\quad\mbox{for almost all $s\in[0,\sz]$}.
\end{equation}
Note that {\color{black} $|\dot\gz(s)|\sqrt{2 F\circ\gz(s)+2\lz} \in L^1([0,\sz])$ and $|\dot\gz|>0$ almost everywhere, hence} $F\circ \gz<\fz$ almost everywhere.
Since $|\dot \gz|>0$  almost everywhere, we have  $\psi' >0$ almost everywhere.  Thus    $\psi$ is continuous and strictly increasing.   Therefore $\psi([0,\sz])=[0,\psi(\sz)]$, and
$\psi^{-1}:[0,\psi(\sz)]\to[0,\sz]$ is  also continuous, strictly increasing.

Next we show that  $\psi^{-1}$ is absolutely continuous,
that is, $|\psi^{-1}(E)|=0$  whenever $E \subset[0,\psi(\sz)]$ has measure $|E|=0$.
We only need to prove that  for any set $E\subset[0,\sz]$, if $ \psi(E) $ has measure $0$, then $E$ has measure $0$.
Indeed, $\psi(E)$ must be contained in a $G_\dz$-set $H$ which has measure $0$, and hence $E\subset\psi^{-1}(H)$.
So if
 $\psi^{-1}(H)$ has measure $0$, then $E$ has measure $0$.
Since $H$ is a $G_\dz$-set, we know that $\psi^{-1}(H)$ is also a $G_\dz$-set, one can see that
$$0=|H|=\int_{\psi^{-1}(H)}\psi'(s)\,ds.$$
Since $\psi'>0$ almost everywhere, we have $|\psi^{-1}(H)|=0$ as desired.

{\color{black} Let $\eta(t)=\gz(\psi^{-1}(t))$ for $t\in[0,\psi(\sz)]$.
Thus,  for any $t\in[0,\psi(\sz)]$, one has
$$\eta(t)-\eta(0)=\int_0^{\psi^{-1}(t)}\dot\gz(\dz)\,d\dz=\int_0^\sz (\dot\gz\chi_{[0,\psi^{-1}(t)]}) (\dz)\,d\dz.$$
Applying the  change of variable formula given in Lemma \ref{changeofvariable}(ii) to $\dot\gz\chi_{[0,\psi^{-1}]}\in L^1([0,\sz]) $ and
$\psi^{-1}$,  noting
 $\chi_{[0,\psi^{-1}(t)]}\circ\psi^{-1} =\chi_{[0,t]}$,
one has $\dot\gz \circ \psi^{-1} (\psi^{-1})'\in L^1([0,t])$ and
$$\eta(t)-\eta(0)=\int_0^{\psi(\sz)}(\dot\gz\chi_{[0,\psi^{-1}(t)]})(\psi^{-1}(s))(\psi^{-1})'(s) \,ds=\int_0^t \dot\gz (\psi^{-1}(s))(\psi^{-1})'(s) \,ds.$$
 Thus $\eta \in \mathcal{AC}(x,y;[0,\psi(\sz)];\wz\Omega)$ with
 $\dot\eta =\dot \gz\circ\psi^{-1} (\psi^{-1})' $  almost everywhere.

 Since $t=\psi\circ \psi^{-1}(t)$ for all $t\in [0,\psi(\sz)]$, by the chain rule  we have
 $1=\psi'\circ \psi^{-1} (\psi^{-1})' $ almost everywhere in $[0,\psi(\sz)]$.
Denote by $E$  the set where $\psi$ is differentiable and $\psi'>0$.
Since $|[0,\sz]\setminus  E|=0$, by the absolute continuity of $\psi$,  we have
$|[0,\psi(\sz)]\setminus \psi(E)|=|\psi ([0,\sz]\setminus  E)|=0.$  Thus  $\psi'\circ \psi^{-1}>0$ in $ \psi (  E)$, and hence, almost everywhere
in $[0,\psi(\sz)]$.  Recalling \eqref{e.xx3}, we obtain
  $$(\psi^{-1})'(t) =\frac1{\psi'\circ \psi^{-1}(t)}= \frac{\sqrt{2 F\circ\gz\circ \psi^{-1}(t)+2\lz}}{|\dot\gz\circ \psi^{-1}(t)|}\quad\mbox{for almost all $t\in[0,\psi(\sz)]$},$$
and hence
\begin{equation*}
|\dot \eta(t)|
  ={\sqrt{2 F\circ\gz(\psi^{-1}(t))+2\lz}}={\sqrt{2 F\circ\eta( t )+2\lz}} \quad\mbox{for almost all $t\in[0,\psi(\sz)]$}.
\end{equation*}}
Thus  $$[\frac12|\dot\eta(t)|^2+F\circ\eta(t)+\lz]= |\dot \eta(t)|\sqrt{2 F\circ\eta(t)+2\lz} \quad\mbox{for almost all $t\in[0,\psi(\sz)]$}.$$
We therefore obtain that
$$A_\lz(\eta)=\wz A_\lz(\eta)=\wz A_\lz(\gz)$$
as desired.
\end{proof}

We are now in a position to show

\begin{proof} [Proof of Lemma \ref{key00}.]
Note that for any $\gz \in \mathcal {AC}(x,y;[0,\sz];\rr^{dN})$,
by Cauchy-Schwartz inequality,  one has
$$ \frac 12  |\dot \gz (s)|^2+ F(\gz(s))+\lz
\ge 2\sqrt {F(\gz(s))+\lz} \sqrt{\frac 12  |\dot \gz (s)|^2} =
\sqrt{2(F(\gz(s))+\lz) }     |\dot \gz (s) |,$$
which gives
$A_\lz(\gz)\ge \wz A_\lz(\gz).$  Thus $m_\lz(x,y)\ge \wz m_\lz(x,y).$

We prove by contradiction that $|\dot \gz(t)|=\sqrt{2(F\circ \gz(t) +\lz)}  \,\, \mbox{almost everywhere.}$
 Suppose that this   is not correct.
Write $$E=\{ s\in[0,\tau] : |\dot \gz (s)|\ne \sqrt{2(F(\gz(s))+\lz)}, F(\gz(s))<\fz   \}.$$
Then $|E|>0$.
Moreover for   $s\in E$, one has
$$ \frac 12  |\dot \gz (s)|^2+ F(\gz(s))+\lz
> 2\sqrt {F(\gz(s))+\lz} \sqrt{\frac 12  |\dot \gz (s)|^2} =
\sqrt{2(F(\gz(s))+\lz) }     |\dot \gz (s) |.$$
Thus
$$m_\lz(x,y)=A_\lz(\gz)>\wz A_\lz(\gz).$$
Reparameterize $\gz$ to get $\eta$ as in Lemma A.5 we know that
$$A_\lz(\eta)=\wz A_\lz(\eta) =\wz A_\lz(\gz).$$
Note that $m_\lz(x,y)\le A_\lz(\eta)$. This is a contradiction.

It is easy to see that  $m_\lz(x,y)\ge \wz m_\lz(x,y).$
We prove  $ \wz m_\lz(x,y)= m_\lz(x,y)  $ by contradiction. Suppose this {\color{black}is not correct}.
Then  $ \wz m_\lz(x,y)< m_\lz(x,y)  $.
 There exists a curve {\color{black}$\gz$} such that
 $$\wz m_\lz(x,y)\le
\wz A_\lz(\gz)< m_\lz(x,y).$$
{\color{black} Reparameterizing $\gz$ like in Lemma \ref{A5}, we find a curve $\eta$ such that}
$$A_\lz(\eta)=\wz A_\lz(\eta)=\wz A_\lz(\gz).$$
Ssince  $m_\lz(x,y)\le A_\lz(\eta)$, this is a contradiction.
\end{proof}

\begin{proof} [Proof of Lemma \ref{A6}] We only need to prove  Lemma \ref{A6} for
 $m_\lz$-geodesics    with canonical parameter.
 Assume that $\gz\in \mathcal(x,y;[0,\sz];\wz\Omega)$ is an
 $m_\lz$-geodesic    with canonical parameter joining  $x,y$, and moreover   $\gz|_{(0,\sz)}\subset\Omega $.
Up to considering $\gz_{[\dz,\sz-\dz]} \subset\Omega$ for any sufficiently small $\dz>0$,
we may assume that $\gz\subset\Omega$. {\color{black} Recall that
  $$ m_\lz(x,y)=A_\lz(\gz)\le  A_\lz(\gz+\ez\xi),\quad\forall \xi\in  C^1(0,0;[0,\sz];\rr^{dN}).$$
 Since $\gz\subset\Omega$, there exist $ \ez_0$ depending on $\gz$ and $\xi$ such that
$ A_\lz(\gz+\ez\xi)<\fz$  for any $\ez<\ez_0$.
Thus  $$0=\frac{d}{d\ez}|_{\ez=0} A_\lz(\gz+\ez\xi),$$
that is,
$$
0=\frac{d}{d\ez}|_{\ez=0} \int_0^\sz    ( \frac12 |(\dot\gz +\ez \dot\xi)(s)|^2+ F\circ(\gz +\ez\xi)(s) +\lz)  \,ds.$$}
A direct calculation gives
{\color{black}
\begin{equation*}
\frac{d}{d\ez}|_{\ez=0} [ \frac12 |(\dot\gz +\ez \dot\xi)(s)|^2+ F\circ(\gz +\ez\xi)(s) +\lz]=
 \langle \langle \dot\gz  (s), \dot\xi(s) \rangle \rangle +
 \langle \langle \nabla F(\gz (s)), \xi(s) \rangle \rangle.
\end{equation*}}
Therefore we {\color{black}obtain}
 $$\int_0^\sz  {\color{black}[ \langle \langle \dot\gz (s), \dot\xi(s) \rangle \rangle + \langle \langle \nabla F(\gz (s)), \xi(s)\rangle \rangle] }\,ds=0, \quad\forall \xi\in  C^1(0,0,[0,\sz];\rr^{dN}).$$
Note that $\gz\subset\Omega$ is free of collision. { Since $F \in C^2(\Omega)$, it follows from \cite[Chapter 3]{fa} that $\gz \in C^2$ }, and hence,
 $$\int_0^\tau {\color{black} \langle \langle [-\ddot\gz (s)+ \nabla F(\gz (s)) ], \xi(s)\rangle \rangle}\,ds=0.$$
By the {\color{black}arbitrariness} of $\xi$,
we have
 $\ddot\gz (s)=\nabla F(\gz (s)) $
for all $s \in (0,\sz)$ as desired.
\end{proof}

{\color{black}
Finally, we prove   \eqref{ea.x1} in the Step 3 in Remark \ref{eg1}(iii).

\begin{proof}[Proof of   \eqref{ea.x1}]

Let all notations and notions be as in Remark \ref{eg1}(iii).
In particular, recall   $$\mbox{$\Sigma^-_{12} =\{x=(x_1,\cdots,x_N)\in\rr^{dN}: x_1=x_2\in\Gamma\}$ and
$\Gamma= \{x_i \in \rr^{d } :  x_i^j < 0 , \ \forall 1 \le j\le d \}$}.$$
Let $x\in \Sigma_{12}^-$ be an arbitrary point. We have $x_2=x_1\in\Gamma$, that is,
$x_2^j=x_1^j< 0$ for all $1\le j\le d$.
 {\color{black}Set
$$c_\ast(x):= \min\{1,|x_1^1|, \cdots,|x_1^d| \}=\min\{1,-x_1^1 , \cdots,-x_1^d \}.$$}
Then  $c_\ast(x)>0$. To get \eqref{ea.x1},
  that is,  $\inf_{z\in\Omega} m_\lz(z,x)>0$,
obviously it  suffices to prove
\begin{align}\label{ey.a1}  \inf_{z\in\Omega} m_\lz(z,x)   \ge\sqrt{2\lz} \frac12c_\ast(x). 
\end{align}
To see \eqref{ey.a1},  given any $z\in \Omega$, let
 $\gz\in \mathcal {AC}(z,x;[0,\sz];\wz\Omega)$   for   some $\sz>0$    be an  $m_\lz$-geodesic      with canonical parameter  so that
$A_\lz(\gz)=m_\lz(z,x)<\fz$; see  Lemma A.2 for its existence.
From  Lemma A.2 and \eqref{ew.1}   it follows that
\begin{equation}\label{e.xx9-1}m_\lz(z,x)= A_\lz(\gz)  \ge  A_\lz(\gz|_{[s,\sz]})= m_\lz(\gz(s),x)
 \ge \sqrt{ 2\lz} |\gz(s)-x|\quad\forall s\in[0,\sz). \end{equation}
We then claim that
\begin{equation}\label{e.xx9}
 \mbox{there exists an $s\in[0,\sz)$ such that}\ |\gz(s)-x|\ge \frac12c_\ast(x).
 \end{equation}
Obviously,  from \eqref{e.xx9-1} and the claim \eqref{e.xx9} it follows that
 $m_\lz(z,x) \ge \frac12\sqrt{ 2\lz}c_\ast(x).$ Thus  \eqref{ey.a1} holds  as desired.

Below we prove the above claim \eqref{e.xx9} by contradiction.
Assume that   the claim \eqref{e.xx9} is not correct, in other words, assume that
\begin{equation}\label{e.xx9a} |\gz(s)-x|< \frac12 c_\ast(x)\quad \mbox{ for all $ s\in[0,\sz]$.} \end{equation}
Write $\gz=(\gz_1,\cdots,\gz_N)$ and $\gz_i=(\gz^1_i,\cdots,\gz_i^d)$.
Given any $i=1,2$ and  $1\le j\le d$,
by the hypothesis \eqref{e.xx9a} one has  $$|\gz_i^j(s)- x_i^j|\le |\gz(s)-x|< \frac12 c_\ast(x)
\quad\mbox{ and hence }\quad  \gz_i^j(s)\le x_i^j+ \frac12 c_\ast(x).
$$
Noting $0<c_\ast(x)\le -x_i^j$,
we get $$ \gz_i^j(s)\le \frac12 x_i^j<0. $$
Thus,
 $(\gz_1(s),\gz_2(s))\in \Gamma\times \Gamma$ and hence, by the definition of  $F$, we have
$$F\circ \gz= h_-\circ(|\gz_1-\gz_2|).$$
Define an auxiliary function
   $$\mbox{$v(t)=|\gz_1(t)-\gz_2(t)|$ for all $t\in[0,\sz]$}.$$
Then  $F\circ \gz= h_-\circ v.$
{\color{black}
Moreover, note that $v\in C^0([0,\sz])$. By $x_1=x_2$,
 $$v(\sz)=|\gz_1(\sz)-\gz_2(\sz)|=|x_1-x_2|=0;$$
 while by $z\in\Omega$, we have $z_1\ne z_2$
 and hence  $$v(0)=|\gz_1(0)-\gz_2(0)|=|z_1-z_2|>0.$$
Set $$\kz:=\min\{s\in [0,\sz] \ | \ v(s)=0\}.$$
Then $0<\kz\le \sz$.  Below we  will  prove that
\begin{equation}\label{contra}\int_0^{\kz}
|\dot \gz |\sqrt{ h_-\circ v }\,ds=\fz.
\end{equation}
Before  we give the proof of \eqref{contra}, we show that
\eqref{contra} leads to a contradiction.   Indeed,
  Lemma A.2 and Lemma A.3 yield
 $$m_\lz(z,x)\ge m_\lz(z,\gz(\kz) )\ge\wz A_\lz(\gz|_{[0,\kz]}).$$
Write
$$
\wz A_\lz(\gz|_{[0,\kz]})=  \int_0^{\kz} |\dot \gz |\sqrt{2 F\circ \gz+2 \lz }\,ds
=\int_0^{\kz} |\dot \gz |\sqrt{2 h_-\circ v+2 \lz }\,ds\ge \sqrt2\int_0^{\kz}
|\dot \gz |\sqrt{ h_-\circ v  }\,ds.
$$
We see that
\eqref{contra} implies  $\wz A_\lz(\gz|_{[0,\kz]})=\fz$ and hence
 $m_\lz(z,x)=\fz$. However, since
    $z\in\Omega$, we have  $m_\lz(z,x)<\fz$.
 This is a contradiction.
 Therefore, we conclude that the hypothesis
 \eqref{e.xx9a} is not correct, and hence,  the claim \eqref{e.xx9} must hold as desired.}

Finally, we prove
  \eqref{contra} via the following 4 substeps.

\medskip
{\it Substep 1.}
From the definition of $\kz$ it follows that
  $$\mbox{$v(\kz)=0$,  $v(t)>0$ for $t\in[0,\kz)$, $\gz(\kz)\in  \Sigma^{-}_{12}$ and
$\gz|_{[0,\kz)}\subset\rr^{dN}\setminus \Sigma^{-}_{12}$. }$$
Set $$\tau=\sup_{s\in[0,\kz]} v(s).$$
The choice of $\kz$ and the continuity of $v$   give that $v([0,\kz])=[0,\tau]$.
Moreover, we observe that  $\tau\le c_\ast(x)\le1$. Indeed,
by $x_1=x_2$ and the triangle inequality, for any $s\in[0,\kz]$  we have
$$v(s)=|\gz_1 (s)-\gz_2 (s)|\le  |\gz_1 (s)-x_1 (s)| +|x_2 (s)-\gz_2 (s)| \le 2|\gz-x|  $$
and hence, by the hypothesis
\eqref{e.xx9a}, $v(s)\le c_\ast(x)$.

For any $s\in[0,\kz]$, recall that
 \begin{equation*}
  h_-\circ v(s) =\left\{
\begin{split}
&1 \quad& \mbox{if }    v(s)\in \Lambda=\{0\}\cup(\cup_{k\ge3}  [2^{-k^2-1}, 2^{-k^2+1}]); \\
&\frac1{v(s)^2}\ & \mbox{if} \quad  v(s)\in [0,\tau]\setminus \Lambda.
\end{split}\right.
\end{equation*}
Wrtie $E:=\{s\in[0,\kz]|v(s)\in [0,\tau]\setminus\Lambda\}$.Then
 $$h_-(v(s))=\frac1{v(s)^2}\quad \mbox{ for $s\in E$}$$ and
 $$\chi_E(s) = \chi_{[0,\tau]\setminus \Lambda}(v(s))= \chi_{[0,\tau]\setminus \Lambda}\circ v(s)\quad\forall s\in[0,\kz].$$
 Thus
 \begin{equation} \label{contra-2}
\int_0^{\kz}
|\dot \gz |\sqrt{ h_-\circ v  }\,ds\ge   \int_0^\kz \chi_E(s)
|\dot \gz(s) |\frac1{v(s)}\,ds= \int_0^\kz  \chi_{[0,\tau]\setminus \Lambda}\circ v(s)
|\dot \gz (s)| \frac1{v(s)}\,ds.
\end{equation}

\medskip
{\it Substep 2.}  Since both of $\gz_1$ and $\gz_2$ are absolutely continuous in $[0,\kz]$, we know that $v$ is also absolutely continuous in $[0,\kz]$, and hence $\dot v\in L^1([0,\kz])$.
One further has
\begin{equation}\label{e.xx13a}
|\dot v(t)|= \frac{|\langle (\dot \gz_1-\dot\gz_2)(t),   \gz_1(t)- \gz_2 (t)\rangle |}{v(t)}\le
|(\dot \gz_1-\dot\gz_2)(t)|\le 2
  |\dot \gz(t)|   \quad \mbox{  for almost all $t\in [0,\kz]$.}
 \end{equation}
Applying this in \eqref{contra-2},  we obtain
\begin{equation}\label{contra-1}\int_0^{\kz}
|\dot \gz |\sqrt{ h_-\circ v  }\,ds\ge \frac12\int_0^\kz  \chi_{[0,\tau]\setminus \Lambda}\circ v(s)
|\dot v (s)| \frac1{v(s)}\,ds.
\end{equation}

\medskip
{\it Substep 3.} We claim that
 \begin{align}\label{e.xx10}
 \int_{0}^{\kz}  \frac1{v }|\dot v | \chi_{[0,\tau]\setminus\Lambda}\circ v \,dt&\ge   \int_{[0,\tau]\setminus {\Lambda}} \frac1{t}\,dt,
\end{align}
whose proof will be given in Substep 4.
Assume  that the claim \eqref {e.xx10} holds for the moment.  By  \eqref{contra-1}
and the claim \eqref {e.xx10},  we obtain
$$\int_0^{\kz}
|\dot \gz |\sqrt{ h_-\circ v  }\,ds>   \int_{[0,\tau]\setminus {\Lambda}} \frac1{t}\,dt.
$$
 Let $k_\tau \ge 3$ such that $2^{-k_\tau^2+1}\le \tau$.  Then
$$ \int_{[0,\tau]\setminus {\Lambda}} \frac1{t}\,dt\ge  \sum_{k\ge k_\tau} \int_{2^{-k^2+1}}^{2^{-(k-1)^2-1}}\frac1{t}\,dt=   \sum_{k\ge k_\tau} \ln \frac{2^{-(k-1)^2-1}}{2^{-k^2+1}}
\ge   \sum_{k\ge k_\tau} \ln  2^{2k-3}  \ge
 (\ln2 )\sum_{k\ge k_\tau} k=\fz.
  $$
     Thus $$ \int_0^{\kz}
|\dot \gz |\sqrt{ h_-\circ v  }\,ds=\fz,$$
 which gives \eqref{contra}.

\medskip
{\it Substep 4.} We prove the claim \eqref{e.xx10} via Lemma \ref{changeofvariable}(i).
Since $v$ is only  defined in $[0,\sz]$ and also {\color{black} not necessarily Lipschitz} in $ [0,\kz]$, we can not use
Lemma \ref{changeofvariable}(i)  directly.
{\color{black}
To overcome this difficulty, we show that the restriction of $v$ in  subintervals  $[0,{\kz}-\ez]$ is Lipschitz for all sufficiently small $\ez>0$, and extend them to $\rr$ via the McShane's extension.
To be precise, by  Lemma A.3 and $F\circ \gz=h_-\circ v$, one has
$$ |\dot \gz |= \sqrt{2(F\circ \gz  +\lz)} =  \sqrt{2(h_-\circ v+\lz)}
 \quad \text{   almost  everywhere in } [0,\kz] .$$
 Recall that $0<v  \le  1$ in $[0,\kz)$, $\lz>0$, and $h_-(t)\le\frac1{t^2}$ by definition, we have
 \begin{equation*}
 h_-\circ v+\lz  \le  (1+\lz) \frac1{v^2}\quad \mbox{     in $[0,\kz]$.}
 \end{equation*}
Thus, by  \eqref{e.xx13a},
 $$|\dot v | \le  2|\dot\gz|\le 2\sqrt{2(1+\lz) }\frac1{v } \quad \mbox{  almost everywhere   in $[0, \kz]$.}$$
   For $0<\ez<{\kz} $, set
$$\dz_\ez:=\min\{v(t),t\in[0,{\kz} -\ez]\}.$$
Since $v$ is continuous in $[0,{\kz} ]$ and $v>0$ in $[0,\kz)$,
we know that $\dz_\ez>0$.  Thus
  $$|\dot v|\le 2\sqrt{2 (1+\lz) } \frac1{\dz_\ez }
 \mbox{\ almost everywhere in $[0,\kz-\ez]$.}
$$
From this and the absolute continuity of $v$ in $[0,\kz]$,
we know that $v$ is Lipschitz in $[0,\kz-\ez]$ with
$$|v(s)-v(t)|=|\int_s^t\dot v(\xi)\,d\xi|\le 2\sqrt{2 (1+\lz) } \frac1{\dz_\ez }|s-t|,\quad \forall s,t\in[0,\kz-\ez].$$
  Denote by $\wz {v^\ez}$ the McShane's extension of $v|_{[0,\kz-\ez]}$ into $\rr$, that is,
\begin{equation}\label{McSh}\wz {v^\ez}(t)=\inf \left\{v (s)+2\sqrt{2 (1+\lz) } \frac1{\dz_\ez }|t-s| \ \bigg| \ s\in[0,\kz-\ez]
\right\}, \quad\forall t\in\rr.
\end{equation}
Then
$$|\wz {v^\ez}(s)-\wz {v^\ez}(t)|\le 2\sqrt{2 (1+\lz) } \frac1{\dz_\ez }|s-t|,\quad\forall s,t\in\rr,$$
that is, $\wz {v^\ez}$ is Lipschitz in $\rr$, and moreover,
$\wz {v^\ez}|_{[0,\kz-\ez]}=v|_{[0,\kz-\ez]}$;
see for example [23,Chapter 6].

Moreover,
write \begin{align*}\int_{0}^{\kz}   \frac1{v }|\dot v | \chi_{[0,\tau]\setminus\Lambda}\circ v \,dt=  \lim_{\ez\to0} \int_{0}^{{\kz} -\ez} \frac1{v }|\dot v |\chi_{[0,\tau]\setminus {\Lambda}}\circ v \,ds.
\end{align*}
Since $v$ is continuous in $[0,{\kz} ]$, $v>0$ in $[0,\sz)$ and $\vz({\kz} )=0$,
we know that   $\dz_\ez\to0$ as $\ez\to0$.
When $\ez>0$ is sufficiently small, one also has
$v([0,\kz-\ez])=[\dz_\ez,\tau]$, that is,
$$\chi_{[0,\tau]\setminus {\Lambda}}\circ v(s)= \chi_{[\dz_\ez,\tau]\setminus {\Lambda}}\circ v
(s)\quad\quad \mbox{ for any } s\in[0,{\kz} -\ez].$$
Therefore, for all sufficiently small $\ez>0$, one has
\begin{align*}
 \int_{0}^{{\kz} -\ez} \frac1{v }|\dot v |\chi_{[0,\tau]\setminus {\Lambda}}\circ v \,ds=\int_{0}^{{\kz} -\ez} \frac1{v }|\dot v |\chi_{[\dz_\ez,\tau]\setminus {\Lambda}}\circ v \,ds.
\end{align*}
Denote by $\wz v^\ez$   the McShane's extension of $v|_{[0,\kz-\ez]}$ into $\rr$ as in
\eqref{McSh}.
Since $\wz {v^\ez}|_{[0,\kz-\ez]}=v|_{[0,\kz-\ez]}$,
   it follows that
\begin{align*}
 \int_{0}^{{\kz} -\ez} \frac1{v }|\dot v |\chi_{[0,\tau]\setminus {\Lambda}}\circ v \,ds=  \int_\rr \chi_{[0,\kz-\ez]}(s)\chi_{[\dz_\ez,\tau]\setminus {\Lambda}}\circ \wz {v^\ez} (s) \frac1{ \wz {v^\ez} (s)}|(\wz  {v^\ez})'(s)|\,ds.
\end{align*}
Since  $\wz {v^\ez}$ is Lipschitz in $\rr$ and
 $ \chi_{[0,\kz-\ez]}\frac{1}{ \wz {v^\ez} }\chi_{[\dz_\ez,\tau]\setminus {\Lambda}}\circ  \wz {v^\ez}  \in L^1(\rr)$,
we are able to apply  the change of variable formula in Lemma \ref{changeofvariable}(i) with   $f=\wz {v^\ez}$ in $\rr$  and   $g=\chi_{[0,\kz-\ez]}\frac{1}{ \wz {v^\ez} }\chi_{[\dz_\ez,\tau]\setminus {\Lambda}}\circ  \wz {v^\ez}$ therein to get
\begin{align}\label{A9}\int_{0}^{{\kz} -\ez} \frac1{v }|\dot v |\chi_{[0,\tau]\setminus {\Lambda}}\circ v \,ds=\int_{\rr}\sum_{s\in (\wz  {v^\ez})^{-1}(\{ t\})} \left[\chi_{[0,\kz-\ez]}(s)\chi_{[\dz_\ez,\tau]\setminus {\Lambda}}\circ \wz {v^\ez} (s) \frac1{\wz {v^\ez}(s)}\right] \,dt.
\end{align} }
For any sufficiently small $\ez>0$,
 since $v([0,\kz-\ez])=[\dz_\ez,\tau]$,  given any $ t\in [\dz_\ez,\tau]\setminus \Lambda$,  one can find at least one $s\in[0,\kz-\ez]$
such that $\wz v^\ez(s)= v(s)=t$, and hence,
{\color{black}
$$\sum_{s\in (\wz  {v^\ez})^{-1}(\{ t\})} \left[\chi_{[0,\kz-\ez]}(s)\chi_{[\dz_\ez,\tau]\setminus {\Lambda}}\circ \wz {v^\ez}(s) \frac1{ \wz {v^\ez}(s)}\right]\ge \frac1t.$$}
From this and \eqref{A9} one deduces that
$$
\int_{0}^{{\kz} -\ez} \frac1{v }|\dot v |\chi_{[0,\tau]\setminus {\Lambda}}\circ v \,ds
\ge\int_{[\dz_\ez,\tau]\setminus\Lambda}\frac1t\,dt.$$
Sending
$\ez\to0$, by $\dz_\ez\to0$   one gets
\begin{align*}\int_{0}^{\kz}   \frac1{v }|\dot v | \chi_{[0,\tau]\setminus\Lambda}\circ v \,dt
&
\ge     \int_{[0,\tau]\setminus {\Lambda}} \frac1{t}\,dt,
\end{align*}
which gives the claim \eqref{e.xx10}
as desired. The proof is complete.
\end{proof}

\medskip

\noindent {\bf Data Availability Statement.} All data, models, and code generated or used during the study appear in the submitted article.

\medskip

\noindent Jiayin Liu

\noindent
School of Mathematical Science, Beihang University, Changping District Shahe Higher Education Park
  South Third Street No. 9, Beijing 102206, P. R. China

{\it and}

\noindent Department of Mathematics and Statistics, University of Jyv\"{a}skyl\"{a}, Jyv\"{a}skyl\"{a},  Finland

\noindent{\it E-mail }:  \texttt{jiayin.mat.liu@jyu.fi}

\bigskip
\noindent Duokui Yan

\noindent School of Mathematical Science, Beihang University, Changping District Shahe Higher Education Park
  South Third Street No. 9, Beijing 102206, P. R. China

\noindent {\it E-mail }:
\texttt{duokuiyan@buaa.edu.cn}

\bigskip
\noindent Yuan Zhou

\noindent School of Mathematical Sciences, Beijing Normal University, Haidian District Xinejikou Waidajie No.19, Beijing 100875, P. R. China

\noindent {\it E-mail }:
\texttt{yuan.zhou@bnu.edu.cn}


\begin{thebibliography}{99}


\vspace{-0.3cm}
\bibitem{aps14}
J.A. Arredondo, E. Perez-Chavela and C. Stoica, {\em Dynamics in the Schwarzschild Isosceles Three Body Problem}, J Nonlinear Sci. 24 (2014), 997-1032.

 \vspace{-0.3cm}
\bibitem{ba106}
 T. Banachiewitz, {\em Sur un cas particulier du probl\`{e}me des trois corps}, C. R. Acad. Sci. Paris 142 (1906), 510-512.

\vspace{-0.3cm}
\bibitem{br91}
A. Bahri, P. H. Rabinowitz, {\em Periodic solutions of Hamiltonian systems of 3-body type}, Ann.
Inst. Henri Poincar\'{e} Anal. Non Lin\'{e}aire 8 (1991) 561-649.

\vspace{-0.3cm}
\bibitem{bft08}
V. Barutello, D. L. Ferrario and S. Terracini, {\em On the singularities of generalized solutions to n-body-type problems}, Int. Math. Res. Not.  2008 (2008), rnn 069, 78 pp.

\vspace{-0.3cm}
\bibitem{bm20}
J. Burgos, E. Maderna, {\em Geodesic rays of the N-body problem}, arXiv:2002.06153v3.

\vspace{-0.3cm}
\bibitem{bmps}
M. Barbosu, V. Mioc, D. Pasca and F. Szenkovits, {\em The two-body problem with generalized
Lennard-Jones potential}, J. Math. Chem. 49 (2011), 1961-1975.

\vspace{-0.3cm}
\bibitem{btv13}
 V. Barutello, S. Terracini and G. Verzini, {\em Entire minimal parabolic trajectories: the planar anisotropic Kepler problem}, Arch. Ration. Mech. Anal. 207 (2013), 583-609.

 \vspace{-0.3cm}
\bibitem{btv14}
 V. Barutello, S. Terracini, and G. Verzini, {\em Entire parabolic trajectories as minimal phase transitions}, Calc. Var. Partial Differential Equations 49 (2014), 391-429.

  \vspace{-0.3cm}
\bibitem{b97}
 D. S. Bridges, {\em Foundations of Real and Abstract Analysis}, vol. 174 of Graduate Texts in Mathematics. Springer-Verlag, 1997.


 \vspace{-0.3cm}
\bibitem{ch02}
A. Chenciner, {\em Action minimizing solutions in the Newtonian n-body problem: from homology to symmetry}, Proceedings of the International Congress of Mathematicians (Beijing, 2002), Higher Ed. Press, Beijing, (2002), 279--294.

\vspace{-0.3cm}
 \bibitem{ch08}
 K. Chen, {\em Existence and minimizing properties of retrograde orbits to the three-body problem with various choices of masses}, Ann. of Math. 167 (2008), 325--348.


\vspace{-0.3cm}
\bibitem{c18}
J. Chazy, {\em Sur certaines trajectories du probl\'{e}me des n corps}, Bull. Astronom. 35 (1918), 321-389.

\vspace{-0.3cm}
\bibitem{c22}
J. Chazy, {\em Sur l'allure du mouvement dans le probl\`{e}me des trois corps quand
le temps cro\^{\i}t ind\'{e}finiment,}  Ann. Sci. \'{E}cole Norm. Sup. 39 (1922), 29-130.

\vspace{-0.3cm}
\bibitem{ch198}
  A. Chenciner, {\em Collisions totales, mouvements compl\`{e}tement paraboliques et r\'{e}duction des homoth\'{e}ties dans le probl\'{e}me des n corps}, Regul. Chaotic Dyn. 3 (1998), 93-106.


\vspace{-0.3cm}
\bibitem{cm00}
A. Chenciner, R. Montgomery, {\em A remarkable periodic solution of the three-body problem in the case of equal masses}, Ann. of Math. 152 (2000), 881-901.

\vspace{-0.3cm}
\bibitem{di193}
 F. Diacu, {\em The planar Isosceles Problem for Maneff's Gravitational Law}, J.Math.Phys. 34 (1993), 5671-5690.

\vspace{-0.3cm}
\bibitem{dmmy}
N. Duignan, R. Moeckel, R. Montgomery and G. Yu {\em Chazy-Type Asymptotics and Hyperbolic
Scattering for the $n$-Body Problem}, Arch. Rational Mech. Anal. 238 (2020), 255-297.

\vspace{-0.3cm}
\bibitem{eg92}
L.C. Evans, R.F. Gariepy, {\em Measure theory and fine properties of functions}, Studies in Advanced Mathematics. CRC Press, Boca Raton, FL, 1992.



\vspace{-0.3cm}
\bibitem{fa}
A. Fathi, {\em Weak Kam Theorem in Lagrangian Dynamics}, Cambridge University Press (to appear), 2021.

\vspace{-0.3cm}
\bibitem{fkm21}
J. F\'{e}joz, A. Knauf and R. Montgomery, {\em Classical n-body scattering with long-range potentials}, Nonlinearity 34 (2021), 8017-8054.


\vspace{-0.3cm}
\bibitem{ft04}
D. L. Ferrario, S. Terracini, {\em On the existence of collisionless equivariant minimizers for the classical n-body problem}, Invent. Math. 155 (2004), 305-362.

\vspace{-0.3cm}
\bibitem{gs17}
H. Gingold, D. Solomon, {\em Celestial mechanics solutions that escape}, J. Differ. Equ.
263 (2017), 1813-1842.

\vspace{-0.3cm}
\bibitem{he01}
J. Heinonen, {\em Lectures on analysis on metric spaces.} Springer Science \& Business Media, 2001.


\vspace{-0.3cm}
\bibitem{lj131}
J. E. Lennard-Jones, {\em Cohesion}, Proc. Phys. Soc. 43 (1931), 461-482.
\vspace{-0.3cm}
\bibitem{ll15}
J. Llibre, Y. Long,
{\em Periodic solutions for the generalized anisotropic Lennard-Jones Hamiltonian}, Qual. Theory Dyn. Syst. 14 (2015), 291-311.

\vspace{-0.3cm}
\bibitem{lm12}
L. Llibre, A. Makhlouf, {\em Periodic orbits of the spatial anisotropic Manev problem}, J. Math. Phys. 53 (2012), 122903.

%

\vspace{-0.3cm}
\bibitem{m12}
E. Maderna, {\em On weak KAM theory for N-body problems}, Ergodic Theory
Dynam. Systems 32 (2012), 1019-1041.
\vspace{-0.3cm}
\bibitem{mv20}
E. Maderna, A. Venturelli, {\em Viscosity solutions and hyperbolic motion:
A new PDE method for the N-body problem},  Ann. of
Math. 192 (2020), 499-550.


\vspace{-0.3cm}
\bibitem{m02}
C. Marchal, {\em How the method of minimization of action avoids singularities},
Celestial Mech. Dynam. Astronom. 83 (2002), 325-353, Modern celestial
mechanics: from theory to applications (Rome, 2001).

  \vspace{-0.3cm}
\bibitem{ma124}
G. Maneff, {\em La gravitation et le principe de l'egalite de l'action et de la reaction},
Comptes Rendus Acad. Sci. Paris 178 (1924), 2159-2161.


\vspace{-0.3cm}
\bibitem{ms76}
C. Marchal, D. G. Saari, {\em On the final evolution of the $n$-body problem}, J. Differ. Equ. 20 (1976), 150-186.


\vspace{-0.3cm}
\bibitem{mo98}
R. Montgomery, {\em The N-body problem, the braid group, and action-minimizing periodic
solutions} Nonlinearity 11 (1998) 363-376.

\vspace{-0.3cm}
\bibitem{mo18}
R. Montgomery, {\em Metric cones, N-body collisions, and Marchal's lemma}, arXiv: 1804.03059.

\vspace{-0.3cm}
\bibitem{mt177}
 J.P. M\"{u}cket, H.-J. Treder, {\em The perihelion advance according to a post-Newtonian gravitational law with logarithmic correction term}, Astron. Nachr. 298 (1977), 65-67.

\vspace{-0.3cm}
\bibitem{pol67}
 H. Pollard, { \em The behavior of gravitational systems}, J. Math. Mech. 17 (1967), 601-611.

 \vspace{-0.3cm}
\bibitem{pol76}
 H. Pollard, { \em Celestial Mechanics}, Carus Math. Monogr., vol.18, Mathematical Association of America, 1976.

 \vspace{-0.3cm}
\bibitem{posa70}
H. Pollard, D. Saari, { \em Escape from a gravitational system of positive energy}, Celestial Mech. 1 (1970), 347-350.

 \vspace{-0.3cm}
\bibitem{pp20}
E. Popescu, D. Pricopi, {\em  Orbits stability under the influence of M\"{u}cket-Treder potential}, Astrophys Space Sci. 365 (2020), 191.

 \vspace{-0.3cm}
\bibitem{sa71}
 D. Saari, { \em Expanding gravitational systems}, Trans. Amer. Math. Soc. 156 (1971), 219-240.




\vspace{-0.3cm}
\bibitem{se1895}
H. Seeliger, {\em Ueber das Newton'sche Gravitationsgesetz}, Astron. Nachr. 137 (1895), 129-136.

\vspace{-0.3cm}
\bibitem{vn18}
 J. C. del Valle, D. J. Nader, {\em Toward the theory of the Yukawa potential},  J. Math. Phys. 59 (2018), 102103.

 \vspace{-0.3cm}
\bibitem{w138}
 A. Wintner, {\em Galilei group and law of gravitation}, Amer. J. Math. 60 (1938), 473-476.

 \vspace{-0.3cm}
\bibitem{wa91}
Q. Wang, {\em The global solution of the N-body problem}, Celestial Mech. Dynam. Astronom. 50 (1991) 73-88.


\vspace{-0.3cm}
\bibitem{z08}
H. von Zeipel, { \em Sur les singularit\'{e}s du probl\'{e}me des $n$ corps}, Ark. Math. Astr.
Fys. 4 (1908), 1-4.

\end{thebibliography}
\end{document}